\long\def\@makecaption#1#2{%
  \vskip\abovecaptionskip
  \sbox\@tempboxa{#1. #2}%
  \ifdim \wd\@tempboxa >\hsize
    #1. #2\par
  \else
    \global \@minipagefalse
    \hb@xt@\hsize{\hfil\box\@tempboxa\hfil}%
  \fi
  \vskip\belowcaptionskip}
\theoremstyle{definition}
\newtheorem{lemma}{Lemma}[section]
\newtheorem{theorem}{Theorem}[section]
\newtheorem{remark}{Remark}[section]
\newtheorem{definition}{Definition}[section]
\newtheorem{example}{Example}[section]
\numberwithin{equation}{section}
\newcommand{\divv}{\mathrm{d}}
\newcommand{\diff}{\,\divv}
\newcommand{\imnum}{\mathrm{i}\,}
\renewcommand{\Re}{\operatorname{Re}}
\renewcommand{\Im}{\operatorname{Im}}
\newcommand{\Si}{\operatorname{Si}}
\newcommand{\EInt}{\operatorname{E}_1}
\renewcommand{\pi}{\uppi\,\!}
\DeclareMathSymbol{\round}{\mathord}{letters}{"40}
\renewcommand{\partial}{\textrm{\Pisymbol{psy}{"B6}}}
\DeclareMathOperator{\Order}{O}
\DeclareMathOperator{\rme}{e}
\DeclareMathOperator{\arcsinh}{arcsinh}
\newcommand{\domD}{\mathscr{D}}
\newcommand{\LC}{\mathbf{L}}
\newcommand{\textSE}[1]{\text{\tiny{\rm{SE#1}}}}
\newcommand{\textDE}[1]{\text{\tiny{\rm{DE#1}}}}
\newcommand{\textSEg}{\text{\tiny{\rm{SE}}}}
\newcommand{\textDEg}{\text{\tiny{\rm{DE}}}}
\newcommand{\SEt}[1]{\psi_{\textSE{#1}}}
\newcommand{\DEt}[1]{\psi_{\textDE{#1}}}
\newcommand{\SEtInv}[1]{\SEt{#1}^{-1}}
\newcommand{\DEtInv}[1]{\DEt{#1}^{-1}}
\newcommand{\SEtDiv}[1]{\SEt{#1}'}
\newcommand{\DEtDiv}[1]{\DEt{#1}'}
\title{\textbf{Error Estimates with Explicit Constants for Sinc Quadrature and
Sinc Indefinite Integration over Infinite Intervals}\footnote{This work was supported by the Grant-in-Aid for Young Scientists (B) (No. 24760060), JSPS, Japan.}}
\author{Tomoaki Okayama \\
Graduate School of Economics, Hitotsubashi University\\
\texttt{tokayama@econ.hit-u.ac.jp}
}
\date{\today}
\begin{document}
\maketitle
\begin{abstract}
The Sinc quadrature and the Sinc indefinite integration
are approximation formulas
for definite integration and indefinite integration, respectively,
which can be applied on any interval
by using an appropriate variable transformation.
Their convergence rates have been analyzed for typical cases
including finite, semi-infinite, and infinite intervals.
In addition, for verified automatic integration,
more explicit error bounds that are computable
have been recently given on a finite interval.
In this paper, such explicit error bounds are given
in the remaining cases on semi-infinite and infinite intervals.
\end{abstract}


\section{Introduction} \label{sec:introduction}

The ``Sinc quadrature'' is an approximation
formula for the integral over the whole real line, expressed as
\begin{equation}
\int_{-\infty}^{\infty}F(x)\diff x
\approx h\sum_{k=-M}^{N} F(kh),
\label{approx:Sinc-quad}
\end{equation}
where $M,\,N,\,h$ are selected appropriately depending on $n$.
This approximation is also called the (truncated) ``trapezoidal formula.''
It is well known that
the formula~\eqref{approx:Sinc-quad} can achieve quite a fast,
\emph{exponential} convergence.
Furthermore, its optimality is proved
under a certain class of
functions~\cite{stenger78:_optim,sugihara97:_optim}.
Here, there are two important conditions to be satisfied:
(i) the interval of integration is $(-\infty,\,\infty)$,
and (ii) $|F(x)|$ decays exponentially as $x\to\pm \infty$.
%
%
%
In other cases,
an appropriate variable transformation
$t=\psi(x)$
should be employed, i.e., the given integral should be transformed as
$\int_{a}^{b} f(t)\diff t=\int_{-\infty}^{\infty}f(\psi(x))\psi'(x)\diff x$,
so that the above two conditions are met.

In this regard,
Stenger~\cite{stenger93:_numer} considered the following four typical cases:
\begin{enumerate}
 \item $(a,\,b)=(-\infty,\,\infty)$, and $|f(x)|$ decays algebraically
as $x\to \pm\infty$,\vspace*{-0.3\baselineskip}
 \item $(a,\,b)=(0,\,\infty)$, and $|f(x)|$ decays algebraically
as $x\to \infty$,\vspace*{-0.3\baselineskip}
 \item $(a,\,b)=(0,\,\infty)$, and $|f(x)|$ decays (already) exponentially
as $x\to \infty$,\vspace*{-0.3\baselineskip}
 \item The interval $(a,\,b)$ is finite.
\end{enumerate}
He then gave the concrete transformations to be employed in all cases:
\begin{align*}
\SEt{1}(t)&=\sinh t,\\
\SEt{2}(t)&=\mathrm{e}^t,\\
\SEt{3}(t)&=\arcsinh(\mathrm{e}^t),\\
\SEt{4}(t)&=\frac{b-a}{2}\tanh\left(\frac{t}{2}\right)+\frac{b+a}{2},
\end{align*}
which are called the ``Single-Exponential (SE) transformations.''
Takahasi--Mori~\cite{takahasi74:_doubl}
proposed the following improved transformations:
\begin{align*}
\DEt{1}(t)&=\sinh[(\pi/2)\sinh t],\\
\DEt{2}(t)&=\mathrm{e}^{(\pi/2)\sinh t},\\
\DEt{3$\dagger$}(t)&=\mathrm{e}^{t-\exp(-t)},\\
\DEt{4}(t)&=\frac{b-a}{2}\tanh\left(\frac{\pi}{2}\sinh t\right)+\frac{b+a}{2},
\end{align*}
which are called the ``Double-Exponential (DE) transformations.''
In addition,
in case~3, another DE transformation
\begin{equation*}
\DEt{3}(t)=\log(1+\rme^{(\pi/2)\sinh t})
\end{equation*}
was proposed~\cite{muhammad03:_doubl}
so that its inverse function can be explicitly written
with elementary functions
(whereas $\DEt{3$\dagger$}(t)$ cannot).

Error analyses of the Sinc quadrature
combined with $\SEt{1}(t)$,\,\ldots,\,$\SEt{4}(t)$
are given~\cite{stenger93:_numer}
in the following form:
\begin{align}
 |\text{Error}|&\leq C \rme^{-\sqrt{2 \pi d \mu n}},\nonumber
\intertext{and for $\DEt{1}(t),\,\DEt{2}(t),\,\DEt{3}(t),\,\DEt{4}(t)$,
their error analyses have been given~\cite{tanaka09:_deint} as
}
 |\text{Error}|&\leq C \rme^{-2 \pi d n/\log(8 d n/\mu)},
\label{leq:DE-Sinc-quad-error}
\intertext{and for $\DEt{3$\dagger$}(t)$, also given~\cite{tanaka09:_deint} as}
 |\text{Error}|&\leq C \rme^{-2 \pi d n/\log(2\pi d n/\mu)},
\label{leq:DE3-Sinc-quad-error}
\end{align}
where $\mu$ indicates the decay rate of the integrand,
$d$ denotes the width of the domain in which
the transformed integrand is analytic,
and $C$ is a constant independent of $n$.
In view of the inequalities above,
the accuracy of the approximation
can be guaranteed if the constant $C$ is explicitly given
in a computable form.
In fact, the explicit form of $C$ was revealed
in case~4 (the interval is finite)~\cite{okayama09:_error},
and the result was used for
verified automatic integration~\cite{yamanaka10:_fast}.

The main objective of this study is
to reveal the explicit form of $C$'s
in the remaining cases: 1--3 (the interval is not finite),
which enables us to bound the errors by computable terms.

As a second objective, this paper improves
the DE transformation in case~3.
Instead of $\DEt{3}(t)$ or $\DEt{3$\dagger$}(t)$,
\begin{equation*}
\DEt{3$\ddagger$}(t)=\log(1 + \rme^{\pi\sinh t})
\end{equation*}
is introduced in this paper, and it is shown that
the error is estimated as
\begin{equation}
 |\text{Error}|\leq C \rme^{-2 \pi d n/\log(4 d n/\mu)},
\label{leq:DE3ddagger-Sinc-quad-error}
\end{equation}
while clarifying the constant $C$.
The rate of~\eqref{leq:DE3ddagger-Sinc-quad-error}
is better than~\eqref{leq:DE-Sinc-quad-error}
and~\eqref{leq:DE3-Sinc-quad-error}.
Furthermore,
by using Sugihara's nonexistence theorem~\cite{sugihara97:_optim},
it can be shown that
$\DEt{3$\ddagger$}$ is the best
among the possible variable transformations in case~3
(although the point is not discussed in this paper
and left for another occasion).

In addition to the ``Sinc quadrature'' described above,
similar results can be given for
the ``Sinc indefinite integration''
for indefinite integrals $\int_a^{\xi}f(t)\diff t$,
which is also examined in this paper.

The remainder of this paper is organized as follows.
The main results
of this paper
are stated in Sections~\ref{sec:Sinc-quad-estimates}
and~\ref{sec:Sinc-indef-estimates};
new error estimates
for the Sinc quadrature are presented
in Section~\ref{sec:Sinc-quad-estimates},
and for the Sinc indefinite integration
in Section~\ref{sec:Sinc-indef-estimates}.
Numerical examples are shown in Section~\ref{sec:numer-exam}.
All proofs of the presented theorems
are given in Section~\ref{sec:proofs}.


\section{Error Estimates with Explicit Constants for the Sinc Quadrature}
\label{sec:Sinc-quad-estimates}

In this section,
after reviewing existing results,
new error estimates for the Sinc quadrature are stated.
Let us introduce some notation here.
Let $\domD_d$ be a strip domain
defined by $\domD_d=\{\zeta\in\mathbb{C}:|\Im \zeta|< d\}$
for $d>0$.
Furthermore, let
$\domD_d^{-}=\{\zeta\in\domD_d :\Re\zeta< 0\}$
and
$\domD_d^{+}=\{\zeta\in\domD_d :\Re\zeta\geq 0\}$.
In all the theorems presented in Sections~\ref{sec:Sinc-quad-estimates}
and~\ref{sec:Sinc-indef-estimates},
$d$ is assumed to be a positive constant with $d<\pi/2$.
For a variable transformation $\psi$,
$\psi(\domD_d)$ denotes the image of $\domD_d$ by the map $\psi$,
i.e.,
$\psi(\domD_d)=\left\{z=\psi(\zeta): \zeta\in\domD_d \right\}$.
%
Let $I_1=(-\infty,\,\infty)$, $I_2=I_3=(0,\,\infty)$, and
let us define the following three functions:
\begin{align*}
E_1(z;\gamma)&=\frac{1}{(1+z^2)^{(\gamma+1)/2}},\\
E_2(z;\alpha,\beta)&=\frac{z^{\alpha-1}}{(1+z^2)^{(\alpha+\beta)/2}},\\
E_3(z;\alpha,\beta)&=\left(\frac{z}{1+z}\right)^{\alpha-1}\rme^{-\beta z}.
\end{align*}
We write $E_i(z;\gamma,\gamma)$ as $E_i(z;\gamma)$ for short.

\subsection{Existing and New Error Estimates for the Sinc Quadrature with the SE Transformation}
\label{subsec:main-SE-Sinc-Quad}

Existing error analyses for the Sinc quadrature
with $\SEt{1}$, $\SEt{2}$, and $\SEt{3}$ are written in the following form
(Theorems~\ref{thm:SE1-Sinc-Quad} and~\ref{thm:SE2-Sinc-Quad}).

\begin{theorem}[Stenger~{\cite[Theorem~4.2.6]{stenger93:_numer}}]
\label{thm:SE1-Sinc-Quad}
Assume that $f$ is analytic in $\SEt{1}(\domD_d)$, and
there exist positive constants $K$, $\alpha$, and $\beta$ such that
\begin{align}
 |f(z)|&\leq K |E_1(z;\alpha)|
\label{leq:Sinc-quad-case1-alpha}
\intertext{for all $z\in\SEt{1}(\domD_d^{-})$, and}
 |f(z)|&\leq K |E_1(z;\beta)|
\label{leq:Sinc-quad-case1-beta}
\end{align}
for all $z\in\SEt{1}(\domD_d^{+})$.
Let $\mu=\min\{\alpha,\,\beta\}$,
let $h$ be defined as
\begin{equation}
h=\sqrt{\frac{2\pi d}{\mu n}},
\label{eq:Def-SE-h}
\end{equation}
and let $M$ and $N$ be defined as
\begin{equation}
\begin{cases}
M=n,\quad N=\lceil\alpha n/\beta\rceil
 & \,\,\,(\text{if}\,\,\,\mu = \alpha),\\
N=n,\quad M=\lceil\beta n/\alpha\rceil
 &  \,\,\,(\text{if}\,\,\,\mu = \beta).
\end{cases}
\label{eq:Def-SE-Sinc-MN}
\end{equation}
Then there exists a constant $C_1$, independent of $n$, such that
\begin{equation}
\left|
\int_{I_1}f(t)\diff t
- h\sum_{k=-M}^N f(\SEt{1}(kh))\SEtDiv{1}(kh)
\right|
\leq C_1\rme^{-\sqrt{2\pi d \mu n}}.
\label{leq:SE1-Sinc-Quad}
\end{equation}
\end{theorem}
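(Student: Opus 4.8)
The plan is to set $g(\zeta)=f(\SEt{1}(\zeta))\SEtDiv{1}(\zeta)=f(\sinh\zeta)\cosh\zeta$, so that $\int_{I_1}f(t)\diff t=\int_{-\infty}^{\infty}g(x)\diff x$ and the quadrature node sum is exactly $h\sum_{k=-M}^{N}g(kh)$. By the triangle inequality the error then splits into the \emph{discretization error} of the full (bi-infinite) trapezoidal sum and the two \emph{truncation tails}:
\begin{align*}
\left|\int_{I_1}f(t)\diff t-h\sum_{k=-M}^{N}g(kh)\right|
&\leq\left|\int_{-\infty}^{\infty}g(x)\diff x-h\sum_{k=-\infty}^{\infty}g(kh)\right|\\
&\quad{}+h\sum_{k=-\infty}^{-M-1}|g(kh)|+h\sum_{k=N+1}^{\infty}|g(kh)|.
\end{align*}
I would bound the three pieces separately and then balance them through the choice~\eqref{eq:Def-SE-h} of $h$.

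For the discretization error I would invoke the standard complex-analytic estimate for the infinite trapezoidal rule on the Hardy space $\Hone(\domD_d)$, which furnishes a bound of the form $C'\rme^{-2\pi d/h}$ once $g\in\Hone(\domD_d)$ is known. This membership is the crux of the proof. Analyticity of $g$ on $\domD_d$ is immediate: $\sinh$ maps $\domD_d$ analytically into $\SEt{1}(\domD_d)$, where $f$ is analytic, and $\cosh$ is entire. The substantive point is finiteness of the boundary norm $\None(g,\domD_d)=\lim_{y\to d^{-}}\int_{-\infty}^{\infty}\bigl(|g(x+\imnum y)|+|g(x-\imnum y)|\bigr)\diff x$. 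Using $1+\sinh^2\zeta=\cosh^2\zeta$ one has $|E_1(\sinh\zeta;\gamma)\cosh\zeta|=|\cosh\zeta|^{-\gamma}$, so the hypotheses~\eqref{leq:Sinc-quad-case1-alpha} and~\eqref{leq:Sinc-quad-case1-beta} give $|g(\zeta)|\leq K|\cosh\zeta|^{-\alpha}$ on the left half-strip and $|g(\zeta)|\leq K|\cosh\zeta|^{-\beta}$ on the right. Since $|\cosh(x+\imnum y)|=\sqrt{\sinh^2 x+\cos^2 y}$ grows like $\rme^{|x|}/2$ in $x$, the boundary integrand decays at least like $\rme^{-\mu|x|}$ and the integral converges; here the restriction $d<\pi/2$ is exactly what keeps $\cosh$ zero-free on the closed strip, guaranteeing $|\cosh(x\pm\imnum d)|\geq\cos d>0$ so that no boundary singularity appears.

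For the tails I reuse the same pointwise bound on the real axis. For $k\geq0$, \eqref{leq:Sinc-quad-case1-beta} together with $\cosh(kh)\geq\rme^{kh}/2$ gives $|g(kh)|\leq K|\cosh(kh)|^{-\beta}\leq K2^{\beta}\rme^{-\beta kh}$, so the right tail is majorized by a geometric series, $h\sum_{k=N+1}^{\infty}|g(kh)|\leq K2^{\beta}\rme^{-\beta Nh}\,h/(1-\rme^{-\beta h})$; since $h/(1-\rme^{-\beta h})$ stays bounded as $h\to0$, this is $\leq C''\rme^{-\beta Nh}$. The mirror-image argument based on~\eqref{leq:Sinc-quad-case1-alpha} bounds the left tail by $C''\rme^{-\alpha Mh}$. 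The definition~\eqref{eq:Def-SE-Sinc-MN} of $M$ and $N$ is precisely arranged so that both exponents $\alpha Mh$ and $\beta Nh$ are at least $\mu nh$.

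Finally, substituting $h=\sqrt{2\pi d/(\mu n)}$ makes the discretization exponent equal to $2\pi d/h=\sqrt{2\pi d\mu n}$ and each truncation exponent at least $\mu nh=\sqrt{2\pi d\mu n}$; all three contributions thus decay at the common rate $\rme^{-\sqrt{2\pi d\mu n}}$, and adding their bounds yields~\eqref{leq:SE1-Sinc-Quad} with $C_1$ assembled from $K$, the bound on $\None(g,\domD_d)$, and the bounded prefactors. The one genuinely nontrivial step is the Hardy-space membership of $g$ and the finiteness of its boundary norm in the second paragraph; once that is in hand, the remaining estimates are routine.
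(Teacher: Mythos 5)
Your proof is correct and follows essentially the paper's own route: the paper quotes this theorem from Stenger, and its machinery for the explicit-constant refinement (Theorem~\ref{thm:SE1-Sinc-Quad-Explicit}) consists of exactly your two steps, namely verifying that $F(\zeta)=f(\sinh\zeta)\cosh\zeta$ satisfies the strip decay bounds of the class $\LC_{L,R,\alpha,\beta}^{\textSEg}(\domD_d)$ (your $|\cosh\zeta|^{-\alpha}$ and $|\cosh\zeta|^{-\beta}$ estimates, equivalent via $(1+\rme^{-2\zeta})(1+\rme^{2\zeta})=4\cosh^2\zeta$), and then invoking Theorem~\ref{thm:SE-Sinc-Quad-overall}, whose omitted proof is precisely your decomposition into the $\Hone(\domD_d)$ trapezoidal discretization error of order $\rme^{-2\pi d/h}$ plus geometric truncation tails, balanced by the choices~\eqref{eq:Def-SE-h} and~\eqref{eq:Def-SE-Sinc-MN}. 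Your checks of the key points (zero-freeness of $\cosh$ on the closed strip from $d<\pi/2$, integrability of the boundary norm, and $\alpha Mh,\,\beta Nh\geq\mu nh$) are all sound.
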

\begin{theorem}[Stenger~{\cite[Theorem~4.2.6]{stenger93:_numer}}]
\label{thm:SE2-Sinc-Quad}
The following is true for $i=2$, $3$.
Assume that $f$ is analytic in $\SEt{$i$}(\domD_d)$, and
there exist positive constants $K$, $\alpha$, and $\beta$ such that
\begin{equation}
 |f(z)|\leq K | E_i(z;\alpha,\beta) |
\label{leq:Sinc-quad-case2-alpha-beta}
\end{equation}
for all $z\in\SEt{$i$}(\domD_d)$.
Let $\mu=\min\{\alpha,\,\beta\}$,
let $h$ be defined as~\eqref{eq:Def-SE-h},
and let $M$ and $N$ be defined as~\eqref{eq:Def-SE-Sinc-MN}.
Then there exists a constant $C_i$, independent of $n$, such that
\begin{equation}
\left|
\int_{I_i}f(t)\diff t
- h\sum_{k=-M}^N f(\SEt{$i$}(kh))\SEtDiv{$i$}(kh)
\right|
\leq C_i\rme^{-\sqrt{2\pi d \mu n}}.
\label{leq:SE2-Sinc-Quad}
\end{equation}
\end{theorem}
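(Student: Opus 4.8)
The plan is to reduce the assertion to the classical error analysis of the trapezoidal rule on the whole real line, applied to the transformed integrand. First I would set $F(x)=f(\SEt{$i$}(x))\SEtDiv{$i$}(x)$, so that the substitution $t=\SEt{$i$}(x)$ gives $\int_{I_i}f(t)\diff t=\int_{-\infty}^{\infty}F(x)\diff x$ and the quantity to be bounded becomes the Sinc-quadrature error for $F$. I would then decompose this error into a discretization part and a truncation part,
\begin{align*}
\int_{-\infty}^{\infty}F(x)\diff x-h\sum_{k=-M}^{N}F(kh)
&=\left(\int_{-\infty}^{\infty}F(x)\diff x-h\sum_{k=-\infty}^{\infty}F(kh)\right)\\
&\quad{}+h\sum_{k=-\infty}^{-M-1}F(kh)+h\sum_{k=N+1}^{\infty}F(kh),
\end{align*}
and estimate the two parts separately.

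For the discretization part, I would verify that $F$ belongs to the Hardy space $\Hone(\domD_d)$. Analyticity on $\domD_d$ follows from the analyticity of $f$ on $\SEt{$i$}(\domD_d)$ and of $\SEt{$i$}$ itself, while finiteness of the associated norm $\None(F,d)$ requires bounding $\int_{-\infty}^{\infty}|F(x+\imnum y)|\diff x$ uniformly for $|y|<d$. Invoking the hypothesis $|f(z)|\leq K|E_i(z;\alpha,\beta)|$ on $\SEt{$i$}(\domD_d)$, this reduces to estimating $|E_i(\SEt{$i$}(\zeta);\alpha,\beta)\,\SEtDiv{$i$}(\zeta)|$ along horizontal lines in the strip. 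Once $F\in\Hone(\domD_d)$ is established, the standard trapezoidal-rule bound in $\Hone(\domD_d)$ controls the discretization error by a constant multiple of $\rme^{-2\pi d/h}$.

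For the truncation part, I would use the decay of $F$ on the real axis. From the asymptotics of $E_i\circ\SEt{$i$}$ together with $\SEtDiv{$i$}$ one finds $|F(x)|\lesssim\rme^{-\alpha|x|}$ as $x\to-\infty$ and $|F(x)|\lesssim\rme^{-\beta x}$ as $x\to\infty$; summing the resulting geometric tails bounds $h\sum_{k\leq-M-1}|F(kh)|$ by a constant multiple of $\rme^{-\alpha Mh}$ and $h\sum_{k\geq N+1}|F(kh)|$ by a constant multiple of $\rme^{-\beta Nh}$. Finally I would insert $h=\sqrt{2\pi d/(\mu n)}$ together with the prescribed $M$ and $N$: this is exactly the choice that balances the three contributions, since then $2\pi d/h=\sqrt{2\pi d\mu n}$ while $\alpha Mh$ and $\beta Nh$ are each at least $\sqrt{2\pi d\mu n}$, so every part is $\Order(\rme^{-\sqrt{2\pi d\mu n}})$ and the claimed bound with some $n$-independent $C_i$ follows.

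The main obstacle I expect is the $\Hone(\domD_d)$ membership, concretely the uniform bound on $\int_{-\infty}^{\infty}|E_i(\SEt{$i$}(x+\imnum y);\alpha,\beta)\,\SEtDiv{$i$}(x+\imnum y)|\diff x$ over $|y|<d$. For $i=2$ this is comparatively transparent, since $\SEt{2}(\zeta)=\rme^{\zeta}$ maps the strip onto the sector $|\arg z|<d<\pi/2$, on which the factor $1+z^2$ stays away from zero and the power factors of $E_2$ have readily estimated modulus. For $i=3$ the map $\SEt{3}(\zeta)=\arcsinh(\rme^{\zeta})$ deforms the strip in a more intricate manner, so controlling the modulus of $\bigl(z/(1+z)\bigr)^{\alpha-1}$ and $\rme^{-\beta z}$ with $z=\SEt{3}(\zeta)$, and of $\SEtDiv{3}(\zeta)$, near the boundary $|\Im\zeta|=d$ is the delicate estimate that the geometry of $\arcsinh$ makes nontrivial.
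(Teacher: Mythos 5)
Your proposal is correct and takes essentially the same route as the paper: the paper cites Stenger for this statement, and its own machinery reproduces exactly your split---Theorem~\ref{thm:SE-Sinc-Quad-overall} (whose omitted proof is the discretization-plus-truncation decomposition for the trapezoidal rule in $\Hone(\domD_d)$ that you describe) combined with verifying that $F(\zeta)=f(\SEt{$i$}(\zeta))\SEtDiv{$i$}(\zeta)$ lies in the class $\LC_{L,R,\alpha,\beta}^{\textSEg}(\domD_d)$, whose two defining bounds are precisely your strip estimate for the discretization error and your real-axis decay for the geometric tails. The delicate $i=3$ estimates you flag as the main obstacle are exactly what the paper's Lemmas~\ref{lem:asinh-SE}--\ref{lem:cosh-2-SE} supply, controlling $\arcsinh(\rme^{\zeta})/(1+\arcsinh(\rme^{\zeta}))$, $1/(\rme^{\zeta}+\sqrt{1+\rme^{2\zeta}})$, and the related factors on $\overline{\domD_{\pi/2}}$ via the maximum modulus principle.
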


This paper explicitly estimates the constant $C_i$'s
in~\eqref{leq:SE1-Sinc-Quad} and~\eqref{leq:SE2-Sinc-Quad}
as follows.

\begin{theorem}
\label{thm:SE1-Sinc-Quad-Explicit}
Let the assumptions in Theorem~\ref{thm:SE1-Sinc-Quad} be fulfilled.
Furthermore, let $\nu=\max\{\alpha,\,\beta\}$.
Then the inequality~\eqref{leq:SE1-Sinc-Quad} holds with
\[
 C_1=\frac{2^{\nu+1}K}{\mu}
\left\{
 \frac{2}{(1-\rme^{-\sqrt{2\pi d \mu}})\{\cos d\}^{\nu}} + 1
\right\}.
\]
\end{theorem}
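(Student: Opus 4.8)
The plan is to split the quadrature error into a \emph{discretization error} over the full integer lattice and two \emph{truncation tails}, and to bound each summand by a computable quantity. Writing $g(\zeta)=f(\SEt{1}(\zeta))\SEtDiv{1}(\zeta)=f(\sinh\zeta)\cosh\zeta$, the triangle inequality gives
\[
\left|\int_{I_1}f(t)\diff t-h\sum_{k=-M}^N g(kh)\right|
\le\left|\int_{-\infty}^{\infty}g(x)\diff x-h\sum_{k=-\infty}^{\infty}g(kh)\right|
+h\sum_{k=-\infty}^{-M-1}|g(kh)|+h\sum_{k=N+1}^{\infty}|g(kh)|.
\]
The decisive simplification is the identity $1+\sinh^2\zeta=\cosh^2\zeta$, which turns the weighted integrand into a pure power of $\cosh$: on the relevant sets one has $|E_1(\sinh\zeta;\gamma)\cosh\zeta|=|\cosh\zeta|^{-\gamma}$, so the hypotheses \eqref{leq:Sinc-quad-case1-alpha}--\eqref{leq:Sinc-quad-case1-beta} read $|g(\zeta)|\le K|\cosh\zeta|^{-\alpha}$ on $\domD_d^{-}$ and $|g(\zeta)|\le K|\cosh\zeta|^{-\beta}$ on $\domD_d^{+}$.

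For the discretization error I would invoke the classical trapezoidal-rule bound for the Hardy space $\Hone(\domD_d)$, namely that the left-hand lattice error is at most $\None(g,\domD_d)/(\rme^{2\pi d/h}-1)$, so the task reduces to estimating $\None(g,\domD_d)=\int_{-\infty}^{\infty}(|g(x+\imnum d)|+|g(x-\imnum d)|)\diff x$ explicitly. Here the key inequality is $|\cosh(x\pm\imnum d)|^2=\cosh^2 x-\sin^2 d\ge\{\cos d\}^2\cosh^2 x$, which gives $|\cosh(x\pm\imnum d)|^{-\gamma}\le\{\cos d\}^{-\gamma}(\cosh x)^{-\gamma}$; splitting each boundary integral at $x=0$ into the $\alpha$- and $\beta$-regimes and using $\cosh x\ge\rme^{|x|}/2$ yields $\int_0^{\infty}(\cosh x)^{-\gamma}\diff x\le 2^{\gamma}/\gamma$. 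Bounding each exponent and reciprocal by $\nu=\max\{\alpha,\beta\}$ and $\mu=\min\{\alpha,\beta\}$, and accounting for the two boundaries, then gives $\None(g,\domD_d)\le 2^{\nu+2}K/(\mu\{\cos d\}^{\nu})$. Finally, since $2\pi d/h=\sqrt{2\pi d\mu n}$ and $n\ge 1$, I would replace $1/(\rme^{2\pi d/h}-1)$ by $\rme^{-\sqrt{2\pi d\mu n}}/(1-\rme^{-\sqrt{2\pi d\mu}})$, which delivers exactly the first summand of $C_1$.

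The truncation tails use the same pointwise estimate restricted to the real axis, where $|\cosh kh|=\cosh kh\ge\rme^{|k|h}/2$ gives $|g(kh)|\le K2^{\beta}\rme^{-\beta kh}$ for $k>0$ and $|g(kh)|\le K2^{\alpha}\rme^{\alpha kh}$ for $k<0$. Comparing the geometric sums with integrals of the monotone exponential, $h\sum_{k=N+1}^{\infty}\rme^{-\beta kh}\le\rme^{-\beta Nh}/\beta$ and likewise for the left tail, reduces everything to the exponents $\beta Nh$ and $\alpha Mh$. By the choices \eqref{eq:Def-SE-h}--\eqref{eq:Def-SE-Sinc-MN} — treating $\mu=\alpha$ and $\mu=\beta$ symmetrically — both exponents are at least $\sqrt{2\pi d\mu n}$ (with equality on the controlling side), so each tail is at most $(2^{\nu}K/\mu)\rme^{-\sqrt{2\pi d\mu n}}$, and the two together contribute the ``$+1$'' summand of $C_1$. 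Adding the discretization and truncation estimates then reproduces precisely the claimed constant. I expect the main obstacle to be the bookkeeping of the second paragraph: extracting \emph{sharp} explicit constants for $\None(g,\domD_d)$ from the $\cosh$-estimate while keeping the $\alpha/\beta$ asymmetry under control, together with the uniform-in-$n$ replacement of the denominator; the discretization lemma itself is standard and the tail sums are routine.
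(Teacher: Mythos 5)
Your proposal is correct, and the constants assemble exactly as claimed; in substance it is the paper's own argument with the packaging removed. The paper proves this theorem in two steps: a membership lemma showing that $F(\zeta)=f(\sinh\zeta)\cosh\zeta$ lies in the class $\LC_{L,R,\alpha,\beta}^{\textSEg}(\domD_d)$, defined by the weights $|1+\rme^{-2\zeta}|^{-\alpha/2}|1+\rme^{2\zeta}|^{-\beta/2}$, with $L=2^{\nu}K/\{\cos d\}^{(\nu-\mu)/2}$ and $R=2^{\nu}K$, followed by an appeal to the class theorem (Theorem~\ref{thm:SE-Sinc-Quad-overall}), whose proof is omitted as being the case-4 argument of Okayama et al.\ --- and that omitted proof is precisely your decomposition into the $\Hone$-discretization bound $\None(F,\domD_d)/(\rme^{2\pi d/h}-1)$ plus two exponential tails. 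You instead work directly with the weight $|\cosh\zeta|^{-\gamma}$ via $1+\sinh^2\zeta=\cosh^2\zeta$; since $4\cosh^2\zeta=(1+\rme^{2\zeta})(1+\rme^{-2\zeta})$, your inequality $|\cosh(x+\imnum y)|^2=\cosh^2x-\sin^2y\ge\{\cos d\}^2\cosh^2x$ plays exactly the role of the paper's estimates of the factors $\{1-\sin^2 y/\cosh^2x\}^{(\beta-\alpha)/4}$, and the bookkeeping closes: $\None\le 2^{\nu+2}K/(\mu\{\cos d\}^{\nu})$ using $2^{\gamma}/(\gamma\{\cos d\}^{\gamma})\le 2^{\nu}/(\mu\{\cos d\}^{\nu})$ for $\gamma\in\{\alpha,\beta\}$, while $2\pi d/h=\sqrt{2\pi d\mu n}$, $\alpha Mh=\sqrt{2\pi d\mu n}$ and $\beta Nh\ge\alpha nh$ when $\mu=\alpha$ (symmetrically when $\mu=\beta$) give each tail at most $(2^{\nu}K/\mu)\rme^{-\sqrt{2\pi d\mu n}}$, reproducing $C_1$ exactly. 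What the paper's factorization buys is reuse: the same membership lemma simultaneously feeds Theorem~\ref{thm:SE-Sinc-Indef-overall} to yield Theorem~\ref{thm:SE1-Sinc-Indef}, and the class theorem serves cases 1--3 uniformly; your inlined route is shorter and self-contained for this single theorem, at the cost of citing the trapezoidal bound for $\Hone(\domD_d)$ with its hypotheses verified --- which your decay estimates do ensure, though you should state $\None$ as the limit of integrals along $|\Im\zeta|=y$ as $y\to d^{-}$ (your bound is uniform in $y<d$, so this is cosmetic), since $f$ is only assumed analytic in the open region $\SEt{1}(\domD_d)$.
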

\begin{theorem}
\label{thm:SE2-Sinc-Quad-Explicit}
Let the assumptions in Theorem~\ref{thm:SE2-Sinc-Quad} be fulfilled.
Then the inequality~\eqref{leq:SE2-Sinc-Quad} holds with
\begin{align*}
 C_2&=\frac{2K}{\mu}
\left\{
 \frac{2}{(1-\rme^{-\sqrt{2\pi d \mu}})\{\cos d\}^{(\alpha+\beta)/2}} + 1
\right\},\\
C_3&=\frac{2K}{\mu}
\left\{
 \frac{2^{1+(\beta/2)}c_{\alpha,d}}{(1-\rme^{-\sqrt{2\pi d \mu}})\{\cos d\}^{(\alpha+\beta)/2}} + 2^{(1-\alpha+|1-\alpha|)/2}
\right\},
\end{align*}
where $c_{\alpha,d}$ is a constant defined by
\begin{equation}
 c_{\alpha,d}=
\begin{cases}
\left\{2
\left(1 + \frac{1}{\cos d}\right)\right\}^{(1-\alpha)/2}
%
 & (\text{if}\,\,\,0<\alpha < 1),\\
2^{(\alpha-1)/2}
 & (\text{if}\,\,\,1\leq \alpha).
\end{cases}
\label{eq:def-c_alpha_d}
\end{equation}
\end{theorem}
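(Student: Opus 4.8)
The plan is to decompose the error into a \emph{discretization error} and a \emph{truncation error}. For $i\in\{2,3\}$ write $g_i(x)=f(\SEt{$i$}(x))\SEtDiv{$i$}(x)$ for the transformed integrand, so that
\[
\left|\int_{I_i}f(t)\diff t-h\sum_{k=-M}^{N}g_i(kh)\right|
\leq
\underbrace{\left|\int_{-\infty}^{\infty}g_i(x)\diff x-h\sum_{k=-\infty}^{\infty}g_i(kh)\right|}_{=:E_D}
+\underbrace{h\sum_{k\leq -M-1}|g_i(kh)|+h\sum_{k\geq N+1}|g_i(kh)|}_{=:E_T}.
\]
For $E_D$ I would first confirm $g_i\in\Hone(\domD_d)$: analyticity is inherited from that of $f$ on $\SEt{$i$}(\domD_d)$ and of $\SEt{$i$}$ on $\domD_d$, and finiteness of the norm $\None(g_i,d)$ follows from the decay hypotheses. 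Then I invoke the classical Sinc discretization bound $E_D\leq\None(g_i,d)\,\rme^{-2\pi d/h}/(1-\rme^{-2\pi d/h})$. Substituting $h=\sqrt{2\pi d/(\mu n)}$ from \eqref{eq:Def-SE-h} turns $\rme^{-2\pi d/h}$ into $\rme^{-\sqrt{2\pi d\mu n}}$, and since $1-\rme^{-2\pi d/h}\geq 1-\rme^{-\sqrt{2\pi d\mu}}$ for $n\geq1$, the whole task reduces to bounding $\None(g_i,d)$ by an explicit constant.

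For $i=2$ (where $\SEt{2}(x)=\rme^{x}$ and $\SEtDiv{2}(x)=\rme^{x}$) the crux is the elementary inequality that for $z=\rme^{\zeta}$ with $\zeta=x+\imnum y$ and $|y|\leq d$ one has $|1+z^{2}|\geq(\cos d)(1+|z|^{2})$; indeed, writing $r=|z|^{2}=\rme^{2x}$ and $\phi=2y$,
\[
|1+r\rme^{\imnum\phi}|^{2}-\cos^{2}(\phi/2)(1+r)^{2}=\frac{1}{2}(1-\cos\phi)(1-r)^{2}\geq0 ,
\]
and $\cos(\phi/2)=\cos y\geq\cos d$. Applying this on the lines $\Im\zeta=\pm d$ to \eqref{leq:Sinc-quad-case2-alpha-beta} gives $|g_2(x\pm\imnum d)|\leq K\rme^{\alpha x}/\{(\cos d)(1+\rme^{2x})\}^{(\alpha+\beta)/2}$. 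Splitting $\int_{-\infty}^{\infty}\rme^{\alpha x}(1+\rme^{2x})^{-(\alpha+\beta)/2}\diff x$ at $x=0$ (bounding $1+\rme^{2x}\geq1$ for $x\leq0$ and $1+\rme^{2x}\geq\rme^{2x}$ for $x\geq0$) bounds it by $1/\alpha+1/\beta\leq 2/\mu$, so $\None(g_2,d)\leq 4K/(\mu\{\cos d\}^{(\alpha+\beta)/2})$. This reproduces exactly the first term inside the braces of $C_2$.

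For $E_T$ in case $i=2$ the same pointwise bound on the real axis reads $|g_2(x)|\leq K\rme^{\alpha x}$ for $x\leq0$ and $|g_2(x)|\leq K\rme^{-\beta x}$ for $x\geq0$, and I compare the resulting geometric tail sums with integrals, e.g.
\[
h\sum_{k\geq N+1}\rme^{-\beta kh}\leq\int_{Nh}^{\infty}\rme^{-\beta x}\diff x=\frac{\rme^{-\beta Nh}}{\beta}.
\]
With the prescribed $M$ and $N$ of \eqref{eq:Def-SE-Sinc-MN}, both exponents $\alpha Mh$ and $\beta Nh$ are at least $\sqrt{2\pi d\mu n}$ while the prefactors $1/\alpha,1/\beta$ are at most $1/\mu$, whence $E_T\leq(2K/\mu)\rme^{-\sqrt{2\pi d\mu n}}$; this is the ``$+1$'' term, and adding $E_D$ yields $C_2$.

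The genuinely harder case is $i=3$, where $\SEt{3}(x)=\arcsinh(\rme^{x})$ is nonlinear. The same decomposition and discretization bound apply, and the factor $\sqrt{1+\rme^{2\zeta}}$ in $\SEtDiv{3}$ is again tamed by the inequality above; the difficulty is estimating the weight $E_3(z;\alpha,\beta)=(z/(1+z))^{\alpha-1}\rme^{-\beta z}$ along the image curve $z=\arcsinh(\rme^{x\pm\imnum d})$. For the exponential factor I would use $\rme^{z}=\rme^{\zeta}+\sqrt{1+\rme^{2\zeta}}$ and lower-bound $|\rme^{z}|$ on $\Im\zeta=\pm d$, which is the source of the power $2^{1+\beta/2}$; for the algebraic factor $(z/(1+z))^{\alpha-1}$ I would bound $|z/(1+z)|$ from above and below along the curve, splitting by the sign of $\alpha-1$, which is precisely what produces the two branches of $c_{\alpha,d}$ in \eqref{eq:def-c_alpha_d}. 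The corresponding real-axis estimates, together with $\arcsinh(\rme^{x})\sim x+\log2$ as $x\to\infty$, give the truncation constant $2^{(1-\alpha+|1-\alpha|)/2}=2^{\max\{1-\alpha,0\}}$. I expect these $\arcsinh$-image estimates --- producing clean, explicitly constant upper and lower bounds for $|z/(1+z)|$ and $|\rme^{z}|$ uniformly over the two boundary lines --- to be the main obstacle, since, unlike the $\rme^{\zeta}$ map, they do not collapse to a single elementary identity.
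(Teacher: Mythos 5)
Your decomposition into discretization plus truncation error, with the classical bound $E_D\leq\None(g_i,d)\,\rme^{-2\pi d/h}/(1-\rme^{-2\pi d/h})$, is exactly the engine behind the paper's Theorem~\ref{thm:SE-Sinc-Quad-overall} (stated there without proof, with a pointer to the finite-interval analogue), and your case $i=2$ is complete and correct: the inequality $|1+\rme^{2\zeta}|\geq(\cos y)(1+\rme^{2x})$, the bound $\None(g_2,d)\leq 4K/(\mu\{\cos d\}^{(\alpha+\beta)/2})$ via $1/\alpha+1/\beta\leq 2/\mu$, and the tail sums with $\alpha Mh,\,\beta Nh\geq\sqrt{2\pi d\mu n}$ reproduce $C_2$ exactly. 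The paper reaches the same constant by checking that $g_2$ satisfies \eqref{ineq:LC-SE-complex}--\eqref{ineq:LC-SE-real} with $L=R=K$ --- immediate from the identity $(1+\rme^{-2\zeta})^{\alpha/2}(1+\rme^{2\zeta})^{\beta/2}=\rme^{-\alpha\zeta}(1+\rme^{2\zeta})^{(\alpha+\beta)/2}$ --- and then invoking the generic theorem; same content, different packaging.

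For $i=3$, however, what you submit is a plan, not a proof, and you acknowledge as much. The entire substance of the constant $C_3$ lies in the uniform estimates you defer: (i) the two-sided comparison $\tfrac{1}{\sqrt{2}}\,|p(\rme^{\zeta})|\leq|p(\arcsinh(\rme^{\zeta}))|\leq\sqrt{2}\,|p(\rme^{\zeta})|$ with $p(w)=w/(1+w)$ on $\overline{\domD_{\pi/2}}$ (the paper's Lemma~\ref{lem:asinh-SE}, proved via the maximum modulus principle for $g$ and $1/g$ with explicit boundary computations of $\Re$ and $\Im$ of $\arcsinh(\imnum\rme^{x})$, split over $x\geq0$, $x<0$, and two ranges of $s=\arctan[\rme^{x}/\sqrt{1-\rme^{2x}}]$); (ii) $|\rme^{\zeta}+\sqrt{1+\rme^{2\zeta}}|^{-1}\leq\sqrt{2}\,|1+\rme^{\zeta}|^{-1}$ (Lemma~\ref{lem:exp-asinh-SE}); and (iii) $|1+\rme^{\zeta}|^{-1}\leq|1+\rme^{2\zeta}|^{-1/2}$ (Lemma~\ref{lem:exp-2-SE}). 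Moreover, your stated route for the algebraic factor is subtly mislocated: for $0<\alpha<1$ there is no ``clean, explicitly constant'' lower bound for $|z/(1+z)|$ uniformly over the boundary lines, since $|\arcsinh(\rme^{\zeta})/(1+\arcsinh(\rme^{\zeta}))|\to0$ as $\Re\zeta\to-\infty$. What (i) buys is only a comparison with $|\rme^{\zeta}/(1+\rme^{\zeta})|$; one is then left with the factor $|1+\rme^{-\zeta}|^{1-\alpha}$, which \emph{grows} as $\Re\zeta\to-\infty$ and must be absorbed into the two-sided weight through the identity $|2(1+\rme^{-\zeta})^{2}/(1+\rme^{-2\zeta})|=4\,|\cosh^{2}(\zeta/2)/\cosh\zeta|$ together with the bound $4\,|\cosh^{2}(\zeta/2)/\cosh\zeta|\leq 2(1+1/\cos d)$ (Lemma~\ref{lem:cosh-2-SE}). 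This absorption --- not an upper/lower bound ``along the curve'' --- is the actual source of the branch $\{2(1+1/\cos d)\}^{(1-\alpha)/2}$ of $c_{\alpha,d}$ in \eqref{eq:def-c_alpha_d}, and its real-axis specialization ($y=0$, giving $4^{(1-\alpha)/2}=2^{1-\alpha}$) is what yields the truncation constant $2^{(1-\alpha+|1-\alpha|)/2}$, rather than the asymptotics $\arcsinh(\rme^{x})\sim x+\log 2$ you cite. Until (i)--(iii) and the $\cosh$-ratio bound are actually proved, the claimed $C_3$ is not derived; these four lemmas constitute essentially all of the paper's work on this theorem.
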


\subsection{Existing and New Error Estimates for the Sinc Quadrature with the DE Transformation}
\label{subsec:main-DE-Sinc-Quad}

Existing error analyses for the Sinc quadrature
with $\DEt{1}$, $\DEt{2}$, $\DEt{3}$, and $\DEt{3$\dagger$}$
are written in the following form
(Theorems~\ref{thm:DE1-Sinc-Quad} and~\ref{Thm:DEt3-Sinc-quad}).

\begin{theorem}[Tanaka et al.~{\cite[Theorem 3.1]{tanaka09:_deint}}]
\label{thm:DE1-Sinc-Quad}
The following is true for $i=1$, $2$, $3$.
Assume that $f$ is analytic in $\DEt{$i$}(\domD_d)$, and
there exist positive constants $K$ and $\mu$
(with $\mu\leq 1$ in case $i=3$)
such that
$
|f(z)|\leq K |E_i(z;\mu)|
$
for all $z\in\DEt{$i$}(\domD_d)$.
Then there exists a constant $C$, independent of $n$, such that
\begin{equation*}
\left|
\int_{I_i}f(t)\diff t
- h\sum_{k=-n}^n f(\DEt{1}(kh))\DEtDiv{1}(kh)
\right|
\leq C\rme^{-2\pi d n/\log(8 d n/\mu)},
\end{equation*}
where
\begin{equation}
h=\frac{\log(8 d n/\mu)}{n}.
\label{eq:Def-DE-h}
\end{equation}
\end{theorem}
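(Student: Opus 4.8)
The plan is to follow the classical two-part decomposition of the Sinc-quadrature error. Fix $i\in\{1,2,3\}$ and abbreviate $\psi=\DEt{$i$}$ and $g(x)=f(\psi(x))\,\psi'(x)$. Since $\psi$ maps $\mathbb{R}$ bijectively onto $I_i$, the change of variables $t=\psi(x)$ gives $\int_{I_i}f(t)\diff t=\int_{-\infty}^{\infty}g(x)\diff x$, so the quantity to be bounded is
\begin{equation*}
\int_{-\infty}^{\infty}g(x)\diff x-h\sum_{k=-n}^{n}g(kh)
=\underbrace{\left(\int_{-\infty}^{\infty}g(x)\diff x-h\sum_{k=-\infty}^{\infty}g(kh)\right)}_{=:E_{\mathrm{D}}}
+\underbrace{h\sum_{|k|>n}g(kh)}_{=:E_{\mathrm{T}}}.
\end{equation*}
Here $E_{\mathrm{D}}$ is the discretization error of the infinite trapezoidal rule and $E_{\mathrm{T}}$ is the truncation error; I would bound the two separately and then balance them through the choice of $h$.

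For $E_{\mathrm{D}}$ I would invoke the standard error bound for the trapezoidal rule over $\domD_d$: if $g$ is holomorphic on $\domD_d$ and the boundary quantity $\None(g,\domD_d):=\lim_{y\to d^{-}}\int_{-\infty}^{\infty}\bigl(|g(x+\imnum y)|+|g(x-\imnum y)|\bigr)\diff x$ is finite, then $|E_{\mathrm{D}}|\leq \None(g,\domD_d)\,\rme^{-2\pi d/h}/(1-\rme^{-2\pi d/h})$, a fact proved by a residue/Fourier (Poisson-summation) argument in which the Fourier transform of $g$ is bounded by shifting the contour to $\Im\zeta=\pm d$. The nontrivial input is the hypothesis of this lemma, namely that $g\in\Hone(\domD_d)$ with $\None(g,\domD_d)<\infty$. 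Holomorphy of $g$ on $\domD_d$ is immediate from the analyticity of $f$ on $\psi(\domD_d)$ together with that of $\psi$; the finiteness of $\None(g,\domD_d)$ requires estimating $|f(\psi(\zeta))\,\psi'(\zeta)|\leq K\,|E_i(\psi(\zeta);\mu)|\,|\psi'(\zeta)|$ for $\zeta=x+\imnum y$ with $|y|<d$ and integrating in $x$. This is where the condition $d<\pi/2$ (and, for $i=3$, $\mu\leq 1$) is used: it guarantees that $\psi(\domD_d)$ stays inside the region where the decay bound on $f$ holds and that the resulting $x$-integral converges uniformly in $y$.

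For $E_{\mathrm{T}}$ I would use the double-exponential decay of $g$ on the real line. From $|g(x)|\leq K\,|E_i(\psi(x);\mu)|\,\psi'(x)$ and the explicit form of $\psi$, one obtains a bound of the type $|g(x)|\leq C'\,\rme^{-c\,\rme^{|x|}}$ as $x\to\pm\infty$ (the rates at $+\infty$ and $-\infty$ being controlled by the same $\mu$, with $c$ proportional to $\mu$), and hence, splitting the tail into its two halves,
\begin{equation*}
|E_{\mathrm{T}}|\leq h\sum_{|k|>n}|g(kh)|\leq C''\,\rme^{-c\,\rme^{nh}},
\end{equation*}
the sum being dominated by its first term because of the super-geometric decay.

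Finally I would substitute $h=\log(8dn/\mu)/n$. Then $\rme^{-2\pi d/h}=\rme^{-2\pi d n/\log(8dn/\mu)}$, while $\rme^{nh}=8dn/\mu$ makes $E_{\mathrm{T}}$ decay like $\rme^{-\Order(n)}$, which is asymptotically faster than $E_{\mathrm{D}}$; the constant $8d/\mu$ in the definition of $h$ is chosen precisely so that the truncation error is dominated by the discretization error. Collecting the two estimates and absorbing all $n$-independent factors into a single constant $C$ yields the claimed bound. I expect the main obstacle to be the estimate in the second paragraph, i.e.\ controlling $|f(\psi(\zeta))\,\psi'(\zeta)|$ on the complex strip and thereby proving $\None(g,\domD_d)<\infty$ with the right dependence; the decomposition and the truncation estimate are routine once that is in hand, and the three cases $i=1,2,3$ differ only in these case-specific transform estimates, the skeleton being identical.
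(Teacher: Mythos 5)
Your proposal is correct and matches the intended proof: the paper itself states this theorem without proof (citing Tanaka et al.), and both that original proof and the paper's own machinery for the explicit-constant analogues (the class $\LC_{L,R,\alpha,\beta}^{\textDEg}(\domD_d)$ of Section 5.2, whose strip bound~\eqref{ineq:LC-DE-complex} feeds the discretization estimate via $\None(g,\domD_d)$ and whose real-line bound~\eqref{ineq:LC-DE-real} feeds the truncation estimate) rest on exactly your decomposition into discretization plus truncation error, balanced by $h=\log(8dn/\mu)/n$ so that the doubly exponential tail $\rme^{-c\,\rme^{nh}}$ with $c\propto\mu$ becomes $\rme^{-\Order(n)}$ and is dominated by $\rme^{-2\pi d n/\log(8dn/\mu)}$. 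The step you flag as the main obstacle---bounding $|f(\psi(\zeta))\psi'(\zeta)|$ on the strip to get $\None(g,\domD_d)<\infty$, where $d<\pi/2$ and (for $i=3$) $\mu\leq 1$ enter---is precisely the case-by-case work the paper carries out in its Section 5.2 lemmas, so there is no gap to report.
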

\begin{theorem}[Tanaka et al.~{\cite[Theorem 3.1]{tanaka09:_deint}}]
\label{Thm:DEt3-Sinc-quad}
Assume that $f$ is analytic in $\DEt{3$\dagger$}(\domD_d)$, and
there exist positive constants $K$ and $\mu$
with $\mu\leq 1$ such that
$|f(z)|\leq K | E_3(z;\mu)|$
for all $z\in\DEt{3$\dagger$}(\domD_d)$.
Then there exists a constant $C$, independent of $n$, such that
\begin{equation*}
\left|
\int_{I_3}f(t)\diff t
-h\sum_{k=-n}^n f(\DEt{3$\dagger$}(kh))\DEtDiv{3$\dagger$}(kh)
\right|
\leq C \rme^{-2\pi d n/\log(2 \pi d n/\mu)},
\end{equation*}
where $h=\log(2 \pi d n/\mu)/n$.
\end{theorem}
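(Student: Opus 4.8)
The plan is to use the standard two-part decomposition of the Sinc-quadrature error. Setting $g(x)=f(\DEt{3$\dagger$}(x))\DEtDiv{3$\dagger$}(x)$, so that $\int_{I_3}f(t)\diff t=\int_{-\infty}^{\infty}g(x)\diff x$, I would first split
\[
\left|\int_{-\infty}^{\infty}g(x)\diff x-h\sum_{k=-n}^{n}g(kh)\right|
\leq
\left|\int_{-\infty}^{\infty}g(x)\diff x-h\sum_{k=-\infty}^{\infty}g(kh)\right|
+h\sum_{|k|>n}|g(kh)|,
\]
and bound the discretization error (first term on the right) and the truncation error (second term) separately.

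For the discretization error I would show that $g$ belongs to the Hardy space $\Hone(\domD_d)$ and estimate its norm $\None(g,\domD_d)$. Analyticity of $g$ on $\domD_d$ is inherited from that of $f$ and $\DEt{3$\dagger$}$, while the classical trapezoidal estimate for functions in $\Hone(\domD_d)$ gives a bound of the form $\None(g,\domD_d)\,\rme^{-2\pi d/h}/(1-\rme^{-2\pi d/h})$, whose decisive factor is $\rme^{-2\pi d/h}$. To control $\None(g,\domD_d)$ I would parametrize the boundary lines $\zeta=x\pm\imnum d$ (with the usual limiting argument from slightly narrower strips) and insert $|f(\DEt{3$\dagger$}(\zeta))|\leq K|E_3(\DEt{3$\dagger$}(\zeta);\mu)|$. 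Since $\DEt{3$\dagger$}(x+\imnum d)=\exp(x-\rme^{-x}\cos d)\exp(\imnum(d+\rme^{-x}\sin d))$, one has $|\DEt{3$\dagger$}(x+\imnum d)|=\exp(x-\rme^{-x}\cos d)$; combined with $|\DEtDiv{3$\dagger$}(\zeta)|=|\DEt{3$\dagger$}(\zeta)|\,|1+\rme^{-\zeta}|$, the boundary integrand of $\None$ behaves like $\rme^{(\mu-1)x}\exp(-\mu(\cos d)\rme^{-x})$ as $x\to-\infty$ and like $\rme^{x}\exp(-\mu(\cos d)\rme^{x})$ as $x\to+\infty$. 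Both are doubly-exponentially small, so the boundary integral converges and admits an explicit bound.

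For the truncation error I would reuse the pointwise decay of $g$ along $\mathbb{R}$: as $k\to+\infty$ one gets $|g(kh)|\lesssim\rme^{kh}\exp(-\mu\rme^{kh})$, and as $k\to-\infty$ one gets $|g(kh)|\lesssim\rme^{(\mu-1)kh}\exp(-\mu\rme^{-kh})$. Comparing each tail with a monotone integral and dominating it by its leading term, then inserting $h=\log(2\pi d n/\mu)/n$ so that $\rme^{nh}=2\pi d n/\mu$, I expect a truncation error of order $\rme^{-2\pi d n}$ (up to polynomial factors), which is negligible against the discretization error. Consequently the factor $\rme^{-2\pi d/h}=\rme^{-2\pi d n/\log(2\pi d n/\mu)}$ governs the total error and yields the claimed rate.

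The hard part will be estimating $\None(g,\domD_d)$ near the doubly-exponential end $x\to-\infty$: there the exponential factor $\exp(-\mu\DEt{3$\dagger$}(\zeta))$ of $E_3$ is of no help (because $\DEt{3$\dagger$}(\zeta)\to0$), so the necessary decay must be extracted by pairing the algebraic factor of $E_3$ with the Jacobian $\DEtDiv{3$\dagger$}$, with the hypothesis $\mu\leq1$ entering this estimate. One must also track the argument of $\DEt{3$\dagger$}(\zeta)$, which winds around the origin as $x\to-\infty$, so as to ensure that the modulus identity $|w^{\mu-1}|=|w|^{\mu-1}$ stays valid along a single branch of the power. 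Converting this convergence argument into the explicit, computable constant $C$ required by the statement is the principal technical burden.
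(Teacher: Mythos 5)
This statement is one of the paper's \emph{quoted} results (Tanaka et al., Theorem~3.1 of \cite{tanaka09:_deint}); the paper itself contains no proof of it, so your attempt can only be measured against the standard argument in the literature, which is also the template the paper follows in Section~\ref{sec:proofs} for its own DE theorems (verify that the transformed integrand lies in a class like $\LC_{L,R,\alpha,\beta}^{\textDEg}(\domD_d)$, then apply a generic discretization-plus-truncation estimate). Your outline reconstructs exactly this route, and the key computations check out: $|\DEt{3$\dagger$}(x+\imnum d)|=\exp(x-\rme^{-x}\cos d)$ and $|\DEtDiv{3$\dagger$}(\zeta)|=|\DEt{3$\dagger$}(\zeta)||1+\rme^{-\zeta}|$ are correct; pairing the algebraic factor $|z|^{\mu-1}$ of $E_3$ with the Jacobian indeed yields the majorant $\rme^{(\mu-1)x}\exp(-\mu(\cos d)\rme^{-x})$ at the left end (doubly exponentially small, since the exponential factor beats the growing prefactor for every $\mu>0$), while at the right end $\rme^{-\mu\Re z}$ with $\Re z\sim \rme^{x}\cos d$ controls everything, so $\None(g,\domD_d)<\infty$ and the Hardy-space bound $\None(g,\domD_d)\,\rme^{-2\pi d/h}/(1-\rme^{-2\pi d/h})$ applies; and with $\rme^{nh}=2\pi dn/\mu$ both truncation tails are $\Order(n^{c}\rme^{-2\pi dn})$, subdominant to the discretization term, which gives the stated rate $\rme^{-2\pi dn/\log(2\pi dn/\mu)}$. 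This is a correct proof sketch of the theorem.

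Three minor corrections. First, the branch issue you flag is vacuous: the exponent $\mu-1$ is real, so $|w^{\mu-1}|=|w|^{\mu-1}$ holds for any fixed branch of the power, winding or not. Second, you locate the role of $\mu\leq 1$ in the left-tail estimate, but as noted above that estimate works for all $\mu>0$; the hypothesis $\mu\leq 1$ is needed instead so that the assumed bound $K|E_3(z;\mu)|$ stays finite where $z$ approaches $-1$ in $\DEt{3$\dagger$}(\domD_d)$ (for $\mu>1$ the factor $|z/(1+z)|^{\mu-1}$ blows up there) --- precisely the point of the paper's remark following the theorem. Third, the statement only asserts the \emph{existence} of a constant $C$ independent of $n$; no computable constant is demanded here (that is the contribution of the paper's new theorems, for $\DEt{3$\ddagger$}$ rather than $\DEt{3$\dagger$}$), so the ``principal technical burden'' you anticipate is not actually required to establish this particular statement.
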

\begin{remark}
As for Theorem~\ref{thm:DE1-Sinc-Quad} with $i=3$
and Theorem~\ref{Thm:DEt3-Sinc-quad},
although the condition `$\mu\leq 1$' is not assumed (only $\mu>0$ is assumed)
in the original
paper~\cite{tanaka09:_deint},
that condition is necessary to avoid the case
where $|z/(1+z)|^{\mu-1}=\infty$
at $z=-1$ (see $E_3(z;\mu)$).
\end{remark}


As for case 1 (Theorem~\ref{thm:DE1-Sinc-Quad} with $i=1$)
and case 2 (Theorem~\ref{thm:DE1-Sinc-Quad} with $i=2$),
this paper not only explicitly estimates the constant $C$'s,
but also generalizes the approximation formula
from $\sum_{k=-n}^n$ to $\sum_{k=-M}^N$ as stated below.
Here, $x_{\gamma}$ is defined for $\gamma>0$ by
\[
 x_{\gamma} =
\begin{cases}
\arcsinh\left(\frac{\sqrt{1+\sqrt{1-(2\pi\gamma)^2}}}{2 \pi \gamma}\right)
&(\text{if}\,\,\,0<\gamma<1/(2\pi)),\\
\arcsinh(1)
&(\text{if}\,\,\,1/(2\pi)\leq \gamma),
\end{cases}
\]
which is introduced to determine the region of $x$
where $\cosh(x)\rme^{\pm\pi\gamma\sinh x}$ is
monotone
(see Okayama et al.~\cite[Proposition 4.17]{okayama09:_error}).

\begin{theorem}
\label{thm:DE1-Sinc-Quad-explicit}
Assume that $f$ is analytic in $\DEt{1}(\domD_d)$, and
there exist positive constants $K$, $\alpha$, and $\beta$
such that~\eqref{leq:Sinc-quad-case1-alpha} holds
for all $z\in\DEt{1}(\domD_d^{-})$,
and~\eqref{leq:Sinc-quad-case1-beta} holds
for all $z\in\DEt{1}(\domD_d^{+})$.
Let $\mu=\min\{\alpha,\,\beta\}$,
let $\nu=\max\{\alpha,\,\beta\}$,
let $h$ be defined as~\eqref{eq:Def-DE-h},
and let $M$ and $N$ be defined as
\begin{equation}
\begin{cases}
M=n,\quad N=n - \lfloor\log(\beta/\alpha)/h\rfloor
 & \,\,\,(\text{if}\,\,\,\mu = \alpha),\\
N=n,\quad M=n-\lfloor\log(\alpha/\beta)/h\rfloor
 &  \,\,\,(\text{if}\,\,\,\mu = \beta).
\end{cases}
\label{eq:Def-DE-Sinc-MN}
\end{equation}
Furthermore,
let $n$ be taken sufficiently large so that
$n\geq (\nu \rme)/(8 d)$,
$M h \geq x_{\alpha/2}$, and $N h\geq x_{\beta/2}$ hold.
Then it holds that
\begin{equation*}
\left|
\int_{I_1}f(t)\diff t
- h\sum_{k=-M}^N f(\DEt{1}(kh))\DEtDiv{1}(kh)
\right|
\leq C_1\rme^{-2\pi d n/\log(8 d n/\mu)},
\end{equation*}
where $C_1$ is a constant independent of $n$, expressed as
\[
 C_1=\frac{2^{\nu+1}K}{\mu}
\left\{
 \frac{2}{(1-\rme^{-\pi\mu\rme/4})\{\cos(\frac{\pi}{2}\sin d)\}^{\nu}\cos d}
 + \rme^{\pi\nu/4}
\right\}.
\]
\end{theorem}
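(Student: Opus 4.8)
The plan is to set $F(x)=f(\DEt{1}(x))\DEtDiv{1}(x)$, so that (by the change of variable $t=\DEt{1}(x)$, which maps $\mathbb{R}$ monotonically onto $I_1$) the quantity to be bounded equals $|\int_{-\infty}^{\infty}F(x)\diff x - h\sum_{k=-M}^{N}F(kh)|$, and then to insert the full (untruncated) trapezoidal sum and split:
\[
\left|\int_{I_1}f(t)\diff t - h\sum_{k=-M}^{N}F(kh)\right|
\le
\left|\int_{-\infty}^{\infty}F(x)\diff x - h\sum_{k=-\infty}^{\infty}F(kh)\right|
+ h\sum_{k=-\infty}^{-M-1}|F(kh)|
+ h\sum_{k=N+1}^{\infty}|F(kh)|.
\]
The first term is the discretization error, the last two the truncation error; each is estimated separately, and the choice~\eqref{eq:Def-DE-h} of $h$ is precisely the one making both resulting bounds of order $\rme^{-2\pi d n/\log(8dn/\mu)}=\rme^{-2\pi d/h}$.

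For the discretization error I would first confirm $F\in\Hone(\domD_d)$: analyticity is inherited from $f$ and $\DEt{1}$, and the $E_1$-bounds force double-exponential decay of $|F|$ along $\Im\zeta=\pm d$, so $\None(F,d)<\infty$. The standard trapezoidal estimate then bounds it by $\None(F,d)\,\rme^{-2\pi d/h}/(1-\rme^{-2\pi d/h})$. The geometric factor is controlled by writing $t=8dn/\mu$ and using $2\pi d/h=(\pi\mu/4)(t/\log t)\ge\pi\mu\rme/4$, valid because $t/\log t$ is increasing with minimum $\rme$ on $t\ge\rme$ while $n\ge\nu\rme/(8d)\ge\mu\rme/(8d)$ forces $t\ge\rme$; hence $1-\rme^{-2\pi d/h}\ge 1-\rme^{-\pi\mu\rme/4}$. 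The core is the explicit estimate of $\None(F,d)=\int_{-\infty}^{\infty}\{|F(\xi+\imnum d)|+|F(\xi-\imnum d)|\}\diff\xi$, whose two boundary lines contribute the factor $2$. On each line I substitute $z=\DEt{1}(\xi\pm\imnum d)$, split at $\xi=0$ to apply~\eqref{leq:Sinc-quad-case1-alpha} on the left and~\eqref{leq:Sinc-quad-case1-beta} on the right, and reduce the integrand to a constant multiple of $|\cosh(\xi\pm\imnum d)|\,|\cosh[(\pi/2)\sinh(\xi\pm\imnum d)]|^{-\gamma}$ for the appropriate $\gamma\in\{\alpha,\beta\}$, using $1+\DEt{1}^2=\cosh^2[(\pi/2)\sinh\,\cdot\,]$. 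The factor $\{\cos(\tfrac{\pi}{2}\sin d)\}^{\nu}$ comes from the uniform lower bound $|\cosh[(\pi/2)\sinh(\xi\pm\imnum d)]|\ge\cos(\tfrac{\pi}{2}\sin d)$, and the factor $\cos d$ emerges when the remaining double-exponentially decaying integral is evaluated, as its decay rate carries $\cos d$.

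For the truncation error I would use the decay hypotheses directly at the nodes. Since $|E_1(\DEt{1}(kh);\gamma)|=\{\cosh[(\pi/2)\sinh(kh)]\}^{-(\gamma+1)}$ and $|\DEtDiv{1}(kh)|=(\pi/2)\cosh(kh)\cosh[(\pi/2)\sinh(kh)]$, each tail term is at most a constant multiple of $\cosh(kh)\,\rme^{-\pi(\gamma/2)\sinh(kh)}$ ($\gamma=\beta$ on the right tail, $\gamma=\alpha$ on the left). The map $x\mapsto\cosh(x)\,\rme^{-\pi(\gamma/2)\sinh x}$ is monotone for $x\ge x_{\gamma/2}$ by Okayama et al.~\cite[Proposition~4.17]{okayama09:_error}, which is exactly why $Mh\ge x_{\alpha/2}$ and $Nh\ge x_{\beta/2}$ are assumed; this monotonicity lets me dominate each tail sum by the corresponding tail integral, producing the $\rme^{\pi\nu/4}$ contribution. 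The asymmetric node counts in~\eqref{eq:Def-DE-Sinc-MN}, through $\lfloor\log(\beta/\alpha)/h\rfloor$, are chosen so the two tails balance at the common rate $\rme^{-2\pi d/h}$.

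Combining the three estimates, factoring out $2^{\nu+1}K/\mu$, and collecting the remaining constants yields the stated $C_1$. The main obstacle is the boundary estimate of $\None(F,d)$ under $\DEt{1}(\zeta)=\sinh[(\pi/2)\sinh\zeta]$: the delicate points are verifying that $\Re\DEt{1}(\xi\pm\imnum d)$ keeps the sign of $\xi$, so the correct exponent ($\alpha$ or $\beta$) applies on each half-line, and extracting the $\xi$-uniform lower bound $|\cosh[(\pi/2)\sinh(\xi\pm\imnum d)]|\ge\cos(\tfrac{\pi}{2}\sin d)$ that produces exactly the factor $\{\cos(\tfrac{\pi}{2}\sin d)\}^{\nu}$.
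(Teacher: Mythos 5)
Your proposal is correct and is essentially the paper's own route: the paper proves this theorem by verifying that $F(\zeta)=f(\DEt{1}(\zeta))\DEtDiv{1}(\zeta)$ belongs to $\LC_{L,R,\alpha,\beta}^{\textDEg}(\domD_d)$ with $L=2^{\nu}K/\{\cos(\tfrac{\pi}{2}\sin d)\}^{(\nu-\mu)/2}$ and $R=2^{\nu}K$ (via $1+\DEt{1}^2(\zeta)=\cosh^2[(\pi/2)\sinh\zeta]$ and Lemma~\ref{lem:DEfunc-estim}) and then invoking the general Theorem~\ref{thm:DE-Sinc-Quad-overall}, whose omitted-by-citation proof is exactly your discretization-plus-truncation split, with the same control of the geometric factor through $2\pi d/h=(\pi\mu/4)\,t/\log t\geq \pi\mu\rme/4$ and the same use of the monotonicity of $\cosh(x)\rme^{-\pi\gamma\sinh x}$ beyond $x_{\gamma}$ for the tails, so your sketch reproduces both the rate and the stated $C_1$. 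One small correction: the ``delicate point'' you flag about $\Re\DEt{1}(\xi\pm\imnum d)$ keeping the sign of $\xi$ is moot, because the hypotheses~\eqref{leq:Sinc-quad-case1-alpha} and~\eqref{leq:Sinc-quad-case1-beta} are assumed on the images $\DEt{1}(\domD_d^{-})$ and $\DEt{1}(\domD_d^{+})$ of the half-strips, so the sign of $\Re\zeta$ alone determines which bound applies and no such verification is needed.
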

\begin{theorem}
\label{thm:DE2-Sinc-Quad-explicit}
Assume that $f$ is analytic in $\DEt{2}(\domD_d)$, and
there exist positive constants $K$, $\alpha$, and $\beta$
such that~\eqref{leq:Sinc-quad-case2-alpha-beta} holds with $i=2$
for all $z\in\DEt{2}(\domD_d)$.
Let $\mu=\min\{\alpha,\,\beta\}$,
let $\nu=\max\{\alpha,\,\beta\}$,
let $h$ be defined as~\eqref{eq:Def-DE-h},
and let $M$ and $N$ be defined as~\eqref{eq:Def-DE-Sinc-MN}.
Furthermore,
let $n$ be taken sufficiently large so that
$n\geq (\nu \rme)/(8 d)$,
$M h \geq x_{\alpha/2}$, and $N h \geq x_{\beta/2}$ hold.
Then it holds that
\begin{equation*}
\left|
\int_{I_2}f(t)\diff t
- h\sum_{k=-M}^N f(\DEt{2}(kh))\DEtDiv{2}(kh)
\right|
\leq C_2\rme^{-2\pi d n/\log(8 d n/\mu)},
\end{equation*}
where $C_2$ is a constant independent of $n$, expressed as
\[
 C_2=\frac{2K}{\mu}
\left\{
 \frac{2}{(1-\rme^{-\pi\mu\rme/4})\{\cos(\frac{\pi}{2}\sin d)\}^{(\alpha+\beta)/2}\cos d}
 + \rme^{\pi\nu/4}
\right\}.
\]
\end{theorem}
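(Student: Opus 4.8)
The plan is to follow the same two-step (discretization plus truncation) scheme as in the proof of Theorem~\ref{thm:DE1-Sinc-Quad-explicit}, replacing $\DEt{1}$ and the weight $E_1$ by $\DEt{2}$ and $E_2$. Writing $g(x)=f(\DEt{2}(x))\DEtDiv{2}(x)$, I would first split the error as
\[
\left|\int_{I_2}f(t)\diff t-h\sum_{k=-M}^{N}g(kh)\right|
\le
\underbrace{\left|\int_{-\infty}^{\infty}g(x)\diff x-h\sum_{k=-\infty}^{\infty}g(kh)\right|}_{\text{(discretization)}}
+\underbrace{h\sum_{k=-\infty}^{-M-1}|g(kh)|}_{\text{(left truncation)}}
+\underbrace{h\sum_{k=N+1}^{\infty}|g(kh)|}_{\text{(right truncation)}}.
\]

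For the discretization term I would invoke the classical Sinc-quadrature (trapezoidal) estimate for functions in the Hardy space $\Hone(\domD_d)$, which bounds it by $\None(g,d)\,\rme^{-2\pi d/h}/(1-\rme^{-2\pi d/h})$, where $\None(g,d)$ denotes the total mass of $|g|$ on the two boundary lines $\Im\zeta=\pm d$ (the bound on $|f|$ being applied on $\Im\zeta=d'<d$ and then passing $d'\uparrow d$). Two estimates are then needed. First, the prefactor is controlled by the hypothesis $n\ge\nu\rme/(8d)$: setting $t=8dn/\mu$ one has $2\pi d/h=\pi\mu t/(4\log t)$ and $t/\log t\ge\rme$ for all $t>1$, whence $2\pi d/h\ge\pi\mu\rme/4$ and $1/(1-\rme^{-2\pi d/h})\le 1/(1-\rme^{-\pi\mu\rme/4})$. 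Second, one must estimate $\None(g,d)$ explicitly; since $|g(x-\imnum d)|$ and $|g(x+\imnum d)|$ admit the same majorant by the conjugate symmetry of $E_2$ and $\DEt{2}$, it suffices to bound $\int_{-\infty}^{\infty}|g(x+\imnum d)|\diff x$ and double it.

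The crux is this last integral. On $\Im\zeta=d$ one computes $|\DEt{2}(x+\imnum d)|=\rme^{(\pi/2)\cos d\,\sinh x}=:r$, $\arg\DEt{2}(x+\imnum d)=(\pi/2)\sin d\,\cosh x$, and $|\DEtDiv{2}(x+\imnum d)|\le(\pi/2)\cosh x\,\rme^{(\pi/2)\cos d\,\sinh x}$. Feeding these into $|f|\le K|E_2(\cdot;\alpha,\beta)|$ requires a lower bound for $|1+\DEt{2}(\zeta)^2|$, and the key lemma I expect to establish — this is the main obstacle — is
\[
|1+\DEt{2}(\zeta)^2|\ge\cos\!\left(\tfrac{\pi}{2}\sin d\right)\max\{1,\,|\DEt{2}(\zeta)|^{2}\}\qquad(\zeta\in\domD_d),
\]
which quantifies how closely the image approaches the singularity of $E_2$ at $z=\imnum$, the nearest approach occurring near $x=0$. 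The difficulty is that $\arg\DEt{2}(x+\imnum d)$ grows without bound in $x$, so $1+\DEt{2}(\zeta)^2$ oscillates; the bound must nonetheless be uniform, which forces one to combine $|1+z^2|\ge\bigl|1-|z|^2\bigr|$ away from $|z|=1$ with the near-singular behaviour around $x=0$ and to locate the genuine worst case. Granting this, $r^{\alpha}/\max\{1,r^2\}^{(\alpha+\beta)/2}$ equals $r^{-\beta}$ for $x\ge0$ and $r^{\alpha}$ for $x\le0$, so the integral splits at $x=0$ into two elementary exponential integrals (substitute $u=\sinh x$) yielding $1/(\beta\cos d)$ and $1/(\alpha\cos d)$; with $1/\alpha+1/\beta\le 2/\mu$ this gives $\None(g,d)\le 4K/\bigl(\mu\{\cos(\tfrac{\pi}{2}\sin d)\}^{(\alpha+\beta)/2}\cos d\bigr)$, which is exactly the discretization contribution to $C_2$.

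Finally, for the two tail sums I would use that $\cosh(x)\rme^{-\pi\gamma\sinh x}$ is monotone for $x\ge x_{\gamma}$ — with $\gamma=\beta/2$ on the right (justified by $Nh\ge x_{\beta/2}$) and $\gamma=\alpha/2$ on the left ($Mh\ge x_{\alpha/2}$) — so each tail sum is dominated by the integral $\int_{Nh}^{\infty}$, respectively $\int_{-\infty}^{-Mh}$, of the majorant $K(\pi/2)\cosh x\,\rme^{\mp(\pi\beta/2)\sinh x}$; evaluating gives $(K/\beta)\rme^{-(\pi\beta/2)\sinh(Nh)}$ and $(K/\alpha)\rme^{-(\pi\alpha/2)\sinh(Mh)}$. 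The choice of $M,N$ in~\eqref{eq:Def-DE-Sinc-MN} gives $\rme^{Nh}\ge 8dn/\beta$ and $\rme^{Mh}\ge 8dn/\alpha$ after discarding the floor, so with $\sinh(\log s)\ge(s-1)/2$ each exponent is $\le-2\pi dn+\pi\nu/4$; since $\log(8dn/\mu)\ge1$ we have $-2\pi dn\le-2\pi d/h$, and hence both truncation terms are bounded by $(K/\mu)\rme^{\pi\nu/4}\rme^{-2\pi d/h}$. Adding the discretization and truncation bounds and recalling $\rme^{-2\pi d/h}=\rme^{-2\pi dn/\log(8dn/\mu)}$ produces precisely $C_2\,\rme^{-2\pi dn/\log(8dn/\mu)}$.
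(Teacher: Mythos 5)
Your proposal is correct and is in substance the paper's own argument: the paper proves this theorem by observing that $E_2(\DEt{2}(\zeta);\alpha,\beta)\DEtDiv{2}(\zeta)$ simplifies \emph{exactly} to $(\pi/2)\cosh\zeta/\bigl((1+\rme^{-\pi\sinh\zeta})^{\alpha/2}(1+\rme^{\pi\sinh\zeta})^{\beta/2}\bigr)$, so that $F=f(\DEt{2}(\cdot))\DEtDiv{2}(\cdot)$ lies in $\LC_{K,K,\alpha,\beta}^{\textDEg}(\domD_d)$, and then invoking the general Theorem~\ref{thm:DE-Sinc-Quad-overall}, whose omitted proof (cited from the case-4 analogue) is precisely your discretization-plus-truncation scheme with the same constants, thresholds $x_{\alpha/2}$, $x_{\beta/2}$, and arithmetic via $n\geq\nu\rme/(8d)$. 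Also, the lower bound you single out as the main obstacle needs no new work: it is exactly Lemma~\ref{lem:DEfunc-estim}, since $|1+\rme^{\pi\sinh\zeta}|\geq(1+\rme^{\pi\sinh(x)\cos y})\cos(\tfrac{\pi}{2}\sin y)\geq\cos(\tfrac{\pi}{2}\sin d)\max\{1,|\DEt{2}(\zeta)|^{2}\}$.
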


As for case 3 (Theorem~\ref{thm:DE1-Sinc-Quad} with $i=3$
and Theorem~\ref{Thm:DEt3-Sinc-quad}),
this paper employs the improved variable transformation
$\DEt{3$\ddagger$}$ as described in the introduction, and
gives the error estimates
in a form similar to Theorems~\ref{thm:DE1-Sinc-Quad-explicit}
and~\ref{thm:DE2-Sinc-Quad-explicit}.
\begin{theorem}
\label{thm:DE3-Sinc-Quad-explicit}
Assume that $f$ is analytic in $\DEt{3$\ddagger$}(\domD_d)$, and
there exist positive constants $K$, $\beta$,
and $\alpha$ with $\alpha\leq 1$
such that~\eqref{leq:Sinc-quad-case2-alpha-beta} holds with $i=3$
for all $z\in\DEt{3$\ddagger$}(\domD_d)$.
Let $\mu=\min\{\alpha,\,\beta\}$,
let $\nu=\max\{\alpha,\,\beta\}$,
let $h$ be defined as
\begin{equation}
h=\frac{\log(4 d n/\mu)}{n},
\label{eq:Def-DE-h-half}
\end{equation}
and let $M$ and $N$ be defined as~\eqref{eq:Def-DE-Sinc-MN}.
Furthermore,
let $n$ be taken sufficiently large so that
$n\geq (\nu \rme)/(4 d)$,
$M h \geq x_{\alpha}$, and $N h \geq x_{\beta}$ hold.
Then it holds that
\begin{equation*}
\left|
\int_{I_3}f(t)\diff t
- h\sum_{k=-M}^N f(\DEt{3$\ddagger$}(kh))\DEtDiv{3$\ddagger$}(kh)
\right|
\leq C_{3\ddagger}\rme^{-2\pi d n/\log(4 d n/\mu)},
\end{equation*}
where $C_{3\ddagger}$ is a constant independent of $n$, expressed as
\begin{align}
 C_{3\ddagger}&=\frac{2K}{\mu}
\left\{
 \frac{2(\tilde{c}_{d})^{1-\alpha}}{(1-\rme^{-\pi\mu\rme/2})\{\cos(\frac{\pi}{2}\sin d)\}^{\alpha+\beta}\cos d}
 + 
\rme^{\pi(1-\alpha+6\nu)/12}
\right\},\nonumber
\intertext{and where $\tilde{c}_{d}$ is a constant
expressed by using $c_d=1+\{{1}/{\cos(\frac{\pi}{2}\sin d)}\}$ as}
 \tilde{c}_{d}&=
\frac{1+\log(1+c_d)}{\log(1+c_d)}c_d.\label{tilde-c-alpha-d}
\end{align}
\end{theorem}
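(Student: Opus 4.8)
The plan is to follow the same two-step scheme that underlies Theorems~\ref{thm:DE1-Sinc-Quad-explicit} and~\ref{thm:DE2-Sinc-Quad-explicit}. Writing $\psi=\DEt{3$\ddagger$}$ and $g(\zeta)=f(\psi(\zeta))\psi'(\zeta)$, so that $\int_{I_3}f(t)\diff t=\int_{-\infty}^{\infty}g(x)\diff x$, I would split the error by inserting and removing the full trapezoidal sum:
\begin{align*}
\left|\int_{-\infty}^{\infty}g(x)\diff x-h\sum_{k=-M}^{N}g(kh)\right|
&\leq\left|\int_{-\infty}^{\infty}g(x)\diff x-h\sum_{k=-\infty}^{\infty}g(kh)\right|\\
&\quad+h\sum_{k=-\infty}^{-M-1}|g(kh)|+h\sum_{k=N+1}^{\infty}|g(kh)|.
\end{align*}
The first term is the \emph{discretization error} and the last two are the left and right \emph{truncation errors}. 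For the discretization error I would invoke the standard estimate for the trapezoidal rule applied to functions of class $\Hone(\domD_d)$ (Stenger~\cite{stenger93:_numer}, in the explicit form reproduced in Section~\ref{sec:proofs}), whose leading factor is $\rme^{-2\pi d/h}$ times a geometric factor $1/(1-\rme^{-2\pi d/h})$; substituting $h$ from~\eqref{eq:Def-DE-h-half} turns $\rme^{-2\pi d/h}$ into the asserted rate $\rme^{-2\pi dn/\log(4dn/\mu)}$.

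The heart of the argument, and the step I expect to be hardest, is to verify that $g\in\Hone(\domD_d)$ and to bound its norm with explicit constants. First, analyticity in the strip requires $1+\rme^{\pi\sinh\zeta}\neq0$ throughout $\domD_d$: the equation $\sinh\zeta=\imnum(2k+1)$ would force $\Re\zeta=0$ and $\sin(\Im\zeta)=2k+1$, which is impossible once $|\Im\zeta|<\pi/2$; this is exactly where the hypothesis $d<\pi/2$ enters, making $\psi=\log(1+\rme^{\pi\sinh\zeta})$ analytic there. Second, using $|f(z)|\leq K|E_3(z;\alpha,\beta)|$ reduces the norm bound to estimating $|E_3(\psi(\zeta);\alpha,\beta)\,\psi'(\zeta)|$ on the lines $\Im\zeta=\pm d$ and integrating, which I would carry out by splitting according to $\Re\zeta<0$ and $\Re\zeta\geq0$ (i.e.\ over $\domD_d^{-}$ and $\domD_d^{+}$) to match the two-sided structure of $E_3$. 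Here I would use $|\rme^{-\beta\psi(\zeta)}|=|1+\rme^{\pi\sinh\zeta}|^{-\beta}$, the hypothesis $\alpha\leq1$ to control $|(\psi/(1+\psi))^{\alpha-1}|$, and the explicit derivative $\psi'(\zeta)=\pi\cosh\zeta\,\rme^{\pi\sinh\zeta}/(1+\rme^{\pi\sinh\zeta})$. The binding constraint is a lower bound for $|1+\rme^{\pi\sinh\zeta}|$, whose smallest value in the dangerous region near $\zeta=\pm\imnum d$ equals $|1+\rme^{\imnum\pi\sin d}|=2\cos(\tfrac{\pi}{2}\sin d)$ (finite precisely because $\pi\sin d<\pi$); this is the source of the factors $\{\cos(\tfrac{\pi}{2}\sin d)\}^{\alpha+\beta}$, of $c_d=1+1/\cos(\tfrac{\pi}{2}\sin d)$, and hence of $\tilde c_d$ in~\eqref{tilde-c-alpha-d}, while the remaining factor $\cos d$ comes from bounding $|\cosh\zeta|$ and thus $|\psi'|$. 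Obtaining these estimates in a form that is uniform along the lines and produces exactly the stated constants is the main technical obstacle.

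For the two truncation sums I would use, on the real axis, $|g(x)|\leq K|E_3(\psi(x);\alpha,\beta)|\psi'(x)$. Because $\psi$ is of double-exponential type, a short computation shows that $|g(x)|$ behaves like $\cosh(x)\rme^{\pi\alpha\sinh x}$ as $x\to-\infty$ and like $\cosh(x)\rme^{-\pi\beta\sinh x}$ as $x\to+\infty$, so it decays double-exponentially at both ends. The conditions $Mh\geq x_{\alpha}$ and $Nh\geq x_{\beta}$ guarantee that $\cosh(x)\rme^{\pm\pi\gamma\sinh x}$ is already monotone on the truncated tails (the defining property of $x_{\gamma}$), so each tail sum can be majorised by its first omitted term times a geometric series, or by a tail integral, giving a closed-form bound. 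With the chosen $h$, $M$, and $N$, both tails turn out to be $\Order(\rme^{-2\pi dn})$, which after bounding $\rme^{-2\pi dn}\leq\rme^{-2\pi dn/\log(4dn/\mu)}$ is dominated by the discretization rate; tracking the explicit constants here contributes the second term $\rme^{\pi(1-\alpha+6\nu)/12}$ of $C_{3\ddagger}$.

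Finally I would substitute $h=\log(4dn/\mu)/n$ and the balanced choice~\eqref{eq:Def-DE-Sinc-MN} of $M$ and $N$, whose shift by $\lfloor\log(\beta/\alpha)/h\rfloor$ equalises the two tails at the common rate $\mu=\min\{\alpha,\beta\}$. The largeness assumption $n\geq\nu\rme/(4d)$ makes the thresholds $Mh\geq x_{\alpha}$ and $Nh\geq x_{\beta}$ attainable and validates the logarithmic arguments; moreover it lets me replace the geometric factor $1/(1-\rme^{-2\pi d/h})$ by the constant $1/(1-\rme^{-\pi\mu\rme/2})$, using the elementary inequality $u/\log u\geq\rme$ (minimal at $u=\rme$) with $u=4dn/\mu\geq\rme$. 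Combining the discretization contribution (the first term of $C_{3\ddagger}$, carrying the $\cos(\tfrac{\pi}{2}\sin d)$, $\cos d$, and $\tilde c_d$ factors) with the truncation contribution (the second term) then yields the stated bound.
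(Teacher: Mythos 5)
Your overall architecture --- discretization error plus two truncation tails, strip-boundary estimates for the analytic part, the monotonicity thresholds $x_{\alpha}$, $x_{\beta}$ for bounding the tails by integrals, and the replacement of the geometric factor by $1/(1-\rme^{-\pi\mu\rme/2})$ via $u/\log u\geq\rme$ with $u=4dn/\mu$ --- is exactly the machinery the paper relies on. The paper merely packages it: Theorem~\ref{thm:DE-Sinc-Quad-overall} (proof omitted, by reference to the finite-interval case) is applied with the \emph{doubled} indices via Lemma~\ref{lem:DE3-Sinc-Quad-check}, which places $F$ in $\LC_{L,R,2\alpha,2\beta}^{\textDEg}(\domD_d)$; substituting $2\mu$, $2\nu$ for $\mu$, $\nu$ there turns $8dn/\mu$ into $4dn/\mu$, $\rme^{-\pi\mu\rme/4}$ into $\rme^{-\pi\mu\rme/2}$, and $x_{\alpha/2}$ into $x_{\alpha}$. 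Your direct balancing of $\rme^{-2\pi d/h}$ against tails decaying at the full rates $\alpha$, $\beta$ achieves the same bookkeeping, and your tail and rate computations are sound.

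The genuine gap is the step you yourself flag as ``the main technical obstacle,'' because that step \emph{is} the paper's proof. The factor $|(z/(1+z))^{\alpha-1}|$ with $z=\psi(\zeta)=\log(1+\rme^{\pi\sinh\zeta})$ is not controlled by a lower bound on $|1+\rme^{\pi\sinh\zeta}|$ alone, as your sketch suggests: the quantity $|(1+\psi)/\psi|^{1-\alpha}$ blows up as $\Re\zeta\to-\infty$, where $\psi\to 0$. The missing idea (Lemma~\ref{lem:log-exp}) is to pair it with the vanishing factor $1/(1+\rme^{-\pi\sinh\zeta})$ peeled off from the target bound and to estimate the \emph{product} $p=\frac{1+\psi}{\psi}\cdot\frac{1}{1+\rme^{-\pi\sinh\zeta}}$ as a single analytic quantity: by the maximum modulus principle and the bound $\Re\log(1+\rme^{\pi\sinh(x+\imnum d)})\geq\log\cos(\frac{\pi}{2}\sin d)$ (from Lemma~\ref{lem:DEfunc-estim}), a two-case argument in $\xi=\log(1+\rme^{\pi\sinh(x+\imnum d)})$ --- using the power-series estimate $|p|\leq\frac{1+|\xi|}{|\xi|}(\rme^{|\xi|}-1)$ for $|\xi|\leq\log(1+c_d)$ and $|p|\leq\frac{1+|\xi|}{|\xi|}(1+\rme^{-\Re\xi})$ otherwise --- yields exactly $\tilde{c}_{d}$ of~\eqref{tilde-c-alpha-d}. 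Moreover, on the real axis an explicit \emph{uniform} bound is required, not the asymptotic ``behaves like'' in your tail discussion: the calculus fact $\sup_{t\geq0}\frac{1+t}{t}(1-\rme^{-t})=1+\lambda\rme^{-\lambda}<1+\frac{\log 6}{6}<\rme^{\pi/12}$ (inequality~\eqref{ineq:log-exp-real}) supplies the factor $(\rme^{\pi/12})^{1-\alpha}$, without which the stated exponent $\pi(1-\alpha+6\nu)/12$ --- and hence the explicit $C_{3\ddagger}$, the entire content of the theorem beyond the known qualitative rate --- cannot be obtained. A minor further point: the split over $\domD_d^{-}$ and $\domD_d^{+}$ you propose is unnecessary here, since~\eqref{leq:Sinc-quad-case2-alpha-beta} is assumed on the whole of $\DEt{3$\ddagger$}(\domD_d)$; that two-sided structure belongs to case~1.
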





\section{Error Estimates with Explicit Constants for the Sinc Indefinite Integration}
\label{sec:Sinc-indef-estimates}

Sinc indefinite integration
is an approximation formula for the indefinite integral~\cite{haber93:_two},
expressed as
\begin{equation}
 \int_{-\infty}^{\xi}F(x)\diff x
\approx\sum_{k=-M}^N F(kh)J(k,h)(\xi),\quad \xi\in\mathbb{R}.
\label{eq:Def-Sinc-Indef}
\end{equation}
Here, the basis function $J(k,h)$ is defined by
\begin{equation*}
\label{Def:func-J}
J(k,h)(x)
=h\left\{\frac{1}{2}+\frac{1}{\pi}\Si[\pi(x/h-k)]\right\},
\end{equation*}
where $\Si(x)$ is the so-called sine integral,
defined by
$\Si(x)=\int_0^x\{\sin(\sigma)/\sigma\}\diff\sigma$.
The approximation~\eqref{eq:Def-Sinc-Indef}
can be combined with
the SE transformation
or the DE transformation~\cite{muhammad03:_doubl}
similar to the Sinc quadrature~\eqref{approx:Sinc-quad}.
This section
presents the error estimates for those formulas.

\subsection{New Error Estimates for the Sinc Indefinite Integration with the SE Transformation}
\label{subsec:main-SE-Sinc-Indef}

This paper gives new error estimates for the Sinc indefinite integration
with $\SEt{1}$, $\SEt{2}$, and $\SEt{3}$ in the following form
(Theorems~\ref{thm:SE1-Sinc-Indef} and \ref{thm:SE2-Sinc-Indef}).

\begin{theorem}
\label{thm:SE1-Sinc-Indef}
Assume that $f$ is analytic in $\SEt{1}(\domD_d)$, and
there exist positive constants $K$, $\alpha$, and $\beta$
such that~\eqref{leq:Sinc-quad-case1-alpha} holds
for all $z\in\SEt{1}(\domD_d^{-})$,
and~\eqref{leq:Sinc-quad-case1-beta} holds
for all $z\in\SEt{1}(\domD_d^{+})$.
Let $\mu=\min\{\alpha,\,\beta\}$,
let $\nu=\max\{\alpha,\,\beta\}$,
let $h$ be defined as
\begin{equation}
h=\sqrt{\frac{\pi d}{\mu n}},
\label{eq:Def-SE-h-indef}
\end{equation}
and let $M$ and $N$ be defined as~\eqref{eq:Def-SE-Sinc-MN}.
Then, it holds that
\begin{equation*}
\sup_{\tau\in I_1}\left|
\int_{-\infty}^{\tau}f(t)\diff t
- \sum_{k=-M}^N f(\SEt{1}(kh))\SEtDiv{1}(kh)J(k,h)(\SEtInv{1}(\tau))
\right|
\leq C_1\rme^{-\sqrt{\pi d \mu n}},
\end{equation*}
where $C_1$ is a constant independent of $n$, expressed as
\begin{align*}
 C_1&=\frac{2^{\nu+1}K}{\mu}
\left\{
 \frac{1}{(1-\rme^{-2\sqrt{\pi d \mu}})\{\cos d\}^{\nu}}
 \sqrt{\frac{\pi}{d \mu}}
 + 1.1
\right\}.
\end{align*}
\end{theorem}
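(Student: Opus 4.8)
The plan is to reduce the claim to a uniform Sinc indefinite integration error estimate for the transformed integrand and then split that error into a discretization part and two truncation parts. First I would set $g(x)=f(\SEt{1}(x))\SEtDiv{1}(x)=f(\sinh x)\cosh x$ and, writing $\xi=\SEtInv{1}(\tau)$, use $\int_{-\infty}^{\tau}f(t)\diff t=\int_{-\infty}^{\xi}g(x)\diff x$ to recast the quantity inside the supremum as the Sinc indefinite integration error for $g$ at the point $\xi$. Adding and subtracting the full infinite sum, I would decompose it as
\[
\left(\int_{-\infty}^{\xi}g\diff x-\sum_{k=-\infty}^{\infty}g(kh)J(k,h)(\xi)\right)+\sum_{k=-\infty}^{-M-1}g(kh)J(k,h)(\xi)+\sum_{k=N+1}^{\infty}g(kh)J(k,h)(\xi),
\]
i.e. a discretization error plus a left and a right truncation error, each to be bounded uniformly in $\xi$.

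Next I would control $g$ on the strip. Since $1+\sinh^2\zeta=\cosh^2\zeta$, one has $E_1(\SEt{1}(\zeta);\gamma)\SEtDiv{1}(\zeta)=1/\cosh^{\gamma}\zeta$, and the identity $|\cosh(x+\imnum y)|^2=\cosh^2 x-\sin^2 y$ gives $|\cosh\zeta|\geq\cos d\,\cosh(\Re\zeta)$ throughout $\domD_d$ (using $|\Im\zeta|\le d<\pi/2$). Combined with the hypotheses \eqref{leq:Sinc-quad-case1-alpha} and \eqref{leq:Sinc-quad-case1-beta}, this yields $|g(\zeta)|\le K/(\{\cos d\}^{\alpha}\cosh^{\alpha}(\Re\zeta))$ on $\domD_d^{-}$ and the analogous bound with $\beta$ on $\domD_d^{+}$. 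In particular $g$ lies in the Hardy space $\Hone(\domD_d)$, and estimating the boundary integral via $\cosh^{-\gamma}x\le 2^{\gamma}\rme^{-\gamma|x|}$ and $\gamma\ge\mu$ bounds its norm $\None(g,\domD_d)$ by a multiple of $2^{\nu}K/(\mu\{\cos d\}^{\nu})$.

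For the discretization error I would invoke the foundational $\Hone(\domD_d)$ estimate for Sinc indefinite integration—which I would prove separately from the residue/contour-integral representation of the Sinc interpolation error, integrated uniformly in $\xi$—of the shape $\text{const}\cdot h\,\None(g,\domD_d)\,\rme^{-\pi d/h}/(1-\rme^{-2\pi d/h})$. Substituting $h=\sqrt{\pi d/(\mu n)}$ turns $\rme^{-\pi d/h}$ into $\rme^{-\sqrt{\pi d\mu n}}$, the advertised rate. To make the prefactor independent of $n$, I would bound the two monotone factors at $n=1$, namely $h/d\le\sqrt{\pi/(d\mu)}$ and $1/(1-\rme^{-2\pi d/h})\le 1/(1-\rme^{-2\sqrt{\pi d\mu}})$, which reproduces the first term of $C_1$.

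Finally, for the truncation errors I would use the uniform-in-$\xi$ bound $|J(k,h)(\xi)|\le 1.1\,h$, coming from $\max_x|\tfrac12+\tfrac1\pi\Si(x)|\le 1.1$ (attained near $x=\pi$), together with the geometric tail sums $\sum_{k>N}|g(kh)|$ and $\sum_{k<-M}|g(kh)|$ estimated through $\cosh^{-\gamma}\le 2^{\gamma}\rme^{-\gamma|\cdot|}$; the choice of $M$ and $N$ in \eqref{eq:Def-SE-Sinc-MN} makes both tails decay at the common rate $\rme^{-\sqrt{\pi d\mu n}}$ and produces the $+1.1$ term of $C_1$. I expect the discretization step to be the main obstacle: establishing the underlying $\Hone$-estimate with a genuinely computable constant that is uniform in $\xi$, and then carefully propagating the $h$- and $\cos d$-dependence through $\None(g,\domD_d)$, is where the real work lies, whereas once $M$ and $N$ are chosen to balance the two tails the truncation bounds are comparatively routine.
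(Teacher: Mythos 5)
Your proposal is correct and takes essentially the same route as the paper: the paper verifies (in an unnumbered lemma) that $F(\zeta)=f(\SEt{1}(\zeta))\SEtDiv{1}(\zeta)$ belongs to $\LC_{L,R,\alpha,\beta}^{\textSEg}(\domD_d)$ with $L=2^{\nu}K/\{\cos d\}^{(\nu-\mu)/2}$ and $R=2^{\nu}K$, and then applies the general estimate of Theorem~\ref{thm:SE-Sinc-Indef-overall}, whose omitted proof (cited from the finite-interval paper) is exactly your discretization-plus-truncation decomposition with the uniform bound $|J(k,h)(\xi)|\leq 1.1h$ and geometric tail sums balanced by the choice~\eqref{eq:Def-SE-Sinc-MN}. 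Your only deviations are cosmetic—bounding the strip decay via $|\cosh\zeta|\geq\cos d\,\cosh(\Re\zeta)$ rather than the paper's factorization through $|1+\rme^{-2\zeta}|^{\alpha/2}|1+\rme^{2\zeta}|^{\beta/2}$, and re-deriving the underlying $\Hone$ estimate instead of citing it—and they reproduce the same constant $C_1$.
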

\begin{theorem}
\label{thm:SE2-Sinc-Indef}
The following is true for $i=2$, $3$.
Assume that $f$ is analytic in $\SEt{$i$}(\domD_d)$, and
there exist positive constants $K$, $\alpha$, and $\beta$
such that~\eqref{leq:Sinc-quad-case2-alpha-beta} holds
for all $z\in\SEt{$i$}(\domD_d)$.
Let $\mu=\min\{\alpha,\,\beta\}$,
let $h$ be defined as~\eqref{eq:Def-SE-h-indef},
and let $M$ and $N$ be defined as~\eqref{eq:Def-SE-Sinc-MN}.
Then, it holds that
\begin{equation*}
\sup_{\tau\in I_i}\left|
\int_{0}^{\tau}f(t)\diff t
- \sum_{k=-M}^N f(\SEt{$i$}(kh))\SEtDiv{$i$}(kh)J(k,h)(\SEtInv{$i$}(\tau))
\right|
\leq C_i\rme^{-\sqrt{\pi d \mu n}},
\end{equation*}
where $C_2$ and $C_3$ are constants independent of $n$, expressed as
\begin{align*}
 C_2&=\frac{2K}{\mu}
\left\{
 \frac{1}{(1-\rme^{-2\sqrt{\pi d \mu}})\{\cos d\}^{(\alpha+\beta)/2}}
 \sqrt{\frac{\pi}{d \mu}}
 + 1.1
\right\},\\
 C_3&=\frac{2K}{\mu}
\left\{
 \frac{2^{1+(\beta/2)}c_{\alpha,d}}{(1-\rme^{-2\sqrt{\pi d \mu}})\{\cos d\}^{(\alpha+\beta)/2}}
 \sqrt{\frac{\pi}{d \mu}}
 + 1.1\cdot 2^{(1-\alpha+|1-\alpha|)/2}
\right\},
\end{align*}
and where $c_{\alpha,d}$ is a constant defined in~\eqref{eq:def-c_alpha_d}.
\end{theorem}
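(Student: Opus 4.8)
The plan is to handle $i=2$ and $i=3$ simultaneously and to follow, step for step, the argument already carried out for the Sinc quadrature in Theorem~\ref{thm:SE2-Sinc-Quad-Explicit}, changing only the two ingredients that genuinely differ. First I would set $\psi=\SEt{$i$}$, $g=(f\circ\psi)\,\psi'$ and $\sigma=\psi^{-1}(\tau)$. Since $\psi$ maps $(-\infty,\infty)$ onto $I_i$ with $\psi(x)\to0$ as $x\to-\infty$, the substitution $t=\psi(x)$ gives $\int_0^\tau f(t)\diff t=\int_{-\infty}^\sigma g(x)\diff x$ and turns each summand into $g(kh)\,J(k,h)(\sigma)$. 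Thus the quantity to be bounded equals $\sup_{\sigma\in\mathbb{R}}|\int_{-\infty}^\sigma g(x)\diff x-\sum_{k=-M}^N g(kh)J(k,h)(\sigma)|$, which I would split by the triangle inequality into a \emph{discretization error} (the same expression with the doubly-infinite sum $\sum_{k=-\infty}^\infty$) and a \emph{truncation error} (the tails $k\le -M-1$ and $k\ge N+1$). Before applying the discretization lemma one must check $g\in\Hone(\domD_d)$: analyticity is inherited from $f$ and $\psi$, and finiteness of the boundary norm follows from the decay hypothesis.

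For the discretization error I would invoke the fundamental lemma for Sinc indefinite integration, the counterpart of the quadrature sampling lemma (which bounds $|\int g-h\sum_k g(kh)|$ by $\None(g,d)\,\rme^{-2\pi d/h}/(1-\rme^{-2\pi d/h})$, where $\None(g,d)=\int_{-\infty}^\infty(|g(x+\imnum d)|+|g(x-\imnum d)|)\diff x$). The decisive structural difference is that indefinite integration is the \emph{integral} of the cardinal-series (Sinc interpolation) error $g-\sum_k g(kh)S(k,h)$, whose term-by-term primitive is $\sum_k g(kh)J(k,h)$; integrating its oscillatory factor $\sin(\pi t/h)$ once over $(-\infty,\sigma]$ produces both the halved exponent $\rme^{-\pi d/h}$ (in place of $\rme^{-2\pi d/h}$) and an extra factor proportional to $h$, so that the lemma takes the shape $\None(g,d)\,h/(4d\sinh(\pi d/h))$. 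This is precisely why $h$ is chosen here as $\sqrt{\pi d/(\mu n)}$ rather than $\sqrt{2\pi d/(\mu n)}$, making $\rme^{-\pi d/h}=\rme^{-\sqrt{\pi d\mu n}}$. I would then reuse the boundary estimate from the quadrature proof: on $\Im\zeta=\pm d$ the modulus inequality $|1+\psi(\zeta)^2|\ge(\cos d)(1+|\psi(\zeta)|^2)$ yields the factor $\{\cos d\}^{(\alpha+\beta)/2}$, while $\int_{-\infty}^\infty\rme^{\alpha x}/(1+\rme^{2x})^{(\alpha+\beta)/2}\diff x\le 1/\alpha+1/\beta\le 2/\mu$ yields the $K/\mu$ factor, so that $\None(g,d)\le 4K/(\mu\{\cos d\}^{(\alpha+\beta)/2})$ when $i=2$ (and the analogous bound with $c_{\alpha,d}$ of~\eqref{eq:def-c_alpha_d} and $2^{1+\beta/2}$ when $i=3$). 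Substituting $h=\sqrt{\pi d/(\mu n)}$ and using $n\ge1$ to bound $h/d=\sqrt{\pi/(d\mu n)}\le\sqrt{\pi/(d\mu)}$ and $1-\rme^{-2\pi d/h}\ge 1-\rme^{-2\sqrt{\pi d\mu}}$ then reproduces exactly the first term inside the braces of $C_i$; the factor $\sqrt{\pi/(d\mu)}$ is the surviving footprint of the step size $h$.

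For the truncation error the only change relative to the quadrature argument is that the weight $h$ is replaced by $J(k,h)(\sigma)$, which I would bound uniformly by $|J(k,h)(\sigma)|\le h\{\tfrac12+\Si(\pi)/\pi\}<1.1\,h$, using $|\tfrac12+\tfrac1\pi\Si(y)|\le\tfrac12+\Si(\pi)/\pi$. The tails are then dominated by $1.1\,h\sum_{k\notin[-M,N]}|g(kh)|$. Estimating $|g(kh)|\le K\rme^{\alpha kh}$ on the left and $|g(kh)|\le K\rme^{-\beta kh}$ on the right (for $i=3$ with the extra constant $2^{(1-\alpha+|1-\alpha|)/2}$ absorbing the $(z/(1+z))^{\alpha-1}$ factor), summing each geometric series with the elementary inequality $h/(\rme^{\gamma h}-1)\le 1/\gamma$, and using the balancing choice~\eqref{eq:Def-SE-Sinc-MN} of $M$ and $N$ (for which both $\rme^{-\alpha Mh}$ and $\rme^{-\beta Nh}$ are at most $\rme^{-\sqrt{\pi d\mu n}}$) gives a tail bound $(2K/\mu)\cdot1.1\,\rme^{-\sqrt{\pi d\mu n}}$. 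This is exactly the $1.1$ term (respectively $1.1\cdot2^{(1-\alpha+|1-\alpha|)/2}$ for $i=3$), and adding the two contributions yields $C_i$.

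I expect the discretization step, and within it the exact tracking of the constant, to be the main obstacle. Bounding the doubly-infinite error by $\None(g,d)$ via the contour representation of the cardinal-series error is routine, but showing that integration over $(-\infty,\sigma]$ costs \emph{exactly one} power of $h$—so that the constant $1/(4d)$ and hence the clean factor $\sqrt{\pi/(d\mu)}$ come out correctly—requires careful handling of the oscillatory factor $\sin(\pi t/h)$ (a single integration by parts, controlling both the boundary term and the total variation of the slowly-varying remainder). The case $i=3$ is a secondary difficulty: because $\SEt{3}(\zeta)=\arcsinh(\rme^\zeta)$, both the verification $g\in\Hone(\domD_d)$ and the boundary bound as $\Re\zeta\to-\infty$ (where $z/(1+z)$ is raised to the possibly negative power $\alpha-1$) are more delicate than for $\SEt{2}(\zeta)=\rme^\zeta$; I would import those estimates, together with the constants $c_{\alpha,d}$ and $2^{1+\beta/2}$, verbatim from the already-proved quadrature Theorem~\ref{thm:SE2-Sinc-Quad-Explicit}.
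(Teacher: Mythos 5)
Your proposal is correct and takes essentially the same route as the paper: your discretization-plus-truncation splitting, with the indefinite-integration lemma of shape $h\,\None(g,d)/(4d\sinh(\pi d/h))$ and the uniform bound $|J(k,h)(\sigma)|\leq 1.1h$, is exactly the content of the paper's Theorem~\ref{thm:SE-Sinc-Indef-overall} (whose proof the paper omits by reference to the case-4 result of Okayama et al.), and your boundary and real-line estimates for $g$ when $i=2,3$, including the constants $c_{\alpha,d}$ and $2^{(1-\alpha+|1-\alpha|)/2}$, coincide with the paper's membership lemmas for the class $\LC_{L,R,\alpha,\beta}^{\textSEg}(\domD_d)$. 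The only cosmetic difference is that the paper packages those estimates as membership in an explicitly defined function space before invoking the general theorem, whereas you inline them directly.
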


\begin{remark}
This paper addresses
the indefinite integration formulas based on~\eqref{eq:Def-Sinc-Indef}
developed by Haber~\cite{haber93:_two}.
Haber developed his formula for case 4,
but did not develop any formula for cases~1-3.

Other indefinite integration formulas
with $\SEt{1}$, $\SEt{2}$, and $\SEt{3}$
were developed by Stenger~\cite{stenger93:_numer},
but error estimates of the formulas are left for future work.
\end{remark}

\subsection{New Error Estimates for the Sinc Indefinite Integration with the DE Transformation}
\label{subsec:main-DE-Sinc-Indef}

This paper gives new error estimates for the Sinc indefinite integration
with $\DEt{1}$, $\DEt{2}$, and $\DEt{3$\ddagger$}$ in the following form
(Theorems~\ref{thm:DE1-Sinc-Indef}--\ref{thm:DE3-Sinc-Indef}).
Let us define $\epsilon^{\textDEg}_{d,\mu}(n)$ as
$\epsilon^{\textDEg}_{d,\mu}(n)=[\rme^{-\pi d n/\log(4 d n/\mu)}\log(4 d n/\mu)]/n$
for short.
\begin{theorem}
\label{thm:DE1-Sinc-Indef}
Assume that $f$ is analytic in $\DEt{1}(\domD_d)$, and
there exist positive constants $K$, $\alpha$, and $\beta$
such that~\eqref{leq:Sinc-quad-case1-alpha} holds
for all $z\in\DEt{1}(\domD_d^{-})$,
and~\eqref{leq:Sinc-quad-case1-beta} holds
for all $z\in\DEt{1}(\domD_d^{+})$.
Let $\mu=\min\{\alpha,\,\beta\}$,
let $\nu=\max\{\alpha,\,\beta\}$,
let $h$ be defined as~\eqref{eq:Def-DE-h-half},
and let $M$ and $N$ be defined as~\eqref{eq:Def-DE-Sinc-MN}.
Furthermore,
let $n$ be taken sufficiently large so that
$n\geq (\nu \rme)/(4 d)$,
$M h \geq x_{\alpha/2}$, and $N h\geq x_{\beta/2}$ hold.
Then, it holds that
\begin{equation*}
\sup_{\tau\in I_1}\left|
\int_{-\infty}^{\tau}f(t)\diff t
- \sum_{k=-M}^N f(\DEt{1}(kh))\DEtDiv{1}(kh)J(k,h)(\DEtInv{1}(\tau))
\right|
\leq C_1 \epsilon^{\textDEg}_{d,\mu}(n),
\end{equation*}
where $C_1$ is a constant independent of $n$, expressed as
\[
 C_1=\frac{2^{\nu+1}K}{\mu d}
\left\{
 \frac{1}{(1-\rme^{-\pi\mu\rme/2})\{\cos(\frac{\pi}{2}\sin d)\}^{\nu}\cos d}
 + \rme^{\pi(\alpha+\beta)/4}
\right\}.
\]
\end{theorem}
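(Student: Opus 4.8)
The plan is to transfer the estimate to the real line and split the error into one discretization term and two truncation tails, paralleling the DE quadrature analysis behind Theorem~\ref{thm:DE1-Sinc-Quad-explicit}. Put $g(x)=f(\DEt{1}(x))\DEtDiv{1}(x)$ and $y=\DEtInv{1}(\tau)$. Since $J(k,h)(x)=\int_{-\infty}^{x}S(k,h)(\sigma)\diff\sigma$ for the cardinal function $S(k,h)(\sigma)=\sin[\pi(\sigma/h-k)]/[\pi(\sigma/h-k)]$, the approximant equals the integral over $(-\infty,y]$ of the \emph{truncated} cardinal series of $g$; subtracting and adding the full series, the error becomes
\begin{equation*}
\int_{-\infty}^{y}\Bigl[g-\sum_{k=-\infty}^{\infty}g(kh)S(k,h)\Bigr]\diff\sigma
+\sum_{k<-M}g(kh)J(k,h)(y)+\sum_{k>N}g(kh)J(k,h)(y).
\end{equation*}
Uniformly in $y$ one has $|J(k,h)(y)|\le 1.1\,h$, where $1.1$ bounds the overshoot $\sup_{t}|\tfrac12+\Si(t)/\pi|$ of the sine integral; this is the source of the factor $h$ in the truncation tails, and, as we will see, the discretization term carries the same factor $h$ after integration in $\sigma$, so the whole bound scales as $h\,\rme^{-\pi d/h}=\epsilon^{\textDEg}_{d,\mu}(n)$ once $h=\log(4dn/\mu)/n$ is inserted.

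For the discretization term I would use the indefinite-integration analogue of the Sinc sampling theorem: starting from the contour representation of $g-\sum_{k}g(kh)S(k,h)$ on the two boundary lines $\Im\zeta=\pm d$ of $\domD_d$ and integrating it in $\sigma$ \emph{before} estimating, one obtains a bound uniform in $y$ of the form $(\text{const})\,h\,\|g\|_{\Hone(\domD_d)}\,\rme^{-\pi d/h}/[d(1-\rme^{-2\pi d/h})]$. The degradation from the quadrature's $\rme^{-2\pi d/h}$ to this single $\rme^{-\pi d/h}$ is intrinsic to indefinite integration and is precisely what fixes the exponent in the claim. It then remains to bound $\|g\|_{\Hone(\domD_d)}=\int_{-\infty}^{\infty}(|g(u+\imnum d)|+|g(u-\imnum d)|)\diff u$; splitting each boundary line at $\Re\zeta=0$, applying~\eqref{leq:Sinc-quad-case1-alpha} and~\eqref{leq:Sinc-quad-case1-beta} to $|f(\DEt{1}(\zeta))|$, and using the DE-transformation estimates (lower bounds on $|1+\DEt{1}(\zeta)^2|$ in terms of $\cos(\tfrac{\pi}{2}\sin d)$ and the bound on $|\DEtDiv{1}(\zeta)|$) reproduces exactly the $\Hone$-norm estimate already carried out for Theorem~\ref{thm:DE1-Sinc-Quad-explicit}. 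Collecting factors yields the prefactor $2^{\nu+1}K/(\mu d)$ together with the first bracketed term of $C_1$.

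For the two tails I would bound $h\sum_{k>N}|g(kh)|$ and $h\sum_{k<-M}|g(kh)|$ by reusing the monotonicity device behind $x_\gamma$ (Okayama et al.~\cite[Proposition 4.17]{okayama09:_error}): the hypotheses $Nh\ge x_{\beta/2}$ and $Mh\ge x_{\alpha/2}$ guarantee that the relevant envelope $\cosh(x)\rme^{-\pi(\beta/2)\sinh x}$ (and its $\alpha$-counterpart) is monotone on the truncation range, so each tail is dominated by a geometric series summing to a constant multiple of $h\,|g(Nh)|$ (resp.\ $h\,|g(-Mh)|$). Inserting the double-exponential decay of $g$ and the choice~\eqref{eq:Def-DE-Sinc-MN} of $M$ and $N$ collapses both tails to a multiple of $\rme^{\pi(\alpha+\beta)/4}\,h\,\rme^{-\pi d/h}$, producing the additive constant $\rme^{\pi(\alpha+\beta)/4}$.

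Finally I would add the discretization and truncation bounds, factor out $h\,\rme^{-\pi d/h}=\epsilon^{\textDEg}_{d,\mu}(n)$, and read off $C_1$; the hypothesis $n\ge(\nu\rme)/(4d)$ is what lets the geometric ratio $\rme^{-2\pi d/h}$ be bounded by $\rme^{-\pi\mu\rme/2}$, keeping the denominator $1-\rme^{-\pi\mu\rme/2}$ uniform in $n$. The step I expect to be the main obstacle is the discretization lemma itself: unlike the quadrature case, one must estimate the $\sigma$-integral of the cardinal interpolation error \emph{uniformly in the upper limit} $y$, so the contour representation has to be integrated first and only then bounded, and pushing this through with an explicit constant—while confirming the degradation of the exponent to $\rme^{-\pi d/h}$—is the delicate part. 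The $\Hone$-norm estimate and the tail estimates, by contrast, are essentially inherited from the already-established DE quadrature theorem.
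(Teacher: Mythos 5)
Your proposal is correct and follows essentially the same route as the paper: the paper's written proof consists of a membership lemma showing $F(\zeta)=f(\DEt{1}(\zeta))\DEtDiv{1}(\zeta)\in\LC_{L,R,\alpha,\beta}^{\textDEg}(\domD_d)$ with $L=2^{\nu}K/\{\cos(\frac{\pi}{2}\sin d)\}^{(\nu-\mu)/2}$ and $R=2^{\nu}K$, combined with the general Theorem~\ref{thm:DE-Sinc-Indef-overall}, whose omitted proof (cited to the case-4 analysis of Okayama et al.) is precisely your decomposition into a discretization term and two truncation tails, with the $1.1h$ bound on $J(k,h)$, the boundary-line contour estimate degrading the exponent to $\rme^{-\pi d/h}$, the $x_{\gamma}$ monotonicity device for the tails, and the hypothesis $n\geq(\nu\rme)/(4d)$ supplying the uniform factor $1-\rme^{-\pi\mu\rme/2}$. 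The only difference is organizational --- you inline the proof of the general theorem rather than routing through the function class $\LC_{L,R,\alpha,\beta}^{\textDEg}(\domD_d)$ --- and your pieces assemble to the stated $C_1$.
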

\begin{theorem}
\label{thm:DE2-Sinc-Indef}
Assume that $f$ is analytic in $\DEt{2}(\domD_d)$, and
there exist positive constants $K$, $\alpha$, and $\beta$
such that~\eqref{leq:Sinc-quad-case2-alpha-beta} holds with $i=2$
for all $z\in\DEt{2}(\domD_d)$.
Let $\mu=\min\{\alpha,\,\beta\}$,
let $\nu=\max\{\alpha,\,\beta\}$,
let $h$ be defined as~\eqref{eq:Def-DE-h-half},
and let $M$ and $N$ be defined as~\eqref{eq:Def-DE-Sinc-MN}.
Furthermore,
let $n$ be taken sufficiently large so that
$n\geq (\nu \rme)/(4 d)$,
$M h \geq x_{\alpha/2}$, and $N h \geq x_{\beta/2}$ hold.
Then, it holds that
\begin{equation*}
\sup_{\tau\in I_2}\left|
\int_{0}^{\tau}f(t)\diff t
- \sum_{k=-M}^N f(\DEt{2}(kh))\DEtDiv{2}(kh)J(k,h)(\DEtInv{2}(\tau))
\right|
\leq C_2\epsilon^{\textDEg}_{d,\mu}(n),
\end{equation*}
where $C_2$ is a constant independent of $n$, expressed as
\begin{align*}
C_2&=\frac{2K}{\mu d}
\left\{
 \frac{1}{(1-\rme^{-\pi\mu\rme/2})\{\cos(\frac{\pi}{2}\sin d)\}^{(\alpha+\beta)/2}\cos d}
 + \rme^{\pi(\alpha+\beta)/4}
\right\}.
\end{align*}
\end{theorem}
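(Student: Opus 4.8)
The plan is to reduce the claim to an indefinite-integration error on the whole real line and then to split that error into a discretization part and a truncation part, reusing the boundary estimates already needed for the quadrature counterpart, Theorem~\ref{thm:DE2-Sinc-Quad-explicit}.

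First I would set $F(x)=f(\DEt{2}(x))\DEtDiv{2}(x)$ and $\xi=\DEtInv{2}(\tau)$. Because $\DEt{2}$ maps the real axis monotonically onto $I_2=(0,\infty)$, the substitution $t=\DEt{2}(x)$ turns $\int_0^\tau f(t)\diff t$ into $\int_{-\infty}^{\xi}F(x)\diff x$, while the summand becomes $F(kh)J(k,h)(\xi)$; taking the supremum over $\tau\in I_2$ is the same as taking it over $\xi\in\mathbb{R}$. Inserting and subtracting the infinite sum and using the triangle inequality then bounds the quantity in question by $E_{\mathrm d}+E_{\mathrm t}$, where $E_{\mathrm d}=\sup_\xi|\int_{-\infty}^{\xi}F\diff x-\sum_{k=-\infty}^{\infty}F(kh)J(k,h)(\xi)|$ is the discretization error and $E_{\mathrm t}=\sup_\xi|\{\sum_{k<-M}+\sum_{k>N}\}F(kh)J(k,h)(\xi)|$ is the truncation error.

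For $E_{\mathrm d}$ I would invoke the general estimate for Sinc indefinite integration established in Section~\ref{sec:proofs}: for $F$ analytic in $\domD_d$ with finite $\None(F,d)=\int_{-\infty}^{\infty}\{|F(x+\imnum d)|+|F(x-\imnum d)|\}\diff x$, one has $E_{\mathrm d}\leq(\text{const})\,\None(F,d)\,h\rme^{-\pi d/h}/\{d(1-\rme^{-2\pi d/h})\}$. The decisive extra factor $h$ — which separates the indefinite case from the quadrature case and eventually produces $\epsilon^{\textDEg}_{d,\mu}(n)=h\rme^{-\pi d/h}$ — arises from integrating the oscillatory interpolation error against the Cauchy kernel along the lines $\Im\zeta=\pm d$ and integrating by parts once. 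The weight $\None(F,d)$ is exactly the one bounded in the proof of Theorem~\ref{thm:DE2-Sinc-Quad-explicit}: writing $z=\DEt{2}(x\pm\imnum d)$, using $|f(z)|\leq K|E_2(z;\alpha,\beta)|$, and estimating the factors $|\DEt{2}|$, $|\DEtDiv{2}|$ and $|1+\DEt{2}^2|$ on the two boundary lines produces the denominators $\{\cos(\frac{\pi}{2}\sin d)\}^{(\alpha+\beta)/2}$ and $\cos d$. Finally, the hypothesis $n\geq(\nu\rme)/(4d)$ gives $2\pi d/h\geq\pi\mu\rme/2$, so $1/(1-\rme^{-2\pi d/h})\leq 1/(1-\rme^{-\pi\mu\rme/2})$, which is precisely the first bracketed term of $C_2$.

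For $E_{\mathrm t}$ I would use the uniform bound $\sup_\xi|J(k,h)(\xi)|\leq(\text{const})\,h$, valid because $\frac{1}{2}+\frac{1}{\pi}\Si$ is bounded, and then dominate the tail sums $\sum_{k>N}|F(kh)|$ and $\sum_{k<-M}|F(kh)|$ by integrals exactly as in the quadrature proof; here the assumptions $Nh\geq x_{\beta/2}$ and $Mh\geq x_{\alpha/2}$ guarantee that $\cosh(x)\rme^{\pm\pi\gamma\sinh x}$ (with $\gamma=\beta/2$ or $\alpha/2$) is monotone on the relevant rays (\cite[Proposition~4.17]{okayama09:_error}), so each sum is controlled by its endpoint term and contributes the factor $\rme^{\pi(\alpha+\beta)/4}$. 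Assembling $E_{\mathrm d}$ and $E_{\mathrm t}$ and factoring out $\frac{2K}{\mu d}\epsilon^{\textDEg}_{d,\mu}(n)$ then yields the stated $C_2$. The main obstacle is the explicit bookkeeping of constants so that the two contributions combine into exactly the two bracketed terms of $C_2$; in particular the indefinite-integration lemma must be proved with the sharp $h/d$ factor coming from the single integration by parts, and the $h$ carried by the weight of $J$ in $E_{\mathrm t}$ must match the $h$ in $E_{\mathrm d}$ so that a single clean factor $\epsilon^{\textDEg}_{d,\mu}(n)$ emerges.
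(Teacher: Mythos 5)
Your proposal is correct and takes essentially the same route as the paper: the paper simply channels the argument through the function class $\LC_{L,R,\alpha,\beta}^{\textDEg}(\domD_d)$ — membership of $F(\zeta)=f(\DEt{2}(\zeta))\DEtDiv{2}(\zeta)$ with $L=R=K$ is immediate from~\eqref{leq:Sinc-quad-case2-alpha-beta} with $i=2$ — and then invokes the general Theorem~\ref{thm:DE-Sinc-Indef-overall}, whose (omitted, cited) proof is exactly your decomposition into a discretization part bounded via the Haber-type $\None(F,d)$ estimate with the sharp $h\,\rme^{-\pi d/h}/d$ factor and a truncation part controlled by the $x_{\alpha/2}$, $x_{\beta/2}$ monotonicity thresholds together with $\sup_{\xi}|J(k,h)(\xi)|\leq 1.1h$. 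The only difference is organizational: you inline the abstract estimate rather than factoring it through the function class, and your checks of the individual constants (e.g., $n\geq\nu\rme/(4d)$ yielding $(1-\rme^{-2\pi d/h})^{-1}\leq(1-\rme^{-\pi\mu\rme/2})^{-1}$) match the paper's bookkeeping.
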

\begin{theorem}
\label{thm:DE3-Sinc-Indef}
Assume that $f$ is analytic in $\DEt{3$\ddagger$}(\domD_d)$, and
there exist positive constants $K$, $\beta$,
and $\alpha$ with $\alpha\leq 1$
such that~\eqref{leq:Sinc-quad-case2-alpha-beta} holds with $i=3$
for all $z\in\DEt{3$\ddagger$}(\domD_d)$.
Let $\mu=\min\{\alpha,\,\beta\}$,
let $\nu=\max\{\alpha,\,\beta\}$,
let $h$ be defined as $h=\log(2 d n/\mu)/n$,
and let $M$ and $N$ be defined as~\eqref{eq:Def-DE-Sinc-MN}.
Furthermore,
let $n$ be taken sufficiently large so that
$n\geq (\nu \rme)/(2 d)$,
$M h \geq x_{\alpha}$, and $N h \geq x_{\beta}$ hold.
Then, it holds that
\begin{equation*}
\sup_{\tau\in I_3}\left|
\int_{0}^{\tau}f(t)\diff t
- \sum_{k=-M}^N f(\DEt{3$\ddagger$}(kh))\DEtDiv{3$\ddagger$}(kh)
  J(k,h)(\DEtInv{3$\ddagger$}(\tau))
\right|
\leq C_{3\ddagger}\epsilon^{\textDEg}_{d,2\mu}(n),
\end{equation*}
where $C_{3\ddagger}$ is a constant independent of $n$, expressed as
\begin{align*}
 C_{3\ddagger}&=\frac{2K}{\mu d}
\left\{
 \frac{(\tilde{c}_{d})^{1-\alpha}}{(1-\rme^{-\pi\mu\rme})\{\cos(\frac{\pi}{2}\sin d)\}^{\alpha+\beta}\cos d}
 + 
\rme^{\pi(1+5\alpha+6\beta)/12}
\right\},
\end{align*}
and where $\tilde{c}_{d}$ is a constant defined in~\eqref{tilde-c-alpha-d}.
\end{theorem}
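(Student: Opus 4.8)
The plan is to reduce everything to the error analysis of the bare Sinc indefinite integration applied to the transformed integrand, and then to recycle the analytic estimates already needed for the definite-integration counterpart, Theorem~\ref{thm:DE3-Sinc-Quad-explicit}, which uses the very same transformation $\DEt{3$\ddagger$}$ and the very same hypothesis $|f(z)|\leq K|E_3(z;\alpha,\beta)|$. Concretely, I would put $F(x)=f(\DEt{3$\ddagger$}(x))\DEtDiv{3$\ddagger$}(x)$ and write $\int_0^{\tau}f(t)\diff t=\int_{-\infty}^{\sigma}F(x)\diff x$ with $\sigma=\DEtInv{3$\ddagger$}(\tau)$; because $\DEtInv{3$\ddagger$}$ maps $I_3$ bijectively onto $\mathbb{R}$, the supremum over $\tau\in I_3$ equals the supremum over $\sigma\in\mathbb{R}$. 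The quantity to bound is then the uniform error of $\sum_{k=-M}^{N}F(kh)J(k,h)(\sigma)$, which I would decompose in the usual way into a discretization error $\int_{-\infty}^{\sigma}F-\sum_{k=-\infty}^{\infty}F(kh)J(k,h)(\sigma)$ and a truncation error $\bigl(\sum_{k<-M}+\sum_{k>N}\bigr)F(kh)J(k,h)(\sigma)$.

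For the discretization error I would invoke the general bound for Sinc indefinite integration over $\Hone$-type functions analytic on $\domD_d$ (the indefinite-integration analogue of the estimate underlying Theorem~\ref{thm:DE3-Sinc-Quad-explicit}), which controls the error uniformly in $\sigma$ by the boundary quantity $\None(F,d)$ times a factor governed by $\rme^{-\pi d/h}$ and $1/d$. The key input is an explicit bound on $\None(F,d)$, i.e. on $\int_{-\infty}^{\infty}\bigl(|F(x+\imnum d)|+|F(x-\imnum d)|\bigr)\diff x$; but this is exactly the estimate carried out in the proof of Theorem~\ref{thm:DE3-Sinc-Quad-explicit}, so I would import it verbatim. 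It is there that the hypothesis $\alpha\leq1$ enters, keeping the algebraic factor $(z/(1+z))^{\alpha-1}$ of $E_3$ integrable as $z\to0$, and there that the lower bound on $|z/(1+z)|$ with $z=\DEt{3$\ddagger$}(\zeta)$ along the lines $\Im\zeta=\pm d$ produces the constants $c_d=1+1/\cos(\tfrac{\pi}{2}\sin d)$ and $\tilde{c}_d$, together with the factor $\{\cos(\tfrac{\pi}{2}\sin d)\}^{\alpha+\beta}\cos d$.

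For the truncation error I would exploit the elementary two-sided bound $0\leq J(k,h)(\sigma)\leq 1.1\,h$, valid because $\tfrac{1}{2}+\tfrac{1}{\pi}\Si(y)\in[0,1.089\ldots]$ for every real $y$; this reduces $\sum_{k<-M}+\sum_{k>N}$ to $1.1\,h$ times one-sided tail sums of $|F(kh)|$ of exactly the type bounded in the quadrature proof. The double-exponential behaviour of $F$ built into $\DEt{3$\ddagger$}$, namely $\DEt{3$\ddagger$}(x)\sim\rme^{\pi\sinh x}\to0$ as $x\to-\infty$ and $\DEt{3$\ddagger$}(x)\sim\pi\sinh x\to\infty$ as $x\to+\infty$, makes each tail monotone once $Mh\geq x_{\alpha}$ and $Nh\geq x_{\beta}$ (the thresholds supplied by $x_{\gamma}$), hence dominated by its leading term; collecting these yields the constant $\rme^{\pi(1+5\alpha+6\beta)/12}$. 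Finally I would insert $h=\log(2dn/\mu)/n$, at which point the choice of $M,N$ from~\eqref{eq:Def-DE-Sinc-MN} balances the two parts and the discretization and truncation contributions combine into a common bound of the form $\epsilon^{\textDEg}_{d,2\mu}(n)$; the condition $n\geq\nu\rme/(2d)$ is used to lower-bound the denominator of the discretization factor, producing $(1-\rme^{-\pi\mu\rme})$ in $C_{3\ddagger}$.

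Because the whole analytic core is inherited from Theorem~\ref{thm:DE3-Sinc-Quad-explicit}, the genuinely new difficulty lies in the indefinite-integration mechanism rather than in the boundary estimate: one must establish the $\Hone$-type discretization bound uniformly in $\sigma$ and track how the half-rate $\rme^{-\pi d/h}$ characteristic of indefinite integration (in contrast to the quadrature rate $\rme^{-2\pi d/h}$) combines with the step size, so that the halved step $h=\log(2dn/\mu)/n$ and the resulting $2\mu$ in $\epsilon^{\textDEg}_{d,2\mu}(n)$ emerge consistently and the two error contributions assemble under the single factor $\epsilon^{\textDEg}_{d,2\mu}(n)$. I expect this bookkeeping of the half-exponent, together with the uniform-in-$\sigma$ control of the discretization error, to be the main obstacle, the deepest underlying estimate — the explicit bound on $\None(F,d)$ — being already available from the quadrature theorem.
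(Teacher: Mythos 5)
Your scaffolding---transform to $F(x)=f(\DEt{3$\ddagger$}(x))\DEtDiv{3$\ddagger$}(x)$, split into a discretization error controlled by a strip-boundary integral and a truncation error controlled by $0\leq J(k,h)(\sigma)\leq 1.1h$ plus monotone tails---is indeed the machinery the paper relies on, packaged as Theorem~\ref{thm:DE-Sinc-Indef-overall} (whose proof is omitted by reference to the case-4 results). But you misattribute the one step that is genuinely specific to this theorem. The subscript $2\mu$ in $\epsilon^{\textDEg}_{d,2\mu}(n)$ and the mesh $h=\log(2dn/\mu)/n$ do \emph{not} emerge from ``bookkeeping the half-rate $\rme^{-\pi d/h}$ of indefinite integration'': that half-rate is already absorbed in the generic DE indefinite-integration estimate, which for cases 1 and 2 yields $\epsilon^{\textDEg}_{d,\mu}(n)$ with $h=\log(4dn/\mu)/n$ as in~\eqref{eq:Def-DE-h-half}. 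What actually produces the doubling is Lemma~\ref{lem:DE3-Sinc-Quad-check}: since $\DEt{3$\ddagger$}(t)=\log(1+\rme^{\pi\sinh t})$ uses $\pi\sinh t$ rather than $(\pi/2)\sinh t$, the factor $\rme^{-\beta z}$ of $E_3$ becomes $(1+\rme^{\pi\sinh\zeta})^{-\beta}$, so $F$ belongs to $\LC_{L,R,2\alpha,2\beta}^{\textDEg}(\domD_d)$, i.e.\ with decay parameters $(2\alpha,\,2\beta)$, not $(\alpha,\,\beta)$. Feeding $\mu'=2\mu$, $\nu'=2\nu$ into Theorem~\ref{thm:DE-Sinc-Indef-overall} then mechanically yields $h=\log(4dn/(2\mu))/n=\log(2dn/\mu)/n$, the thresholds $x_{2\alpha/2}=x_{\alpha}$ and $x_{\beta}$, the condition $n\geq 2\nu\rme/(4d)=\nu\rme/(2d)$, the factor $(1-\rme^{-\pi\mu\rme})$, and the exponent $\alpha+\beta$ on $\cos(\frac{\pi}{2}\sin d)$. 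Run with undoubled parameters, your plan would output a bound $C\,\epsilon^{\textDEg}_{d,\mu}(n)$ at the mesh $h=\log(4dn/\mu)/n$ and could not match the statement; the consistency you expect to ``emerge'' from the half-exponent bookkeeping does not.

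A second, smaller gap: the constant $\rme^{\pi(1+5\alpha+6\beta)/12}$ cannot come from tail monotonicity alone. It factors as $(\rme^{\pi/12})^{1-\alpha}\cdot\rme^{\pi(2\alpha+2\beta)/4}$, and only the second factor is the generic real-line weight $R\,\rme^{\pi(\alpha'+\beta')/4}$ of Theorem~\ref{thm:DE-Sinc-Indef-overall} evaluated at the doubled parameters. The factor $(\rme^{\pi/12})^{1-\alpha}$ sits inside $R=2(\rme^{\pi/12})^{1-\alpha}K$ and comes from the dedicated real-axis estimate~\eqref{ineq:log-exp-real} of Lemma~\ref{lem:log-exp}, namely $\sup_{t\geq0}\frac{1+t}{t}(1-\rme^{-t})\leq 1+\frac{\log 6}{6}<\rme^{\pi/12}$ applied with $t=\log(1+\rme^{\pi\sinh x})$---an elementary but nontrivial lemma your proposal never supplies. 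Its complex counterpart~\eqref{ineq:log-exp-complex}, which delivers $\tilde{c}_d$ of~\eqref{tilde-c-alpha-d}, you do correctly propose to import from the quadrature analysis, and that is legitimate since the paper's Lemma~\ref{lem:DE3-Sinc-Quad-check} serves Theorems~\ref{thm:DE3-Sinc-Quad-explicit} and~\ref{thm:DE3-Sinc-Indef} simultaneously. In short: right architecture, but the two genuinely case-3 inputs---the $(2\alpha,2\beta)$ membership and the $\rme^{\pi/12}$ bound---are missing, and the first is misdiagnosed in a way that would leave the stated mesh, rate, and constants underived.
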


\begin{remark}
The formulas with $\DEt{1}$, $\DEt{2}$, and $\DEt{3}$
were originally developed by Muhammad--Mori~\cite{muhammad03:_doubl},
but no error analysis was done on cases 2 and 3.
In case 1, the authors claimed that
the formula can achieve
$\Order(\exp(-\pi d n/\log(4 d n/(\mu-\epsilon))))$,
where $\epsilon$ denotes an arbitrary small positive number,
under some mild conditions (not specified clearly).
In contrast,
Theorem~\ref{thm:DE1-Sinc-Indef} states the
better convergence rate than that
claimed by Muhammad--Mori~\cite{muhammad03:_doubl},
under explicit assumptions.
\end{remark}

\section{Numerical Examples} \label{sec:numer-exam}

In order to numerically confirm the results presented
in Sections~\ref{sec:Sinc-quad-estimates}
and~\ref{sec:Sinc-indef-estimates},
let us consider the following three examples.

\begin{example}[Case 1~{\cite{muhammad03:_doubl}}]
\label{Exam:case_1}
Consider the function
$f_1(t)=\sqrt{3}/(2\pi(t^2+t+1))$
and its definite/indefinite integral on $I_1$:
\begin{align}
\int_{-\infty}^{\infty}f_1(t)\diff t&=1,\label{eq:Sinc_quad0}\\
\int_{-\infty}^{\tau}f_1(t)\diff t&=
\frac{1}{2}+\frac{1}{\pi}\arctan\left\{\frac{2}{\sqrt{3}}\left(\tau+\frac{1}{2}\right)\right\}.\label{eq:Sinc_indef0}
\end{align}
The integrand $f_1$ satisfies the assumptions
in Theorems~\ref{thm:SE1-Sinc-Quad-Explicit}
and~\ref{thm:SE1-Sinc-Indef}
with $\alpha=\beta=1$, $d=3/4$, and $K=\sqrt{3}\rme$.
In addition, $f_1$ satisfies the assumptions
in Theorems~\ref{thm:DE1-Sinc-Quad-explicit}
and~\ref{thm:DE1-Sinc-Indef}
with $\alpha=\beta=1$, $d=\pi/7$, and $K=8\sqrt{3}/\mathrm{e}$.
\end{example}
\begin{example}[Case 2~{\cite{muhammad03:_doubl}}]
\label{Exam:case_2}
Consider the function
$f_2(t)=2/(\pi(1+t^2))$
and its definite/indefinite integral on $I_2$:
\begin{align}
\int_{0}^{\infty}f_2(t)\diff t&=1,\label{eq:Sinc_quad2}\\
\int_{0}^{\tau}f_2(t)\diff t&=\frac{2}{\pi}\arctan(\tau).
\label{eq:Sinc_indef2}
\end{align}
The integrand $f_2$ satisfies the assumptions
in Theorems~\ref{thm:SE2-Sinc-Quad-Explicit}
and~\ref{thm:SE2-Sinc-Indef} ($i=2$)
with $\alpha=\beta=1$, $d=\cosh(1)$, and $K=2/\pi$.
In addition, $f_2$ satisfies the assumptions
in Theorems~\ref{thm:DE2-Sinc-Quad-explicit}
and~\ref{thm:DE2-Sinc-Indef}
with $\alpha=\beta=1$, $d=3/2$, and $K=2/\pi$.
\end{example}
\begin{example}[Case 3~{\cite{takahasi74:_doubl}}]
\label{Exam:case_3}
Consider the function
$f_3(t)=\rme^{-(1+t)}/(1+t)$
and its definite/indefinite integral on $I_3$:
\begin{align}
\int_{0}^{\infty}f_3(t)\diff t&=\EInt(1),\label{eq:Sinc_quad3}\\
\int_{0}^{\tau}f_3(t)\diff t&=\EInt(1)-\Gamma(0,1+\tau),
\label{eq:Sinc_indef3}
\end{align}
where $\EInt(x)$ is the exponential integral,
and $\Gamma(s,x)$ is the incomplete gamma function.
The integrand $f_3$ satisfies the assumptions
in Theorems~\ref{thm:SE2-Sinc-Quad-Explicit}
and~\ref{thm:SE2-Sinc-Indef} ($i=3$)
with $\alpha=\beta=1$, $d=3/2$, and $K=\rme^{-1}$.
In addition, $f_3$ satisfies the assumptions
in Theorems~\ref{thm:DE3-Sinc-Quad-explicit}
and~\ref{thm:DE3-Sinc-Indef}
with $\alpha=\beta=1$, $d=\log(\pi)$, and $K=\rme$.
\end{example}

Numerical results are shown in
Figures~\ref{Fig:Sinc_quad0}--\ref{Fig:Sinc_indef3}.
All programs were written in C++ with double-precision
floating-point arithmetic,
and the GNU Scientific Library was used for computing special functions
(for this reason, rounding errors are not considered).
In Figure~\ref{Fig:Sinc_indef0},
`maximum error' denotes the maximum value
of absolute errors investigated on the following 403 points:
$
 \tau=0,\,\pm 2^{-100},\,\pm 2^{-99},\,\ldots,\,
 \pm 2^{-1},\,\pm 2^{0}$,
$\pm 2^{1},\,\ldots,\,\pm 2^{100}$.
Similarly, in Figures~\ref{Fig:Sinc_indef2} and~\ref{Fig:Sinc_indef3},
errors were investigated on 201 points
(just the positive points of above), and their maximum is plotted
in those figures.
In each graph,
we can see that the error estimate by the presented theorem (dotted line)
surely bounds the actual error (solid line).

\begin{figure}
 \begin{minipage}{0.47\linewidth}
  \includegraphics[width=1.05\linewidth]{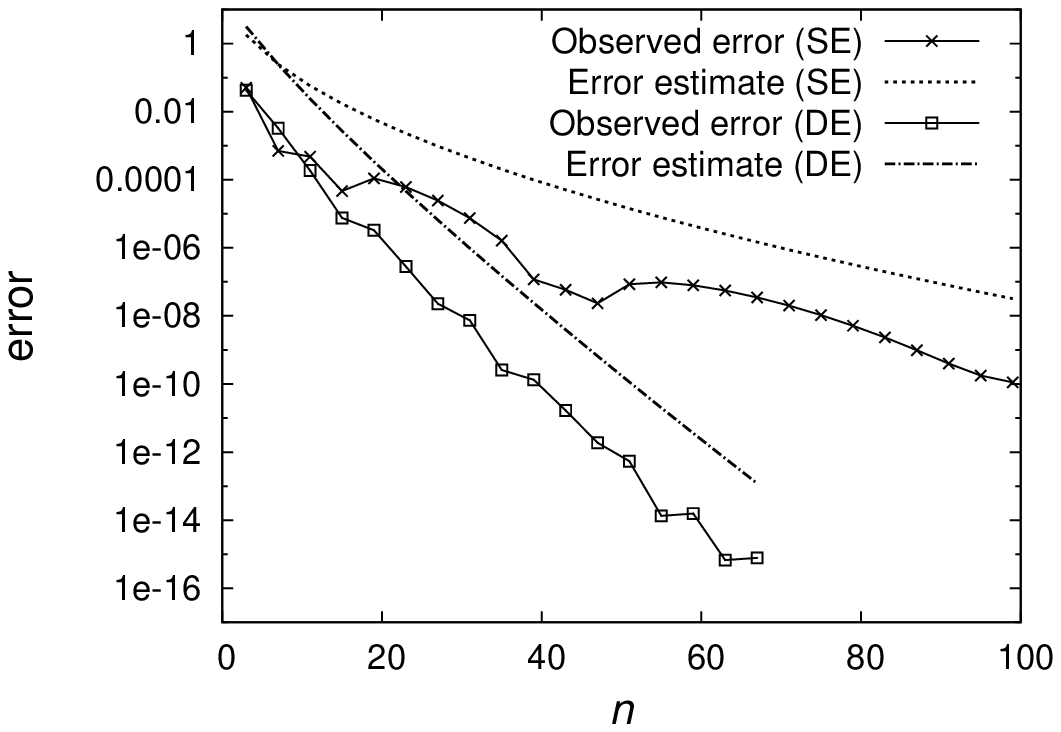}
  \caption{Error of the Sinc quadrature for~\eqref{eq:Sinc_quad0} and its estimate.}
  \label{Fig:Sinc_quad0}
 \end{minipage}
 \begin{minipage}{0.02\linewidth}
  \mbox{ }
 \end{minipage}
 \begin{minipage}{0.47\linewidth}
  \includegraphics[width=1.05\linewidth]{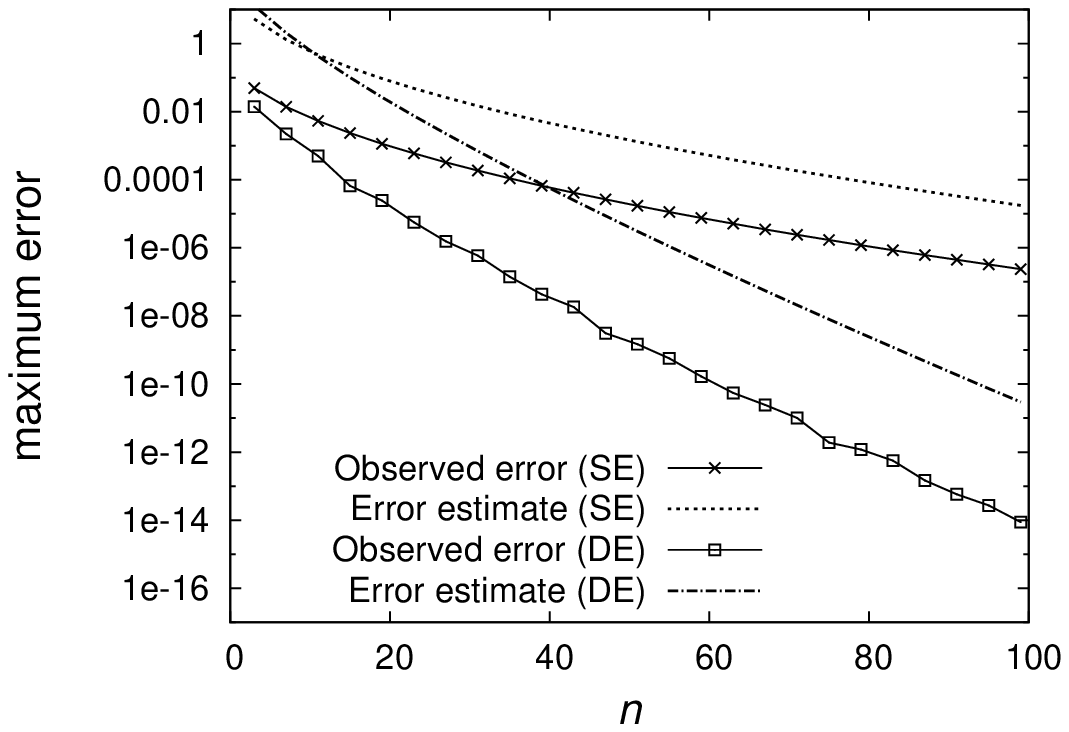}
  \caption{Error of the Sinc indefinite integration for~\eqref{eq:Sinc_indef0} and its estimate.}
  \label{Fig:Sinc_indef0}
 \end{minipage}

 \begin{minipage}{0.47\linewidth}
  \includegraphics[width=1.05\linewidth]{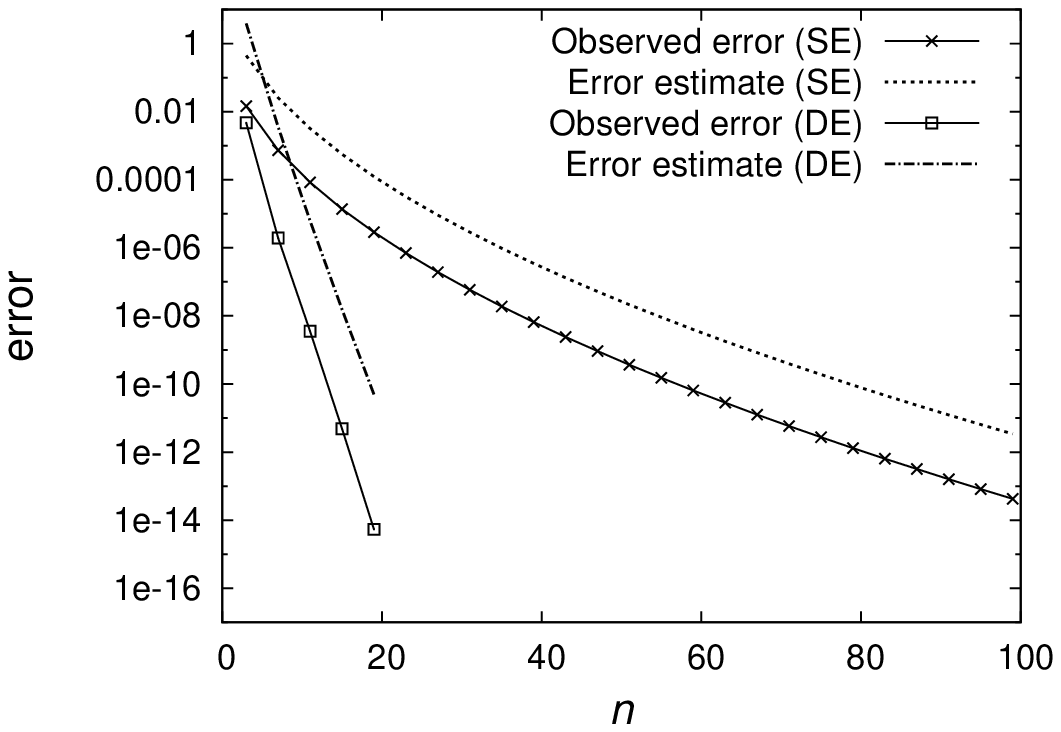}
  \caption{Error of the Sinc quadrature for~\eqref{eq:Sinc_quad2} and its estimate.}
  \label{Fig:Sinc_quad2}
 \end{minipage}
 \begin{minipage}{0.02\linewidth}
  \mbox{ }
 \end{minipage}
 \begin{minipage}{0.47\linewidth}
  \includegraphics[width=1.05\linewidth]{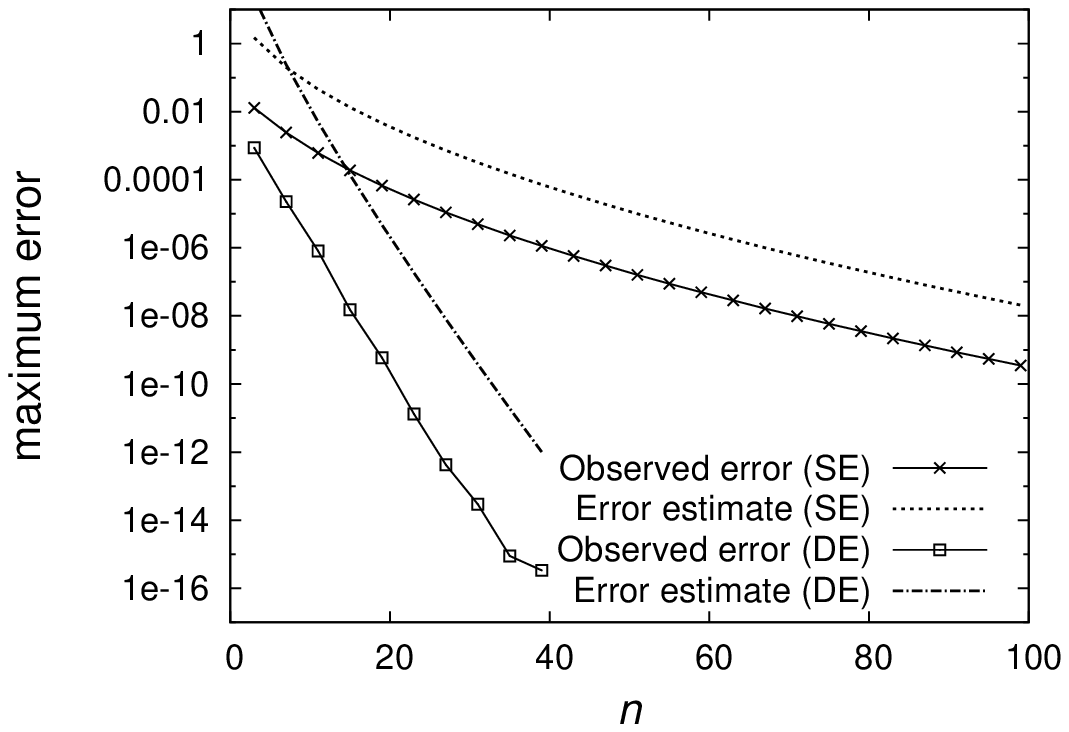}
  \caption{Error of the Sinc indefinite integration for~\eqref{eq:Sinc_indef2} and its estimate.}
  \label{Fig:Sinc_indef2}
 \end{minipage}

 \begin{minipage}{0.47\linewidth}
  \includegraphics[width=1.05\linewidth]{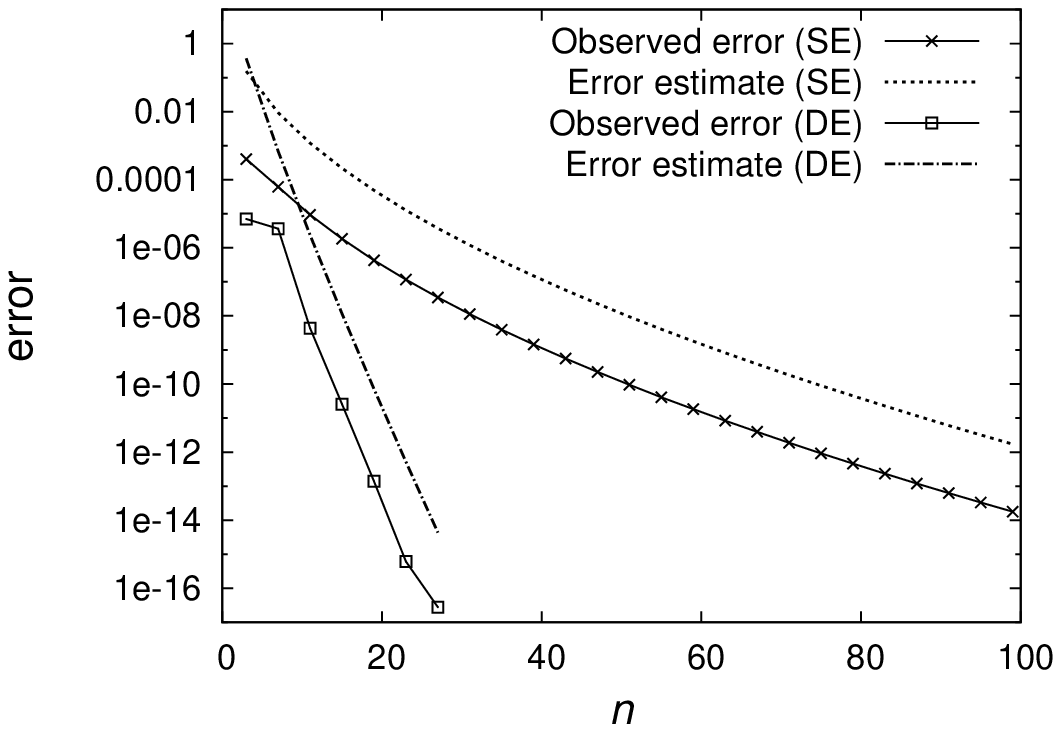}
  \caption{Error of the Sinc quadrature for~\eqref{eq:Sinc_quad3} and its estimate.}
  \label{Fig:Sinc_quad3}
 \end{minipage}
 \begin{minipage}{0.02\linewidth}
  \mbox{ }
 \end{minipage}
 \begin{minipage}{0.47\linewidth}
  \includegraphics[width=1.05\linewidth]{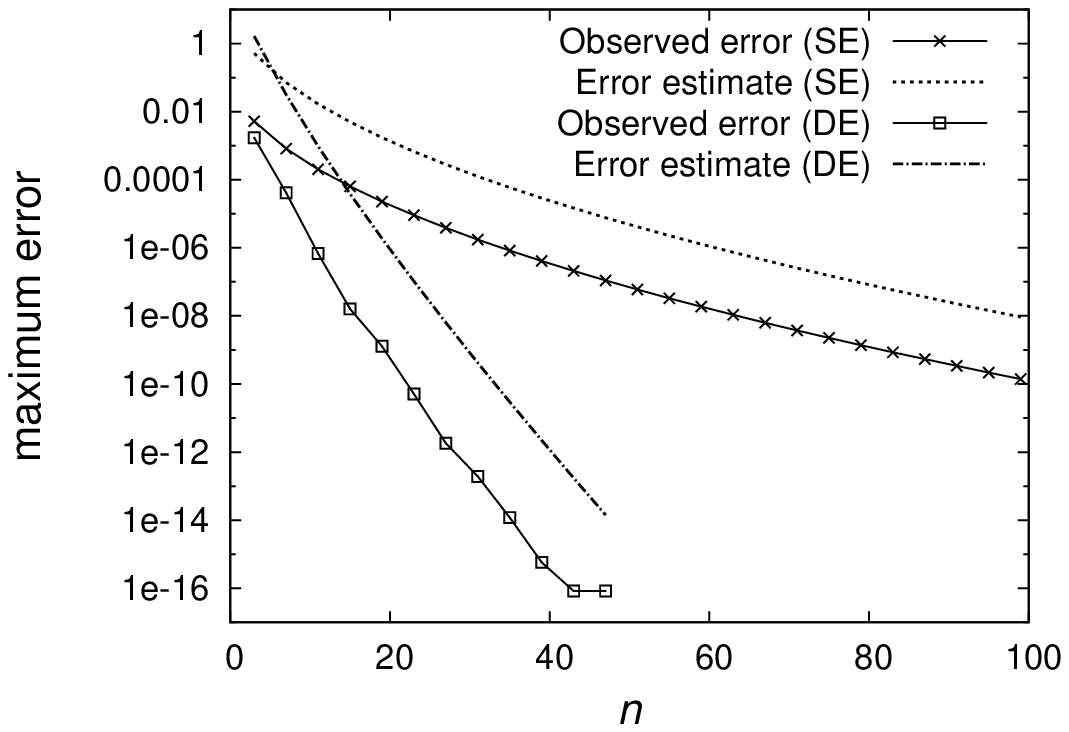}
  \caption{Error of the Sinc indefinite integration for~\eqref{eq:Sinc_indef3} and its estimate.}
  \label{Fig:Sinc_indef3}
 \end{minipage}
\end{figure}


\section{Proofs}
\label{sec:proofs}

\subsection{In the Case of the SE Transformation}

Let us first have a look at the sketch of the proof
by using the Sinc quadrature as an example.
Let $F(x)=f(\SEt{$i$}(x))\SEtDiv{$i$}(x)$
(recall that we employ the SE transformation $t=\SEt{$i$}(x)$).
Then, we have to evaluate the following term:
\begin{align*}
\left|
\int_{I_i}f(t)\diff t
-h\sum_{k=-M}^N f(\SEt{$i$}(kh))\SEtDiv{$i$}(kh)
\right|
=\left|
\int_{-\infty}^{\infty}F(x)\diff x
-h\sum_{k=-M}^N F(kh)
\right|.
\end{align*}
For the estimation,
the function space defined below plays an important role.
\begin{definition}
Let $L,\,R,\,\alpha,\,\beta$ be positive constants,
and $d$ be a constant where $0<d<\pi/2$.
Then, $\LC_{L,R,\alpha,\beta}^{\textSEg}(\domD_d)$
denotes a family of functions $F$
that are analytic on $\domD_d$,
and for all $\zeta\in\domD_d$ and $x\in\mathbb{R}$, satisfy
\begin{align}
 |F(\zeta)|&\leq
 \frac{L}{|1+\rme^{-2\zeta}|^{\alpha/2}|1+\rme^{2\zeta}|^{\beta/2}},
\label{ineq:LC-SE-complex}\\
 |F(x)|&\leq \frac{R}{(1+\rme^{-2x})^{\alpha/2}(1+\rme^{2x})^{\beta/2}}.
\label{ineq:LC-SE-real}
\end{align}
\end{definition}
If $F$ belongs to this function space,
the error of the Sinc quadrature is estimated as below.
The proof is omitted here because it is quite similar to
the existing theorem for case 4~\cite[Theorem~2.6]{okayama09:_error}.
\begin{theorem}
\label{thm:SE-Sinc-Quad-overall}
Let $F\in\LC_{L,R,\alpha,\beta}^{\textSEg}(\domD_d)$,
let $\mu=\min\{\alpha,\,\beta\}$,
let $h$ be defined as~\eqref{eq:Def-SE-h},
and let $M$ and $N$ be defined as~\eqref{eq:Def-SE-Sinc-MN}.
Then, it holds that
\begin{equation*}
\left|
\int_{-\infty}^{\infty}F(x)\diff x
-h\sum_{k=-M}^{N} F(kh)
\right|
\leq \frac{2}{\mu}
\left[
\frac{2L}{(1-\rme^{-\sqrt{2\pi d \mu}})\{\cos d\}^{(\alpha+\beta)/2}}
+R
\right]\rme^{-\sqrt{2\pi d \mu n}}.
\end{equation*}
\end{theorem}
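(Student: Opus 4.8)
The plan is to bound the quantity by the classical two-part decomposition, separating the \emph{discretization} (sampling) error from the \emph{truncation} error. Writing $E$ for the left-hand side of the asserted inequality, I would insert the full bi-infinite trapezoidal sum and apply the triangle inequality to obtain
\[
E \leq \left|\int_{-\infty}^{\infty}F(x)\diff x - h\sum_{k=-\infty}^{\infty}F(kh)\right|
 + h\sum_{k=-\infty}^{-M-1}|F(kh)|
 + h\sum_{k=N+1}^{\infty}|F(kh)|,
\]
and then estimate the three pieces separately. The first (discretization) term will be controlled by the complex bound \eqref{ineq:LC-SE-complex}, and the two tails by the real bound \eqref{ineq:LC-SE-real}.

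For the discretization term I would invoke the standard estimate for the Hardy space $\Hone(\domD_d)$, namely that it is bounded by $\{\rme^{-2\pi d/h}/(1-\rme^{-2\pi d/h})\}\None(F,d)$, where $\None(F,d)$ collects the $L^1$-norms of $F$ on the two boundary lines $\Im\zeta=\pm d$. The real work is to bound $\None(F,d)$ via \eqref{ineq:LC-SE-complex}, and the one genuinely analytic ingredient is the elementary inequality, valid for $|\Im\zeta|\le d$,
\[
|1+\rme^{\pm 2\zeta}| \geq (\cos d)\,\bigl(1+\rme^{\pm 2\Re\zeta}\bigr),
\]
which follows from $|1+\rme^{2\zeta}|^2=(1+\rme^{2\Re\zeta})^2-4\rme^{2\Re\zeta}\sin^2(\Im\zeta)$ together with $(1-\rme^{2\Re\zeta})^2\ge 0$. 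Substituting it on each boundary line turns \eqref{ineq:LC-SE-complex} into a bound by $L/\{\cos d\}^{(\alpha+\beta)/2}$ times the real profile of \eqref{ineq:LC-SE-real}, whose integral over $x$ I would split at $x=0$ and estimate by $1/\alpha+1/\beta\le 2/\mu$, giving $\None(F,d)\le 4L/(\mu\{\cos d\}^{(\alpha+\beta)/2})$. Finally, since $2\pi d/h=\sqrt{2\pi d\mu n}$ by \eqref{eq:Def-SE-h}, and since $n\ge 1$ gives $\rme^{-\sqrt{2\pi d\mu n}}\le\rme^{-\sqrt{2\pi d\mu}}$, I would replace the $n$-dependent denominator $1-\rme^{-\sqrt{2\pi d\mu n}}$ by the weaker, $n$-independent $1-\rme^{-\sqrt{2\pi d\mu}}$, producing exactly the $L$-term of the claimed constant.

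For the two tails I would use \eqref{ineq:LC-SE-real} directly: for $k>N>0$ the factors $(1+\rme^{-2kh})^{\alpha/2}\ge 1$ and $(1+\rme^{2kh})^{\beta/2}\ge\rme^{\beta kh}$ give $|F(kh)|\le R\,\rme^{-\beta kh}$, and comparing the decreasing summand to its integral yields $h\sum_{k=N+1}^{\infty}|F(kh)|\le (R/\beta)\rme^{-\beta Nh}$; symmetrically $h\sum_{k=-\infty}^{-M-1}|F(kh)|\le (R/\alpha)\rme^{-\alpha Mh}$. The definitions \eqref{eq:Def-SE-h} and \eqref{eq:Def-SE-Sinc-MN} are arranged precisely so that both exponents dominate $\sqrt{2\pi d\mu n}$: when $\mu=\alpha$ one has $\alpha Mh=\alpha nh=\sqrt{2\pi d\mu n}$ and $\beta Nh\ge\beta(\alpha n/\beta)h=\sqrt{2\pi d\mu n}$, and the case $\mu=\beta$ is symmetric. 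Hence the two tails together contribute at most $R(1/\alpha+1/\beta)\rme^{-\sqrt{2\pi d\mu n}}\le (2R/\mu)\rme^{-\sqrt{2\pi d\mu n}}$, which is the $R$-term of the claimed constant. Adding the discretization and truncation contributions yields the asserted bound.

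The hard part will be the bookkeeping around the \emph{asymmetric} cut-off: I must carry the two cases $\mu=\alpha$ and $\mu=\beta$ in parallel and check that the ceiling in $N=\lceil\alpha n/\beta\rceil$ (respectively $M$) only strengthens the estimate, so that each truncation exponent genuinely dominates $\sqrt{2\pi d\mu n}$. By comparison, the boundary inequality is the only place where analyticity enters, and it simultaneously discharges the integrability hypothesis ($\None(F,d)<\infty$, i.e.\ $F\in\Hone(\domD_d)$) needed to apply the discretization estimate; everything else is routine substitution and the monotone replacement justified by $n\ge 1$.
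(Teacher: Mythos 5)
Your proposal is correct and takes essentially the approach the paper intends: the paper omits this proof as ``quite similar to'' Okayama et al.~\cite[Theorem~2.6]{okayama09:_error}, which is precisely your decomposition into a discretization error (bounded via the Hardy-space estimate with $\None(F,d)$, using the boundary inequality $|1+\rme^{\pm2\zeta}|\geq(\cos d)(1+\rme^{\pm2\Re\zeta})$ to produce the $\{\cos d\}^{(\alpha+\beta)/2}$ factor and the split at $x=0$ giving $1/\alpha+1/\beta\leq 2/\mu$) plus two truncation tails bounded by $(R/\alpha)\rme^{-\alpha Mh}$ and $(R/\beta)\rme^{-\beta Nh}$. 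All steps check out, including the identity $2\pi d/h=\sqrt{2\pi d\mu n}$, the verification that the choice~\eqref{eq:Def-SE-Sinc-MN} makes both truncation exponents dominate $\sqrt{2\pi d\mu n}$ in each of the cases $\mu=\alpha$ and $\mu=\beta$, and the $n\geq 1$ monotone replacement of the denominator $1-\rme^{-\sqrt{2\pi d\mu n}}$ by $1-\rme^{-\sqrt{2\pi d\mu}}$.
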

This theorem states the desired error estimates with explicit constants
for the Sinc quadrature
(if $F\in\LC_{L,R,\alpha,\beta}^{\textSEg}(\domD_d)$,
which is not yet confirmed).

For the Sinc indefinite integration,
the next theorem holds.
This proof is also omitted because
it is quite similar to the
existing theorem for case 4~\cite[Theorem~2.9]{okayama09:_error}.
\begin{theorem}
\label{thm:SE-Sinc-Indef-overall}
Let $F\in\LC_{L,R,\alpha,\beta}^{\textSEg}(\domD_d)$,
let $\mu=\min\{\alpha,\,\beta\}$,
let $h$ be defined as~\eqref{eq:Def-SE-h-indef},
and let $M$ and $N$ be defined as~\eqref{eq:Def-SE-Sinc-MN}.
Then, it holds that
\begin{align*}
&\sup_{\xi\in\mathbb{R}}\left|
\int_{-\infty}^{\xi}F(x)\diff x
-\sum_{k=-M}^{N} F(kh)J(k,h)(\xi)
\right|\\
&\leq \frac{2}{\mu}
\left[
\frac{L}{(1-\rme^{-2\sqrt{\pi d \mu}})\{\cos d\}^{(\alpha+\beta)/2}}
\sqrt{\frac{\pi}{d \mu}}
+1.1R
\right]\rme^{-\sqrt{\pi d \mu n}}.
\end{align*}
\end{theorem}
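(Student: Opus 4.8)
The plan is to bound the supremum by splitting it into a \emph{discretization error}, obtained by replacing the truncated sum with the full bi-infinite sum and comparing against the exact indefinite integral, and a \emph{truncation error}, coming from the two tails $\sum_{k<-M}$ and $\sum_{k>N}$ of that full sum; this mirrors the companion quadrature estimate in Theorem~\ref{thm:SE-Sinc-Quad-overall}. The organizing observation is that the basis function is the running integral of the Sinc function: with $S(k,h)(\sigma)=\sin[\pi(\sigma/h-k)]/[\pi(\sigma/h-k)]$ one has $J(k,h)(\xi)=\int_{-\infty}^{\xi}S(k,h)(\sigma)\diff\sigma$, so after interchanging sum and integral,
\[
\sum_{k=-\infty}^{\infty}F(kh)J(k,h)(\xi)=\int_{-\infty}^{\xi}\Bigl(\sum_{k=-\infty}^{\infty}F(kh)S(k,h)(\sigma)\Bigr)\diff\sigma .
\]
Hence the untruncated error equals the running integral $\int_{-\infty}^{\xi}[F(\sigma)-C(F,h)(\sigma)]\diff\sigma$ of the Sinc interpolation error, where $C(F,h)$ is the Sinc cardinal series; the full error is this quantity plus the two omitted tails, and the two pieces can be estimated independently.

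The truncation error is the routine part. I would use the uniform bound $|J(k,h)(\xi)|\le 1.1\,h$, which follows from $|\tfrac12+\tfrac1\pi\Si(y)|\le 1.1$ for every real $y$ (the maximum of $\Si$ is attained near $y=\pi$, slightly exceeding $\pi/2$; this is exactly the source of the constant $1.1$). Combining this with the real-axis decay bound~\eqref{ineq:LC-SE-real}, which gives $|F(kh)|\le R\,\rme^{-\beta kh}$ for $kh>0$ and $|F(kh)|\le R\,\rme^{\alpha kh}$ for $kh<0$, each tail becomes a geometric series. Using $h/(\rme^{\beta h}-1)\le 1/\beta$ together with the choice~\eqref{eq:Def-SE-Sinc-MN} of $M$ and $N$ (so that the exponents equal $\sqrt{\pi d\mu n}$), each tail is bounded by $(1.1R/\mu)\rme^{-\sqrt{\pi d\mu n}}$, and the two together produce the $1.1R$ contribution inside the bracket.

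The discretization error is the main obstacle. Here I would insert the standard Cauchy/contour representation of the Sinc interpolation error on the strip $\domD_d$; along the lines $\Im\zeta=\pm d$ the factor $1/|\sin(\pi\zeta/h)|\le 1/\sinh(\pi d/h)$ yields both the decay $\rme^{-\pi d/h}=\rme^{-\sqrt{\pi d\mu n}}$ and the geometric factor $1/(1-\rme^{-2\pi d/h})$, the latter being bounded by $1/(1-\rme^{-2\sqrt{\pi d\mu}})$ since $n\ge 1$. Two delicate points remain. First, one must control the \emph{running} integral $\int_{-\infty}^{\xi}$ uniformly in $\xi$, not merely pointwise; I would do this by passing to an absolutely convergent estimate against the contour data. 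Second, one must evaluate the integral of the complex bound~\eqref{ineq:LC-SE-complex} over $\partial\domD_d$ in closed form: shifting to $\Im\zeta=\pm d$ bounds the moduli $|1+\rme^{\mp2\zeta}|$ below by quantities proportional to $\cos d$, which produces the $\{\cos d\}^{-(\alpha+\beta)/2}$ factor, while the decay profile integrated under the running limit supplies the extra $\sqrt{\pi/(d\mu)}$ factor that distinguishes this constant from its quadrature analogue. Tracking this last factor faithfully, under the general decay data $(\alpha,\beta,L)$ of the function space $\LC_{L,R,\alpha,\beta}^{\textSEg}(\domD_d)$, is the crux of the argument.

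Finally, adding the discretization and truncation bounds and collecting the common prefactor $2/\mu$ and the common exponential $\rme^{-\sqrt{\pi d\mu n}}$ reproduces the stated estimate. Throughout, the reasoning runs parallel to the finite-interval case~\cite[Theorem~2.9]{okayama09:_error}; the only genuinely new bookkeeping is the closed-form evaluation of the strip integrals under the decay profile encoded in~\eqref{ineq:LC-SE-complex}–\eqref{ineq:LC-SE-real}, which is why I expect the explicit determination of the $\sqrt{\pi/(d\mu)}$ and $\{\cos d\}^{-(\alpha+\beta)/2}$ constants to be where essentially all the work lies.
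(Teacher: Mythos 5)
Your decomposition into a discretization part (replacing the truncated sum by the full cardinal series, using that $J(k,h)$ is the running integral of the sinc kernel, and estimating the interpolation error by a contour argument on $\partial\domD_d$ with the $1/\sinh(\pi d/h)$ and $(1-\rme^{-2\pi d/h})^{-1}$ factors) plus a truncation part (via $|J(k,h)(\xi)|\leq 1.1h$, the real-axis decay~\eqref{ineq:LC-SE-real}, and the choice~\eqref{eq:Def-SE-Sinc-MN}) is exactly the argument the paper intends: it omits the proof of Theorem~\ref{thm:SE-Sinc-Indef-overall} precisely because it is ``quite similar'' to the finite-interval case~\cite[Theorem~2.9]{okayama09:_error}, which proceeds along these same lines. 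Your sketch is correct as far as it goes, and the bookkeeping you flag as the crux (the $\sqrt{\pi/(d\mu)}$ and $\{\cos d\}^{-(\alpha+\beta)/2}$ constants from integrating~\eqref{ineq:LC-SE-complex} along $\Im\zeta=\pm d$) is indeed where the remaining, purely computational, work lies.
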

This theorem states the desired error estimates with explicit constants
for the Sinc indefinite integration.


In view of Theorems~\ref{thm:SE-Sinc-Quad-overall}
and~\ref{thm:SE-Sinc-Indef-overall},
our project is completed by checking
$F\in\LC_{L,R,\alpha,\beta}^{\textSEg}(\domD_d)$
in each case: 1, 2, and 3.
Let us check each case one by one.

\subsubsection{Proofs in Case 1
(Theorems~\ref{thm:SE1-Sinc-Quad-Explicit}
and~\ref{thm:SE1-Sinc-Indef})}

The claims of
Theorems~\ref{thm:SE1-Sinc-Quad-Explicit}
and~\ref{thm:SE1-Sinc-Indef} follow
from the next lemma.

\begin{lemma}
Let the assumptions in Theorem~\ref{thm:SE1-Sinc-Quad-Explicit}
or Theorem~\ref{thm:SE1-Sinc-Indef}
be fulfilled.
Then, the function $F(\zeta)=f(\SEt{1}(\zeta))\SEtDiv{1}(\zeta)$
belongs to $\LC_{L,R,\alpha,\beta}^{\textSEg}(\domD_d)$
with $L=2^{\nu}K/\{\cos d\}^{(\nu-\mu)/2}$
and $R=2^{\nu}K$.
\end{lemma}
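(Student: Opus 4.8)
The plan is to verify the two defining inequalities \eqref{ineq:LC-SE-complex} and \eqref{ineq:LC-SE-real} of the space $\LC_{L,R,\alpha,\beta}^{\textSEg}(\domD_d)$ for $F(\zeta)=f(\SEt{1}(\zeta))\SEtDiv{1}(\zeta)=f(\sinh\zeta)\cosh\zeta$. Analyticity of $F$ on $\domD_d$ is immediate, since $\sinh$ and $\cosh$ are entire and $f$ is analytic on $\SEt{1}(\domD_d)$ by hypothesis. The first reduction I would make is to exploit the identity $1+\sinh^2\zeta=\cosh^2\zeta$, so that $|E_1(\sinh\zeta;\gamma)|=|\cosh\zeta|^{-(\gamma+1)}$. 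Feeding this into \eqref{leq:Sinc-quad-case1-alpha} and \eqref{leq:Sinc-quad-case1-beta} and multiplying by $|\SEtDiv{1}(\zeta)|=|\cosh\zeta|$ turns them into the clean bounds
\[
|F(\zeta)|\leq\frac{K}{|\cosh\zeta|^{\alpha}}\quad(\zeta\in\domD_d^{-}),\qquad
|F(\zeta)|\leq\frac{K}{|\cosh\zeta|^{\beta}}\quad(\zeta\in\domD_d^{+}).
\]

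Next I would translate $|\cosh\zeta|$ into the two exponential factors appearing in \eqref{ineq:LC-SE-complex}. Writing $1+\rme^{2\zeta}=2\rme^{\zeta}\cosh\zeta$ and $1+\rme^{-2\zeta}=2\rme^{-\zeta}\cosh\zeta$ and multiplying their moduli gives the key identity $|\cosh\zeta|^{2}=\tfrac14|1+\rme^{-2\zeta}|\,|1+\rme^{2\zeta}|$, i.e. $|\cosh\zeta|=\tfrac12|1+\rme^{-2\zeta}|^{1/2}|1+\rme^{2\zeta}|^{1/2}$. Substituting this into the two reduced bounds produces, on $\domD_d^{-}$, the quantity $2^{\alpha}K/(|1+\rme^{-2\zeta}|^{\alpha/2}|1+\rme^{2\zeta}|^{\alpha/2})$, and on $\domD_d^{+}$ the quantity $2^{\beta}K/(|1+\rme^{-2\zeta}|^{\beta/2}|1+\rme^{2\zeta}|^{\beta/2})$. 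To match the target exponents $(\alpha/2,\beta/2)$ of \eqref{ineq:LC-SE-complex}, the only discrepancy is a single leftover power of $|1+\rme^{2\zeta}|$ on $\domD_d^{-}$ (respectively $|1+\rme^{-2\zeta}|$ on $\domD_d^{+}$), which I would control by two-sided bounds on that one factor.

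The crux is therefore the elementary estimate that, for $\Re\zeta\leq0$ and $|\Im\zeta|<d$,
\[
\cos d\leq|1+\rme^{2\zeta}|\leq 2,
\]
and, by the symmetry $\zeta\mapsto-\zeta$, the same for $|1+\rme^{-2\zeta}|$ when $\Re\zeta\geq0$. Setting $\zeta=x+\imnum y$ and $u=\rme^{2x}\in(0,1]$, one has $|1+\rme^{2\zeta}|^{2}=1+2u\cos2y+u^{2}$; the upper bound follows at once since $1+2u\cos2y+u^{2}\leq(1+u)^{2}\leq4$, while the lower bound splits into the case $\cos2y\geq0$ (where the expression is $\geq1\geq\cos^{2}d$) and the case $\cos2y<0$ (where minimizing the quadratic in $u$ gives the value $\sin^{2}2y\geq\sin^{2}2d=4\sin^{2}d\cos^{2}d$). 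Once these two-sided bounds are in hand, I would finish by a short case distinction on whether $\mu=\alpha$ or $\mu=\beta$: in each case the leftover factor is raised to a nonnegative or a nonpositive power, so that exactly one of the bounds $|1+\rme^{\pm2\zeta}|\leq2<4/\cos d$ or $|1+\rme^{\pm2\zeta}|\geq\cos d$ yields the claimed constant $L=2^{\nu}K/\{\cos d\}^{(\nu-\mu)/2}$. The real inequality \eqref{ineq:LC-SE-real} is the same computation specialized to real $\zeta=x$, where $\cosh x=\tfrac12(1+\rme^{-2x})^{1/2}(1+\rme^{2x})^{1/2}$ and the leftover factor lies in $[1,2]$, giving $R=2^{\nu}K$.

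I expect the main obstacle to be the lower bound $|1+\rme^{2\zeta}|\geq\cos d$ in the regime $\pi/4<|\Im\zeta|<d$ (which can occur only when $d>\pi/4$): there $\cos2y<0$, so the naive triangle-inequality estimate $|1+\rme^{2\zeta}|\geq1-\rme^{2x}$ degenerates as $\Re\zeta\to0$, and one must instead minimize the quadratic $1+2u\cos2y+u^{2}$ over $u$ and then verify the trigonometric inequality $4\sin^{2}d\cos^{2}d\geq\cos^{2}d$, i.e. $\sin d\geq1/2$, which holds precisely because $d>\pi/4$ forces $\sin d>1/\sqrt2$.
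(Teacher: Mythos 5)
Your proposal is correct and takes essentially the same approach as the paper: the same reduction to $|F(\zeta)|\leq K|\cosh\zeta|^{-\alpha}$ on $\domD_d^{-}$ and $K|\cosh\zeta|^{-\beta}$ on $\domD_d^{+}$ via $1+\sinh^2\zeta=\cosh^2\zeta$, the same factorization $4|\cosh\zeta|^2=|1+\rme^{-2\zeta}|\,|1+\rme^{2\zeta}|$, and the same two-sided control of the single leftover factor ($\leq 2$ from above, $\geq\cos d$ from below), yielding the identical constants $L$ and $R$. The only (cosmetic) difference is in verifying the elementary bounds: you minimize the quadratic $1+2u\cos 2y+u^2$ in $u=\rme^{2x}$ with a case split on the sign of $\cos 2y$ (hence your $d>\pi/4$ discussion), whereas the paper uses the exact identity $|1+\rme^{\pm 2\zeta}|^2=(1+\rme^{\pm 2x})^2\bigl(1-\sin^2 y/\cosh^2 x\bigr)$ together with $\cosh^2 x\geq 1$ to get the lower bound $\cos y\geq\cos d$ uniformly, with no case distinction.
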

\begin{proof}
First, consider the case $\alpha\leq \beta$.
From the inequality~\eqref{leq:Sinc-quad-case1-alpha}, it follows that
\begin{align*}
|F(\zeta)|&\leq
K|E_1(\SEt{1}(\zeta);\alpha)||\SEtDiv{1}(\zeta)|
=
\frac{K}{|1+\rme^{-2\zeta}|^{\alpha/2}|1+\rme^{2\zeta}|^{\beta/2}}
\cdot 2^{\alpha}|1+\rme^{2\zeta}|^{(\beta-\alpha)/2}
\intertext{for $\zeta\in\domD_d$ with $\Re\zeta<0$,
and from the inequality~\eqref{leq:Sinc-quad-case1-beta}, it
follows that}
|F(\zeta)|&
\leq K|E_1(\SEt{1}(\zeta);\beta)||\SEtDiv{1}(\zeta)|
=
\frac{K}{|1+\rme^{-2\zeta}|^{\alpha/2}|1+\rme^{2\zeta}|^{\beta/2}}
\cdot
\frac{2^{\beta}}{|1+\rme^{-2\zeta}|^{(\beta-\alpha)/2}}
\end{align*}
for $\zeta\in\domD_d$ with $\Re\zeta\geq 0$.
Setting $\zeta=x+\imnum y$ with $x<0$, we have
\begin{align*}
2^{\alpha}|1+\rme^{2\zeta}|^{(\beta-\alpha)/2}
&=2^{\alpha}(1+\rme^{2x})^{(\beta-\alpha)/2}
\left\{1-\frac{\sin^2 y}{\cosh^2 x}\right\}^{(\beta-\alpha)/4}\\
&\leq 2^{\alpha}(1+\rme^{0})^{(\beta-\alpha)/2}
\left\{1- 0\right\}^{(\beta-\alpha)/4}\\
&=2^{(\alpha+\beta)/2}\leq 2^{\beta}
\leq \frac{2^{\beta}}{\{\cos y\}^{(\beta-\alpha)/2}}.
\end{align*}
Furthermore, setting $\zeta=x+\imnum y$ with $x\geq 0$, we have
\begin{align*}
\frac{2^{\beta}}{|1+\rme^{-2\zeta}|^{(\beta-\alpha)/2}}
&=\frac{2^{\beta}}{(1+\rme^{-2x})^{(\beta-\alpha)/2}
\left\{1-\frac{\sin^2 y}{\cosh^2 x}\right\}^{(\beta-\alpha)/4}}\\
&\leq
\frac{2^{\beta}}{(1+ 0)^{(\beta-\alpha)/2}
\left\{1-\frac{\sin^2 y}{\cosh^2 0}\right\}^{(\beta-\alpha)/4}}
=\frac{2^{\beta}}{\{\cos y\}^{(\beta-\alpha)/2}}.
\end{align*}
Thus, because $\mu=\alpha$ and $\nu=\beta$ in this case,
it holds for all $\zeta\in\domD_d$ that
\begin{align*}
|F(\zeta)|&\leq
\frac{K}{|1+\rme^{-2\zeta}|^{\alpha/2}|1+\rme^{2\zeta}|^{\beta/2}}
\cdot
\frac{2^{\nu}}{\{\cos d\}^{(\nu-\mu)/2}},
\intertext{and it holds for all $x\in\mathbb{R}$ that}
|F(x)|&\leq
\frac{K}{(1+\rme^{-2x})^{\alpha/2}(1+\rme^{2 x})^{\beta/2}}
\cdot
\frac{2^{\nu}}{\{\cos 0\}^{(\nu-\mu)/2}}.
\end{align*}
In the case $\alpha>\beta$, the same inequalities hold.
This completes the proof.
\end{proof}

\subsubsection{Proofs in Case 2
(Theorems~\ref{thm:SE2-Sinc-Quad-Explicit}
and~\ref{thm:SE2-Sinc-Indef} with $i=2$)}

The claims of
Theorems~\ref{thm:SE2-Sinc-Quad-Explicit}
and~\ref{thm:SE2-Sinc-Indef} ($i=2$) follow
from the next lemma.

\begin{lemma}
Let the assumptions in Theorem~\ref{thm:SE2-Sinc-Quad-Explicit}
or Theorem~\ref{thm:SE2-Sinc-Indef}
be fulfilled with $i=2$.
Then, the function $F(\zeta)=f(\SEt{2}(\zeta))\SEtDiv{2}(\zeta)$
belongs to $\LC_{L,R,\alpha,\beta}^{\textSEg}(\domD_d)$
with $L=K$ and $R=K$.
\end{lemma}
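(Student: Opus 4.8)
The plan is to verify directly the two defining inequalities \eqref{ineq:LC-SE-complex} and \eqref{ineq:LC-SE-real} of the space $\LC_{L,R,\alpha,\beta}^{\textSEg}(\domD_d)$. In contrast to Case~1, here a single bound \eqref{leq:Sinc-quad-case2-alpha-beta} is assumed on all of $\SEt{2}(\domD_d)$, so no splitting into $\Re\zeta<0$ and $\Re\zeta\geq 0$ is needed and the whole argument collapses to one algebraic identity. First I would record analyticity: since $\SEt{2}(\zeta)=\rme^{\zeta}$ is entire and carries $\domD_d$ into $\SEt{2}(\domD_d)$, where $f$ is analytic by hypothesis, the composition $F(\zeta)=f(\rme^{\zeta})\rme^{\zeta}$ is analytic on $\domD_d$.

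For the complex bound \eqref{ineq:LC-SE-complex}, I would put $z=\rme^{\zeta}$ in \eqref{leq:Sinc-quad-case2-alpha-beta} and use $\SEtDiv{2}(\zeta)=\rme^{\zeta}$ together with $(\rme^{\zeta})^{2}=\rme^{2\zeta}$ to obtain
\[
|F(\zeta)|\leq K\frac{|\rme^{\zeta}|^{\alpha-1}}{|1+\rme^{2\zeta}|^{(\alpha+\beta)/2}}\,|\rme^{\zeta}|
= K\frac{|\rme^{\zeta}|^{\alpha}}{|1+\rme^{2\zeta}|^{(\alpha+\beta)/2}}.
\]
The key step is the factorization $1+\rme^{-2\zeta}=\rme^{-2\zeta}(1+\rme^{2\zeta})$, which gives $|1+\rme^{-2\zeta}|^{\alpha/2}=|\rme^{\zeta}|^{-\alpha}|1+\rme^{2\zeta}|^{\alpha/2}$; inserting this into the target denominator shows
\[
\frac{K}{|1+\rme^{-2\zeta}|^{\alpha/2}|1+\rme^{2\zeta}|^{\beta/2}}
= K\frac{|\rme^{\zeta}|^{\alpha}}{|1+\rme^{2\zeta}|^{(\alpha+\beta)/2}},
\]
so \eqref{ineq:LC-SE-complex} holds with $L=K$.

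The real bound \eqref{ineq:LC-SE-real} then follows from the same computation restricted to $\zeta=x\in\mathbb{R}$, where $|\rme^{x}|=\rme^{x}$ and $|1+\rme^{2x}|=1+\rme^{2x}$, so that $R=K$ as well. I expect no genuine obstacle here: because $\SEtDiv{2}=\SEt{2}$, the Jacobian exactly upgrades the factor $z^{\alpha-1}$ appearing in $E_2$ to $z^{\alpha}$, and the factorization identity absorbs the remaining power of $\rme^{\zeta}$ with no loss. This is precisely why both constants come out equal to $K$, with no geometric factor such as $\{\cos d\}^{(\nu-\mu)/2}$ that was forced in Case~1.
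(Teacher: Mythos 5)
Your proof is correct and takes essentially the same route as the paper: the paper's own proof merely states that \eqref{ineq:LC-SE-complex} and \eqref{ineq:LC-SE-real} follow ``immediately'' from \eqref{leq:Sinc-quad-case2-alpha-beta} with $L=R=K$, and your identity $1+\rme^{-2\zeta}=\rme^{-2\zeta}(1+\rme^{2\zeta})$, which turns $K|\rme^{\zeta}|^{\alpha}/|1+\rme^{2\zeta}|^{(\alpha+\beta)/2}$ into exactly the defining bound of $\LC_{L,R,\alpha,\beta}^{\textSEg}(\domD_d)$, is precisely the one-line calculation the paper leaves implicit. You simply spell out (correctly) what the paper asserts without detail, including the analyticity of $F(\zeta)=f(\rme^{\zeta})\rme^{\zeta}$ on $\domD_d$.
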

\begin{proof}
From the inequality~\eqref{leq:Sinc-quad-case2-alpha-beta} with $i=2$,
\eqref{ineq:LC-SE-complex}
and~\eqref{ineq:LC-SE-real} immediately hold with $L=R=K$.
\end{proof}

\subsubsection{Proofs in Case 3
(Theorems~\ref{thm:SE2-Sinc-Quad-Explicit}
and~\ref{thm:SE2-Sinc-Indef} with $i=3$)}

The claims of
Theorems~\ref{thm:SE2-Sinc-Quad-Explicit}
and~\ref{thm:SE2-Sinc-Indef} ($i=3$) follow
from the next lemma.

\begin{lemma}
\label{lem:SE3-Sinc-Quad-check}
Let the assumptions in Theorem~\ref{thm:SE2-Sinc-Quad-Explicit}
or Theorem~\ref{thm:SE2-Sinc-Indef}
be fulfilled with $i=3$.
Then, the function $F(\zeta)=f(\SEt{3}(\zeta))\SEtDiv{3}(\zeta)$
belongs to $\LC_{L,R,\alpha,\beta}^{\textSEg}(\domD_d)$
with $L=2^{\beta/2}c_{\alpha,d} K$
and $R=2^{(1-\alpha+|1-\alpha|)/2}K$,
where $c_{\alpha,d}$ is the constant defined in~\eqref{eq:def-c_alpha_d}.
\end{lemma}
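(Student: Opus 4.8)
The plan is to verify the two defining inequalities of $\LC_{L,R,\alpha,\beta}^{\textSEg}(\domD_d)$ directly, after rewriting everything in terms of the variable $w=\SEt{3}(\zeta)=\arcsinh(\rme^\zeta)$. First I would record the identities coming from $\SEt{3}$: since $\sinh w=\rme^\zeta$ we have $\cosh w=\sqrt{1+\rme^{2\zeta}}$, $\rme^{w}=\sinh w+\cosh w$, and $\SEtDiv{3}(\zeta)=\rme^\zeta/\sqrt{1+\rme^{2\zeta}}=\sinh w/\cosh w$. Applying the hypothesis $|f(z)|\le K|E_3(z;\alpha,\beta)|$ at $z=w$ gives $|F(\zeta)|\le K\,|w/(1+w)|^{\alpha-1}\rme^{-\beta\Re w}\,|\sinh w|/|\cosh w|$. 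The two target weights convert cleanly, $|1+\rme^{-2\zeta}|^{1/2}=|\cosh w|/|\sinh w|$ and $|1+\rme^{2\zeta}|^{1/2}=|\cosh w|$, so multiplying out reduces the complex estimate~\eqref{ineq:LC-SE-complex} to the single inequality $|G(w)|^{\alpha-1}H(w)^{\beta}\le 2^{\beta/2}c_{\alpha,d}$, and the real estimate~\eqref{ineq:LC-SE-real} to $g(w)^{\alpha-1}\le 2^{(1-\alpha+|1-\alpha|)/2}$ for $w>0$ (the real analogue of $H$ being $\le1$ and hence harmless), where I set $G(w)=\frac{w}{1+w}\coth w$, $H(w)=|\cosh w|/|\rme^{w}|$, and $g=G|_{\mathbb R}$.

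For the real bound I would prove $\tfrac12\le g(w)\le 1$ for all $w>0$. The upper bound $g\le1$ is equivalent to $2w/(\rme^{2w}-1)\le1$, i.e. $\rme^{2w}\ge1+2w$; the lower bound $g\ge\tfrac12$ follows by showing that $\phi(w)=2w\cosh w-(1+w)\sinh w$ is positive for $w>0$, which holds because $\phi(0)=0$ and $\phi'>0$. Raising to the power $\alpha-1$ and splitting on the sign of $\alpha-1$ then yields exactly $R=2^{(1-\alpha+|1-\alpha|)/2}K$.

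For the complex bound the first preliminary is to locate the image. Since $\rme^{\domD_d}$ is the sector $\{|\arg u|<d\}$, which lies in the open right half-plane because $d<\pi/2$, and $\arcsinh$ carries the right half-plane into $\{\Re w>0,\ |\Im w|<\pi/2\}$, every $w=\SEt{3}(\zeta)$ satisfies $\Re w>0$ and $|\Im w|<\pi/2$. The factor $H$ is then immediately controlled: from $|\cosh w|^2=\cosh^2(\Re w)-\sin^2(\Im w)\le\cosh^2(\Re w)$ together with $\Re w>0$ one gets $H(w)\le\cosh(\Re w)/\rme^{\Re w}\le1\le\sqrt2$, so $H(w)^{\beta}\le 2^{\beta/2}$, which accounts for the $2^{\beta/2}$ in $L$.

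The remaining and genuinely hard step is the estimate $|G(w)|^{\alpha-1}\le c_{\alpha,d}$ on $\SEt{3}(\domD_d)$. Here $G$ extends analytically across $w=0$ with $G(0)=1$ and is zero-free on the region: its zeros (at $w=\pm\imnum\pi/2$, where $\coth$ vanishes) and its poles (at $w=-1$ and $w=\pm\imnum\pi$) all lie outside $\{\Re w>0,\ |\Im w|<\pi/2\}$, and the apparent singularity at $w=0$ is removable. I would therefore invoke the maximum and minimum modulus principles, together with the limiting value $|G|\to1$ as $\Re\zeta\to\pm\infty$, to push both $\sup|G|$ and $\inf|G|$ onto the boundary $\Im\zeta=\pm d$, parametrize it by $\zeta=x\pm\imnum d$, and estimate there. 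For $\alpha\ge1$ this means establishing the uniform upper bound $|G(w)|\le\sqrt2$, giving $c_{\alpha,d}=2^{(\alpha-1)/2}$; for $0<\alpha<1$ it means establishing the lower bound $1/|G(w)|=|(1+w)/w|\,|\tanh w|\le\sqrt{2(1+1/\cos d)}$, giving $c_{\alpha,d}=\{2(1+1/\cos d)\}^{(1-\alpha)/2}$. I expect the boundary estimate for $0<\alpha<1$ to be the main obstacle, since extracting the sharp $d$-dependence $1+1/\cos d$ forces one to retain the product $|(1+w)/w|\,|\tanh w|$ and control the competition between the blow-up of the first factor and the vanishing of the second near $w=0$, rather than bounding them separately. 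Combining the $H$ bound with the $G$ bound finally gives $L=2^{\beta/2}c_{\alpha,d}K$, completing the verification.
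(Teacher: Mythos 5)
Your reduction is correct and the bookkeeping matches the paper exactly: with $w=\SEt{3}(\zeta)$, membership in $\LC_{L,R,\alpha,\beta}^{\textSEg}(\domD_d)$ does reduce to $|G(w)|^{\alpha-1}H(w)^{\beta}\le L/K$ and $g(w)^{\alpha-1}\le R/K$; your claimed bounds $|G|\le\sqrt2$ (for $\alpha\ge1$) and $1/|G|\le\sqrt{2(1+1/\cos d)}$ (for $0<\alpha<1$) are in fact true, being precisely the composites of the paper's Lemma~\ref{lem:asinh-SE} with Lemmas~\ref{lem:exp-2-SE} and~\ref{lem:cosh-2-SE}; and your observation $H\le1$ on $\{\Re w>0\}$ is correct and even sharper than the paper's factor $2^{\beta/2}$ (the paper loses $\sqrt{2}$ per power of $\beta$ by routing $\rme^{-w}$ through Lemmas~\ref{lem:exp-asinh-SE} and~\ref{lem:exp-2-SE}). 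The real-axis bounds $\tfrac12\le g\le 1$ are also right, and they deliver exactly $R$ (though your ``$\phi'>0$'' itself needs a line: $\phi'(w)=\rme^{-w}+w(2\sinh w-\cosh w)$, and for $w<\tfrac12\log 3$ one checks $w(3-\rme^{2w})\le 2$). But the crux of the lemma --- the two complex bounds on $G$ --- is exactly what you do not prove. You state the strategy (maximum/minimum modulus pushed to $\Im\zeta=\pm d$) and then explicitly defer the boundary estimate, calling it ``the main obstacle.'' In the paper, those estimates are the entire content of Lemmas~\ref{lem:asinh-SE}--\ref{lem:cosh-2-SE}, with detailed case-by-case boundary computations; a correct statement of the needed inequality plus an expectation that it can be checked is a gap, not a proof.

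Moreover, the plan as stated would be very hard to execute, because $w=\arcsinh(\rme^{x+\imnum d})$ has no tractable closed form for general $d\in(0,\pi/2)$, so ``parametrize $\Im\zeta=\pm d$ and estimate there'' leaves a two-parameter boundary computation; and for $0<\alpha<1$ you cannot retreat to the full strip, since $1/G$ blows up at $w=\imnum\pi/2$ (which the closure of the image reaches as $d\to\pi/2$) --- that is exactly why the bound must be $d$-dependent. The missing idea, and the paper's escape route, is a factorization you declined to make: insert the comparison function $\rme^{\zeta}/(1+\rme^{\zeta})$ and write
$\frac{1}{|G(w)|}
=\left|\frac{1+w}{w}\cdot\frac{\rme^{\zeta}}{1+\rme^{\zeta}}\right|
\cdot\frac{|1+\rme^{-\zeta}|}{|1+\rme^{-2\zeta}|^{1/2}}$.
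The first factor is $d$-independent, so the maximum-modulus argument is run once on the full strip $\domD_{\pi/2}$, whose boundary $\zeta=x+\imnum\pi/2$ is the one place where $\arcsinh(\imnum\rme^{x})$ has explicit real and imaginary parts (Lemma~\ref{lem:asinh-SE}, bound $\sqrt2$ in both directions); the second factor carries all the $d$-dependence and needs no maximum principle at all, since $\frac{|1+\rme^{-\zeta}|^{2}}{|1+\rme^{-2\zeta}|}=2\left|\frac{\cosh^{2}(\zeta/2)}{\cosh\zeta}\right|\le 1+\frac{1}{\cos y}$ follows pointwise from $|\cosh(x+\imnum y)|^{2}=\cosh^{2}x-\sin^{2}y$ (Lemma~\ref{lem:cosh-2-SE}); the case $\alpha\ge 1$ is handled the same way using Lemma~\ref{lem:exp-2-SE}. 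Until you either carry out your boundary estimates or adopt such a splitting, your proposal verifies only the real-axis half of the lemma.
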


For the proof,
let us prepare some useful inequalities
(Lemmas~\ref{lem:asinh-SE}--\ref{lem:cosh-2-SE}).

\begin{lemma}
\label{lem:asinh-SE}
For all $\zeta\in\overline{\domD_{\pi/2}}$,
it holds that
\begin{equation}
\frac{1}{\sqrt{2}}
\left|\frac{\rme^{\zeta}}{1+\rme^{\zeta}}\right|
\leq\left|
\frac{\arcsinh(\rme^{\zeta})}{1+\arcsinh(\rme^{\zeta})}
\right|
\leq\sqrt{2}
\left|\frac{\rme^{\zeta}}{1+\rme^{\zeta}}\right|.
\label{ineq:asinh-complex}
\end{equation}
Furthermore, for all $x\in\mathbb{R}$,
it holds that
\begin{equation}
\frac{\arcsinh(\rme^{x})}{1+\arcsinh(\rme^{x})}
\leq\frac{\rme^{x}}{1+\rme^{x}}.
\label{ineq:asinh-real}
\end{equation}
\end{lemma}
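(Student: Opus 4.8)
The plan is to treat the two inequalities of the lemma separately, disposing of the real one immediately and reducing the complex one to a one‑dimensional boundary estimate. For \eqref{ineq:asinh-real}, put $u=\rme^{x}>0$. Because $\sinh$ is convex and increasing with $\sinh 0=0$ and $\sinh'(0)=1$, we have $\sinh t\geq t$ for $t\geq 0$, hence $\arcsinh u\leq u$; since $v\mapsto v/(1+v)$ is increasing on $(0,\infty)$, applying it to $\arcsinh u\leq u$ yields \eqref{ineq:asinh-real} at once.

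For \eqref{ineq:asinh-complex}, set $w=\rme^{\zeta}$ and $s=\arcsinh(w)$, so that the claim is equivalent to $1/\sqrt2\leq|\phi|\leq\sqrt2$ for
\[
 \phi=\frac{s/(1+s)}{w/(1+w)}=\frac{s(1+w)}{w(1+s)}.
\]
The essential difficulty is that the two natural factors $s/w$ and $(1+w)/(1+s)$ are each unbounded as $w\to\infty$ (the former tends to $0$, the latter to $\infty$), so they must be estimated \emph{together}: no factor‑by‑factor bound can succeed. I would therefore keep $\phi$ intact and exploit its analyticity, the key move being to pass to the variable $s$. As $\zeta$ ranges over $\overline{\domD_{\pi/2}}$, the point $w=\rme^{\zeta}$ fills the closed right half‑plane and $s$ fills the closed half‑strip $\Sigma=\{s:\Re s\geq 0,\ |\Im s|\leq\pi/2\}$, boundary mapping to boundary. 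In the $s$‑plane there is no branch cut: writing $w=\sinh s$,
\[
 \phi=\frac{s(1+\sinh s)}{(1+s)\sinh s}
\]
is analytic on a neighbourhood of $\overline{\Sigma}$ apart from poles at $s=-1$ and $s=\imnum\pi k$ with $k\neq 0$ (none of which meet $\overline{\Sigma}$), the singularity at $s=0$ being removable with value $1$; one checks similarly that $\phi$ has no zero on $\overline{\Sigma}$ and that $\phi\to 1$ uniformly as $\Re s\to\infty$ (since $s/(1+s)\to 1$ and $|\sinh s|\to\infty$).

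Consequently $\phi$ and $1/\phi$ are analytic and bounded on $\overline{\Sigma}$, and the Phragm\'en--Lindel\"of maximum principle on the half‑strip shows that $|\phi|$ and $|1/\phi|$ attain their suprema on $\partial\Sigma$. It then remains to bound $|\phi|$ on the two pieces of the boundary, the case $\Im s<0$ following by the reflection $\phi(\bar s)=\overline{\phi(s)}$. On the segment $s=\imnum\tau$, $\tau\in[0,\pi/2]$, where $\sinh s=\imnum\sin\tau$, one finds
\[
 |\phi|^{2}=\frac{\tau^{2}(1+\sin^{2}\tau)}{(1+\tau^{2})\sin^{2}\tau},
\]
and on the ray $s=\sigma+\imnum\pi/2$, $\sigma\geq 0$, where $\sinh s=\imnum\cosh\sigma$,
\[
 |\phi|^{2}=\frac{(\sigma^{2}+\pi^{2}/4)(1+\cosh^{2}\sigma)}{[(1+\sigma)^{2}+\pi^{2}/4]\cosh^{2}\sigma}.
\]
Each is a single‑variable quantity, and I would confirm by elementary calculus that both stay within $[1/2,2]$ (the segment expression is controlled by $\sin\tau\leq\tau$; the ray expression dips to roughly $0.6$ before returning to $1$), which delivers $1/\sqrt2\leq|\phi|\leq\sqrt2$ as required.

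The hard part will be twofold. First, one must recognise that the factors cannot be separated and recast $\phi$ in the branch‑cut‑free variable $s$ so that the maximum principle is available; verifying the hypotheses of Phragm\'en--Lindel\"of — the absence of zeros and poles of $\phi$ on $\overline{\Sigma}$ and the uniform limit $\phi\to1$ at infinity — is the technical heart. Second, the lower bound is the delicate one: on the ray $|\phi|^{2}$ is not monotone and dips below $1$, so $\geq 1/2$ must genuinely be checked rather than read off from the endpoints. Fortunately the margins ($\approx 0.6$ against $1/2$ and $\approx 1.42$ against $2$) are comfortable enough that only crude estimates are needed once the problem has been reduced to the boundary.
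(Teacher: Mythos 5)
Your proposal is correct and follows essentially the same route as the paper: the real inequality via $\sinh t\geq t$ together with the monotonicity of $t/(1+t)$, and the complex one by applying the maximum modulus principle to exactly the paper's function $g(\zeta)=\phi$ and bounding it on the boundary of the strip, where your substitution $s=\arcsinh(\rme^{\zeta})$ merely reparametrizes the paper's boundary computation (your segment and ray expressions are literally the paper's $x<0$ and $x\geq 0$ cases under $s=\arcsin(\rme^{x})$, resp.\ $\sigma=\log[\rme^{x}+\sqrt{\rme^{2x}-1}]$). The deferred boundary estimates do go through with the comfortable margins you indicate — though note the segment upper bound needs $\tan\tau\geq\tau$ rather than $\sin\tau\leq\tau$ alone, and on the ray the lower bound follows since $\sigma^{2}-2\sigma+(\pi^{2}/4-1)>0$ has negative discriminant — which matches the paper's elementary case analysis in substance.
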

\begin{proof}
First, consider~\eqref{ineq:asinh-real},
which is proved by showing that
$p(t)\leq p(\sinh t)$ for $t\geq 0$, where
$p(t)=t/(1+t)$ (just put $x=\log(\sinh t)$).
Because $p(t)$ is monotonically increasing,
the desired inequality
$p(t)\leq p(\sinh t)$ clearly holds.
Hence,~\eqref{ineq:asinh-real} is proved.

Next, consider~\eqref{ineq:asinh-complex},
which is proved
by showing that $|g(\zeta)|\leq\sqrt{2}$ and
$|1/g(\zeta)|\leq\sqrt{2}$, where
\[
 g(\zeta)=\frac{\arcsinh(\rme^{\zeta})}{1+\arcsinh(\rme^{\zeta})}
\frac{1+\rme^{\zeta}}{\rme^{\zeta}}.
\]
The functions $g$ and $1/g$ are analytic on $\domD_{\pi/2}$
(and continuous on $\overline{\domD_{\pi/2}}$).
Therefore, by the maximum modulus principle,
$|g(\zeta)|$ and $|1/g(\zeta)|$ have their maximum
on the boundary of $\domD_{\pi/2}$.
It is sufficient to consider $z=x+\imnum (\pi/2)$ for $x\in\mathbb{R}$.

First, let us show $1/|g(z)|\leq \sqrt{2}$,
which is relatively easy.
Setting $X=\Re\{\arcsinh(\mathrm{i}\rme^x)\}$ and
$Y=\Im\{\arcsinh(\mathrm{i}\rme^x)\}$, we have
\[
|g(z)|^2=|g(x+\imnum \pi/2)|^2
=(1+\rme^{-2x})\frac{X^2+Y^2}{(X+1)^2+Y^2}.
\]
Furthermore, from
\begin{align*}
X&=
\begin{cases}
\log[\rme^{x}+\sqrt{\rme^{2x}-1}]&(\text{if}\,\,\,x\geq 0),\\
0&(\text{if}\,\,\,x< 0),
\end{cases}\\
Y&=
\begin{cases}
\pi/2 & (\text{if}\,\,\,x\geq 0),\\
\arctan(\rme^{x}/\sqrt{1-\rme^{2x}})&(\text{if}\,\,\,x<0),
\end{cases}
\end{align*}
it holds for $x\geq 0$ that
\begin{align*}
\frac{1}{|g(z)|^2}
&=\frac{1}{1+\rme^{-2x}}\cdot
\frac{(1+\log[\rme^x+\sqrt{\rme^{2x}-1}])^2+(\pi/2)^2}{\log^2[\rme^x+\sqrt{\rme^{2x}-1}]+(\pi/2)^2}\\
&\leq \frac{1}{1+\rme^{-2x}}\cdot
\frac{(1+\{\sqrt{1+\pi^2}-1\}/2)^2+(\pi/2)^2}{(\{\sqrt{1+\pi^2}-1\}/2)^2+(\pi/2)^2}<2,
\end{align*}
and it holds for $x< 0$ that
\begin{align*}
\frac{1}{|g(z)|^2}
=\frac{1}{1+\rme^{-2x}}
\left\{1+\frac{1}{\arctan^2[\rme^x/\sqrt{1-\rme^{2x}}]}\right\}
\leq\frac{1}{1+\rme^{-2x}}
\left\{1+\frac{1}{(\rme^x)^2}\right\}=1.
\end{align*}
Thus, $|1/g(x+\imnum \pi/2)|^2\leq 2$
holds for all $x\in\mathbb{R}$.

Next, let us show $|g(z)|\leq \sqrt{2}$.
It holds for $x\geq 0$ that
\[
 |g(z)|^2
=(1+\rme^{-2x})
\frac{\log^2[\rme^x+\sqrt{\rme^{2x}-1}]+(\pi/2)^2}{(1+\log[\rme^x+\sqrt{\rme^{2x}-1}])^2+(\pi/2)^2}
\leq (1+1)\cdot 1 = 2.
\]
For $x<0$, we have
\[
 |g(z)|^2
=(1+\rme^{-2x})
\frac{\arctan^2[\rme^x/\sqrt{1-\rme^{2x}}]}{1+\arctan^2[\rme^x/\sqrt{1-\rme^{2x}}]}
=\frac{2s^2}{1+s^2} + \left(\frac{s}{\tan s}\right)^2\frac{1}{1+s^2},
\]
where $0\leq s=\arctan[\rme^x/\sqrt{1-\rme^{2x}}]<\pi/2$.
In the case where $0\leq s< 1$, it holds that
\[
 |g(z)|^2\leq \frac{2\cdot 1^2}{1+1^2}
 + \left(\frac{s}{\tan s}\right)^2\frac{1}{1+s^2}
\leq \frac{2\cdot 1^2}{1+1^2} + \left(1\right)^2\frac{1}{1+0^2} = 2,
\]
and in the case where $1\leq s<\pi/2$, it holds that
\[
 |g(z)|^2\leq \frac{2\cdot (\pi/2)^2}{1+(\pi/2)^2}
 + \left(\frac{s}{\tan s}\right)^2\frac{1}{1+s^2}
\leq \frac{2\cdot (\pi/2)^2}{1+(\pi/2)^2}+
\left(\frac{1}{\tan 1}\right)^2\frac{1}{1+1^2} < 2.
\]
Thus, $|g(x+\imnum \pi/2)|^2\leq 2$ for all $x\in\mathbb{R}$.
This completes the proof.
\end{proof}

\begin{lemma}
\label{lem:exp-asinh-SE}
For all $\zeta\in\overline{\domD_{\pi/2}}$
and $x\in\mathbb{R}$, we have
\begin{align}
\frac{1}{|\rme^{\zeta}+\sqrt{1+\rme^{2\zeta}}|}
&\leq\frac{\sqrt{2}}{|1+\rme^{\zeta}|},
\label{ineq:exp-asinh-complex}\\
\frac{1}{\rme^{x}+\sqrt{1+\rme^{2x}}}
&\leq\frac{1}{1+\rme^{x}}.
\label{ineq:exp-asinh-real}
\end{align}
\end{lemma}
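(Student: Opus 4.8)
The plan is to treat the two inequalities separately, the real one being immediate and the complex one following from the maximum modulus principle, exactly in the spirit of the proof of Lemma~\ref{lem:asinh-SE}.

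For the real inequality~\eqref{ineq:exp-asinh-real}, I would simply cross-multiply: since both denominators are positive, the claim is equivalent to $1+\rme^{x}\leq \rme^{x}+\sqrt{1+\rme^{2x}}$, i.e.\ to $1\leq\sqrt{1+\rme^{2x}}$, which holds trivially for every $x\in\mathbb{R}$. No further work is needed here.

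For the complex inequality~\eqref{ineq:exp-asinh-complex}, I would rewrite it as the bound $|h(\zeta)|\leq\sqrt{2}$ for the single function
\[
 h(\zeta)=\frac{1+\rme^{\zeta}}{\rme^{\zeta}+\sqrt{1+\rme^{2\zeta}}}.
\]
The key structural observation is that $\rme^{\zeta}+\sqrt{1+\rme^{2\zeta}}=\rme^{\arcsinh(\rme^{\zeta})}$, so the denominator is analytic and nonvanishing on the open strip $\domD_{\pi/2}$ (where $\rme^{\zeta}$ avoids the branch points $\pm\imnum$ of $\arcsinh$), and $h$ is continuous on $\overline{\domD_{\pi/2}}$. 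Moreover $h$ is bounded: as $\Re\zeta\to+\infty$ it tends to $1/2$ and as $\Re\zeta\to-\infty$ it tends to $1$. Hence the maximum modulus principle applies and it suffices to bound $|h|$ on the boundary $\Im\zeta=\pm\pi/2$; by conjugate symmetry I only treat $\zeta=x+\imnum(\pi/2)$, where $\rme^{\zeta}=\imnum\rme^{x}$ gives $|1+\rme^{\zeta}|^{2}=1+\rme^{2x}$.

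To evaluate the denominator on the boundary I would recycle the formulas for $X=\Re\{\arcsinh(\imnum\rme^{x})\}$ and $Y=\Im\{\arcsinh(\imnum\rme^{x})\}$ already derived in the proof of Lemma~\ref{lem:asinh-SE}, which give $|\rme^{\zeta}+\sqrt{1+\rme^{2\zeta}}|=\rme^{X}=\rme^{x}+\sqrt{\rme^{2x}-1}$ for $x\geq 0$ and $=1$ for $x<0$. For $x<0$ this yields $|h(\zeta)|^{2}=1+\rme^{2x}<2$ at once. The only genuine work, and the step I expect to be the main obstacle, is the case $x\geq 0$: there $|h(\zeta)|^{2}=(1+\rme^{2x})/(\rme^{x}+\sqrt{\rme^{2x}-1})^{2}$, and after expanding the square the inequality $|h(\zeta)|^{2}\leq 2$ reduces to $3(\rme^{2x}-1)+4\rme^{x}\sqrt{\rme^{2x}-1}\geq 0$, whose two summands are both nonnegative for $x\geq 0$ (with equality only at $x=0$, i.e.\ $\zeta=\imnum\pi/2$, showing the constant $\sqrt{2}$ is sharp). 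Combining the two cases gives $|h|\leq\sqrt{2}$ on the boundary, hence on all of $\overline{\domD_{\pi/2}}$, which is~\eqref{ineq:exp-asinh-complex}.
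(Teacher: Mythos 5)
Your proposal is correct and follows essentially the same route as the paper: bounding the modulus of the same quotient $g(\zeta)=(1+\rme^{\zeta})/(\rme^{\zeta}+\sqrt{1+\rme^{2\zeta}})$ by $\sqrt{2}$ via the maximum modulus principle on the boundary $\zeta=x+\imnum\pi/2$, with the identical case split $x\geq 0$ and $x<0$ (your reduction to $3(\rme^{2x}-1)+4\rme^{x}\sqrt{\rme^{2x}-1}\geq 0$ is algebraically equivalent to the paper's bound $(1+\rme^{-2x})/(1+\sqrt{1-\rme^{-2x}})^{2}\leq 2$). Your treatment of the real inequality by direct cross-multiplication is in fact simpler than the paper's monotonicity argument via $g'(x)\leq 0$, and your remarks on analyticity of the denominator and boundedness at the ends of the strip make the appeal to the maximum modulus principle slightly more careful than the paper's.
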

\begin{proof}
First, consider~\eqref{ineq:exp-asinh-real},
which is proved by showing
\[
 g(x)=\frac{1+\rme^x}{\rme^x + \sqrt{1+\rme^{2 x}}}\leq 1
\]
for $x\in\mathbb{R}$.
Because
\[
 g'(x)=-\rme^x\left(1-\frac{\rme^x}{\sqrt{1+\rme^{2x}}}\right)
\left((1+\rme^x)-\sqrt{1+\rme^{2x}}\right) \leq 0,
\]
we have $g(x)\leq \lim_{x\to-\infty}\{g(x)\}=1$, which is the desired result.

Next, consider~\eqref{ineq:exp-asinh-complex},
which is proved
by showing $|g(\zeta)|\leq \sqrt{2}$.
The function $g$ is analytic on $\domD_{\pi/2}$,
(and continuous on $\overline{\domD_{\pi/2}}$).
Therefore, by the maximum modulus principle,
$|g(\zeta)|$ has its maximum
on the boundary of $\domD_{\pi/2}$,
i.e., $|\Im\zeta|=\pi/2$.
It is sufficient to consider $z=x+\imnum (\pi/2)$ for $x\in\mathbb{R}$.
In the case $x\geq 0$, we have
\begin{align*}
|g(z)|^2
=\left|
\frac{1}{1+\sqrt{1-\rme^{-2x}}}-\imnum
\frac{\rme^{-x}}{1+\sqrt{1-\rme^{-2x}}}
\right|^2
=\frac{1+\rme^{-2x}}{(1+\sqrt{1-\rme^{-2x}})^2}
\leq 1+\rme^{-2x}\leq 2.
\end{align*}
In the case $x< 0$, we have
\begin{align*}
|g(z)|^2
=\left|
\sqrt{1-\rme^{2x}}+\rme^{2x}
-\imnum(1-\sqrt{1-\rme^{2x}})\rme^{x}
\right|^2
=1+\rme^{2x}\leq 2.
\end{align*}
This completes the proof.
\end{proof}

\begin{lemma}
\label{lem:exp-2-SE}
For all $\zeta\in\domD_{\pi/2}$,
it holds that
\begin{equation*}
\frac{1}{|1+\rme^{\zeta}|}
\leq\frac{1}{|1+\rme^{2\zeta}|^{1/2}}.
\end{equation*}
\end{lemma}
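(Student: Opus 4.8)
The plan is to rewrite the claim in the equivalent form $|1 + \rme^{2\zeta}| \le |1 + \rme^{\zeta}|^2$, obtained by taking reciprocals and then squaring (both moduli being nonnegative reals). Setting $w = \rme^{\zeta}$, the real exponential maps the strip $\domD_{\pi/2} = \{|\Im\zeta| < \pi/2\}$ onto the open right half-plane, so the hypothesis $\zeta \in \domD_{\pi/2}$ is exactly $\Re w > 0$. Thus it suffices to prove $|1 + w^2| \le |1 + w|^2$ for every $w$ with $\Re w > 0$.

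First I would reduce to comparing squares, i.e.\ show $|1+w|^4 - |1+w^2|^2 \ge 0$. Writing $w = a + \imnum b$ with $a = \Re w > 0$ and abbreviating $S = |w|^2 = a^2 + b^2$, a direct expansion gives $|1+w^2|^2 = 1 + 4a^2 - 2S + S^2$ and $|1+w|^4 = (1 + 2a + S)^2$. Subtracting, the quartic term $S^2$, the quadratic term $4a^2$, and the constant cancel, leaving the compact expression
\[
|1+w|^4 - |1+w^2|^2 = 4\bigl(a + S + aS\bigr) = 4\bigl(a + (a^2+b^2)(1+a)\bigr).
\]
Since $a > 0$ and $S > 0$ (as $w = \rme^{\zeta} \ne 0$), the right-hand side is strictly positive, which yields $|1+w^2| \le |1+w|^2$ and hence the lemma.

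The computation is entirely elementary, so there is no genuine obstacle; the only point needing care is the bookkeeping in the cancellation above, and the single conceptual step worth stating explicitly is the identification of $\domD_{\pi/2}$ with the right half-plane, which is what converts the hypothesis into the usable sign condition $a > 0$. As an alternative in keeping with the preceding lemmas, one could instead set $g(\zeta) = (1 + \rme^{2\zeta})/(1+\rme^{\zeta})^2$, note that $g$ is analytic on $\overline{\domD_{\pi/2}}$ (its singularities occur only at $\rme^{\zeta} = -1$, i.e.\ $\Im\zeta = \pi$, outside the closed strip), and invoke the maximum modulus principle to reduce to the boundary $\Im\zeta = \pm\pi/2$, where $\Re w = 0$ forces $|g| < 1$ by the same displayed identity with $a = 0$. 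I would nonetheless prefer the direct half-plane computation, since it avoids any appeal to analyticity and delivers the bound in one line.
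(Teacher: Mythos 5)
Your proof is correct and is in essence the same argument as the paper's: both establish $|1+\rme^{2\zeta}|\leq|1+\rme^{\zeta}|^{2}$ by direct elementary expansion and a sign check, the paper doing the bookkeeping in $\zeta=x+\imnum y$ via $\cosh$/$\cos$ identities (arriving at the nonnegative factor $(1+2\cosh(x)\cos(y))/(\cosh^2(x)-\sin^2(y))$), while you substitute $w=\rme^{\zeta}$, identify the strip $\domD_{\pi/2}$ with the half-plane $\Re w>0$, and obtain the identity $|1+w|^{4}-|1+w^{2}|^{2}=4(a+S+aS)$ with $a=\Re w>0$, $S=|w|^{2}$. The substitution makes the computation slightly tidier and makes transparent that the hypothesis is used exactly through $\Re\,\rme^{\zeta}>0$, but the underlying mechanism is identical to the paper's.
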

\begin{proof}
Let $x,\,y\in\mathbb{R}$ with $|y|<\pi/2$,
and let $\zeta=x+\imnum y$. Then, we have
\[
 \left|\frac{1}{1+\rme^{2\zeta}}\right|
-\left|\frac{1}{1+\rme^{\zeta}}\right|^2
=\frac{\rme^{-x}}{2(\cosh(x)+\cos(y))}
\left(
\sqrt{\frac{2(\cosh(x)+\cos(y))^2}{\cosh(2x)+\cos(2y)}}-1
\right).
\]
To show that the right-hand side is nonnegative,
it suffices to observe
\[
 \frac{2(\cosh(x)+\cos(y))^2}{\cosh(2x)+\cos(2y)}-1
=\frac{1+2\cosh(x)\cos(y)}{\cosh^2(x)-\sin^2(y)}
\geq 0,
\]
because it generally holds that
if $t-1\geq 0$ then $\sqrt{t}-1=(t-1)/(\sqrt{t}+1)\geq 0$.
\end{proof}

\begin{lemma}
\label{lem:cosh-2-SE}
Let $x,\,y\in\mathbb{R}$ with $|y|<\pi/2$.
Then, it holds that
\[
 \sup_{x\in\mathbb{R}}
\left|\frac{\cosh^2[(x+\imnum y)/2]}{\cosh(x+\imnum y)}\right|
\leq
\frac{\cos^2(y/2)}{\cos y}
=
\frac{1}{2}\left(1+\frac{1}{\cos y}\right).
\]
\end{lemma}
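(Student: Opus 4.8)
The plan is to reduce this one-variable supremum (over $x\in\mathbb{R}$, with $y$ fixed) to a purely algebraic inequality. First I would set $\zeta=x+\imnum y$ and evaluate the two moduli separately using the elementary identity $|\cosh(a+\imnum b)|^2=\sinh^2 a+\cos^2 b$. Applying it with $(a,b)=(x/2,y/2)$ in the numerator and with $(a,b)=(x,y)$ in the denominator gives
\[
\left|\frac{\cosh^2[(x+\imnum y)/2]}{\cosh(x+\imnum y)}\right|
=\frac{\sinh^2(x/2)+\cos^2(y/2)}{\sqrt{\sinh^2 x+\cos^2 y}},
\]
so the quantity to be bounded is an explicit real-valued function of $x$ and $y$.

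Next I would substitute $s=\sinh^2(x/2)\ge 0$ and $c=\cos^2(y/2)$, observing that $|y|<\pi/2$ forces $|y/2|<\pi/4$ and hence $c\in(1/2,1]$. Using the double-angle relations $\sinh^2 x=4\sinh^2(x/2)\cosh^2(x/2)=4s(1+s)$ and $\cos y=2\cos^2(y/2)-1=2c-1$, the claim (after squaring) is equivalent to
\[
\frac{(s+c)^2}{4s(1+s)+(2c-1)^2}\le\frac{c^2}{(2c-1)^2},
\]
whose right-hand side is precisely $\{\cos^2(y/2)/\cos y\}^2$. Clearing the (positive) denominators and expanding, I would verify that the difference of the two sides factors as
\[
c^2\bigl[4s(1+s)+(2c-1)^2\bigr]-(s+c)^2(2c-1)^2
=s\bigl[\,2c(-4c^2+6c-1)+s(4c-1)\,\bigr].
\]

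It then remains to show that the bracket is nonnegative for every admissible $c$. The coefficient of $s$ is $4c-1>1>0$ since $c>1/2$, and for the constant term I would locate the roots of $4c^2-6c+1$, namely $(3\pm\sqrt5)/4\approx 0.19,\,1.31$; because $(1/2,1]$ lies strictly inside this root interval, $-4c^2+6c-1\ge 0$ throughout, so the whole bracket is nonnegative and the factored expression is $\ge 0$ for all $s\ge 0$. This proves the inequality, with equality at $s=0$ (that is, $x=0$), so that the supremum equals $\cos^2(y/2)/\cos y$ and is attained. The closing identity $\cos^2(y/2)/\cos y=\tfrac12(1+1/\cos y)$ is immediate from $\cos^2(y/2)=(1+\cos y)/2$. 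The main obstacle is the sign analysis of the constant term $-4c^2+6c-1$: it is positive on our range but would fail for $c$ near or below $1/2$ outside the root interval, so the argument genuinely relies on $|y|<\pi/2$ (hence $c>1/2$); pinning down the correct factorization in the previous step is the only other place requiring care.
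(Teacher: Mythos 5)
Your proof is correct, and it is verifiable as stated: the modulus identity $|\cosh(a+\imnum b)|^2=\sinh^2 a+\cos^2 b$ is right, the substitutions $s=\sinh^2(x/2)$, $c=\cos^2(y/2)\in(1/2,1]$ are legitimate, and the factorization $c^2[4s(1+s)+(2c-1)^2]-(s+c)^2(2c-1)^2=s[2c(-4c^2+6c-1)+s(4c-1)]$ checks out, with both bracketed coefficients positive on $(1/2,1]$. However, your route differs from the paper's. The paper writes the same modulus as $(\cosh x+\cos y)\big/\bigl(2\sqrt{\cosh^2 x-\sin^2 y}\bigr)$ and finishes with two one-line monotone estimates: first $\cosh^2 x-\sin^2 y\geq\cosh^2 x\,(1-\sin^2 y)$ (valid since $\cosh^2 x\geq 1$), which replaces the square root by $\cosh x\cos y$; then the observation that $(\cosh x+\cos y)/(2\cosh x\cos y)$ is decreasing in $\cosh x$, so its maximum is at $\cosh x=1$, giving $(1+\cos y)/(2\cos y)$ directly. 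That argument buys brevity and avoids any squaring or polynomial bookkeeping; yours buys exactness, since the squared inequality is an identity-plus-nonnegative-remainder computation that pinpoints $x=0$ as the unique maximizer and shows the stated bound is actually the supremum, attained — a sharpness statement the paper's proof also implicitly enjoys (both of its estimates are equalities at $x=0$) but never records. One small quibble with your closing remark: the constant term $-4c^2+6c-1$ stays nonnegative down to $c=(3-\sqrt5)/4\approx 0.19$, well below $1/2$, so the squared inequality does not fail just below $c=1/2$; what genuinely requires $c>1/2$ (i.e., $|y|<\pi/2$) is recovering the unsquared claim, since $\cos y=2c-1$ changes sign there and the right-hand side ceases to make sense as an upper bound.
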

\begin{proof}
First, we have
\[
 \left|\frac{\cosh^2[(x+\imnum y)/2]}{\cosh(x+\imnum y)}\right|
=\frac{\cosh(x)+\cos(y)}{2\sqrt{\cosh^2(x)-\sin^2(y)}}
\leq\frac{\cosh(x)+\cos(y)}{2\sqrt{\cosh^2(x)\{1-\sin^2(y)\}}},
\]
which is then estimated as
\[
\frac{\cosh(x)+\cos(y)}{2\sqrt{\cosh^2(x)\{1-\sin^2(y)\}}}
=
\frac{\cosh(x)+\cos(y)}{2\cosh(x)\cos(y)}
\leq \frac{1+\cos(y)}{2\cdot 1\cdot\cos(y)}.
\]
This completes the proof.
\end{proof}

Lemma~\ref{lem:SE3-Sinc-Quad-check}
is proved as follows.

\begin{proof}
From the inequality~\eqref{leq:Sinc-quad-case2-alpha-beta} with $i=3$,
it follows that
\begin{align*}
|F(\zeta)|\leq
 K
\left|\frac{\arcsinh(\rme^{\zeta})}{1+\arcsinh(\rme^{\zeta})}\right|^{\alpha-1}
\left|\frac{1}{\rme^{\zeta}+\sqrt{1+\rme^{2\zeta}}}\right|^{\beta}
\left|\frac{1}{1+\rme^{-2\zeta}}\right|^{1/2}.
\end{align*}
First, consider the case $1\leq\alpha$.
From Lemmas~\ref{lem:asinh-SE},~\ref{lem:exp-asinh-SE},
and~\ref{lem:exp-2-SE}, it holds that
\begin{align*}
|F(\zeta)|
&\leq K\left|\frac{\sqrt{2}}{1+\rme^{-\zeta}}\right|^{\alpha-1}
\left|\frac{\sqrt{2}}{1+\rme^{\zeta}}\right|^{\beta}
\left|\frac{1}{1+\rme^{-2\zeta}}\right|^{1/2}\\
&\leq 2^{\beta/2}2^{(\alpha-1)/2}K
\left|\frac{1}{1+\rme^{-2\zeta}}\right|^{(\alpha-1)/2}
\left|\frac{1}{1+\rme^{2\zeta}}\right|^{\beta/2}
\left|\frac{1}{1+\rme^{-2\zeta}}\right|^{1/2}
\end{align*}
for all $\zeta\in\domD_d$. For $x\in\mathbb{R}$, it holds that
\begin{align*}
|F(x)|
&\leq K\left(\frac{1}{1+\rme^{-x}}\right)^{\alpha-1}
\left(\frac{1}{1+\rme^{x}}\right)^{\beta}
\left(\frac{1}{1+\rme^{-2x}}\right)^{1/2}\\
&\leq K \left(\frac{1}{1+\rme^{-2x}}\right)^{(\alpha-1)/2}
\left(\frac{1}{1+\rme^{2x}}\right)^{\beta/2}
\left(\frac{1}{1+\rme^{-2x}}\right)^{1/2}.
\end{align*}
This completes the proof for $1\leq\alpha$.

Next, consider the case $0<\alpha< 1$.
From Lemmas~\ref{lem:asinh-SE},~\ref{lem:exp-asinh-SE},
and~\ref{lem:exp-2-SE}, it holds that
\begin{align*}
|F(\zeta)|
&\leq K\left|\sqrt{2}(1+\rme^{-\zeta})\right|^{1-\alpha}
\left|\frac{\sqrt{2}}{1+\rme^{\zeta}}\right|^{\beta}
\left|\frac{1}{1+\rme^{-2\zeta}}\right|^{1/2}\\
&= K\left|\frac{2(1+\rme^{-\zeta})^2}{1+\rme^{-2\zeta}}\right|^{(1-\alpha)/2}
\left|\frac{\sqrt{2}}{1+\rme^{\zeta}}\right|^{\beta}
\left|\frac{1}{1+\rme^{-2\zeta}}\right|^{\alpha/2}.
\end{align*}
Furthermore, from Lemma~\ref{lem:cosh-2-SE}, it holds
for $\zeta=x+\imnum y\in\domD_d$ that
\[
 \left|\frac{2(1+\rme^{-\zeta})^2}{1+\rme^{-2\zeta}}\right|
=4\left|\frac{\cosh^2(\zeta/2)}{\cosh(\zeta)}\right|
\leq 2\left(1+\frac{1}{\cos y}\right).
\]
Then, using Lemma~\ref{lem:exp-2-SE}, we have
\[
 |F(\zeta)|\leq 2^{\beta/2}K
\left\{2\left(1+\frac{1}{\cos d}\right)\right\}^{(1-\alpha)/2}
\left|\frac{1}{1+\rme^{2\zeta}}\right|^{\beta/2}
\left|\frac{1}{1+\rme^{-2\zeta}}\right|^{\alpha/2}
\]
for all $\zeta\in\domD_d$. For $x\in\mathbb{R}$, it holds that
\begin{align*}
|F(x)|
&\leq K\left(\sqrt{2}(1+\rme^{-x})\right)^{1-\alpha}
\left(\frac{1}{1+\rme^{x}}\right)^{\beta}
\left(\frac{1}{1+\rme^{-2x}}\right)^{1/2}\\
&= K\left(\frac{2(1+\rme^{-x})^2}{1+\rme^{-2x}}\right)^{(1-\alpha)/2}
\left(\frac{1}{1+\rme^{x}}\right)^{\beta}
\left(\frac{1}{1+\rme^{-2x}}\right)^{\alpha/2}\\
&\leq K\left(2\left(1+\frac{1}{\cos 0}\right)\right)^{(1-\alpha)/2}
\left(\frac{1}{1+\rme^{2x}}\right)^{\beta/2}
\left(\frac{1}{1+\rme^{-2x}}\right)^{\alpha/2}.
\end{align*}
This completes the proof for $0<\alpha< 1$.
\end{proof}

\subsection{In the Case of the DE Transformation}

We need the following definition
in the case of the DE transformation.
\begin{definition}
Let $L,\,R,\,\alpha,\,\beta$ be positive constants,
and $d$ be a constant with $0<d<\pi/2$.
Then, $\LC_{L,R,\alpha,\beta}^{\textDEg}(\domD_d)$
denotes a family of functions $F$
that are analytic on $\domD_d$,
and for all $\zeta\in\domD_d$ and $x\in\mathbb{R}$, satisfy
\begin{align}
 |F(\zeta)|&\leq \frac{(\pi/2)L|\cosh\zeta|}{|1+\rme^{-\pi\sinh\zeta}|^{\alpha/2}|1+\rme^{\pi\sinh\zeta}|^{\beta/2}},
\label{ineq:LC-DE-complex}\\
 |F(x)|&\leq \frac{(\pi/2)R\cosh x}{(1+\rme^{-\pi\sinh x})^{\alpha/2}(1+\rme^{\pi\sinh x})^{\beta/2}}.
\label{ineq:LC-DE-real}
\end{align}
\end{definition}
If $F$ belongs to this function space,
the errors of the Sinc quadrature
and Sinc indefinite integration are estimated as below.
%
%
The proofs are omitted because they are quite similar to
the existing theorems for
case~4~\cite[Theorems~2.14~and~2.16]{okayama09:_error}.
\begin{theorem}
\label{thm:DE-Sinc-Quad-overall}
Let $F\in\LC_{L,R,\alpha,\beta}^{\textDEg}(\domD_d)$,
let $\mu=\min\{\alpha,\,\beta\}$,
let $\nu=\max\{\alpha,\,\beta\}$,
let $h$ be defined as~\eqref{eq:Def-DE-h},
and let $M$ and $N$ be defined as~\eqref{eq:Def-DE-Sinc-MN}.
Furthermore,
let $n$ be taken sufficiently large so that
$n\geq (\nu \rme)/(8 d)$,
$M h \geq x_{\alpha/2}$, and $N h\geq x_{\beta/2}$ hold.
Then,
\begin{align*}
&\left|
\int_{-\infty}^{\infty}F(x)\diff x
-h\sum_{k=-M}^{N} F(kh)
\right|\\
&\leq \frac{2}{\mu}
\left[
\frac{2L}{(1-\rme^{-\pi\mu\rme/4})\{\cos(\frac{\pi}{2}\sin d)\}^{(\alpha+\beta)/2}\cos d}
+R\rme^{\pi\nu/4}
\right]\rme^{-2\pi d n/\log(8 d n/\mu)}.
\end{align*}
\end{theorem}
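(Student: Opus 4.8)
The plan is to follow the template used for the finite-interval case in Okayama et al.~\cite[Theorem~2.14]{okayama09:_error}, splitting the quadrature error into a \emph{discretization error} and a \emph{truncation error}:
\begin{align*}
\left|
\int_{-\infty}^{\infty}F(x)\diff x - h\sum_{k=-M}^{N} F(kh)
\right|
&\leq
\left|
\int_{-\infty}^{\infty}F(x)\diff x - h\sum_{k=-\infty}^{\infty} F(kh)
\right|\\
&\quad + h\sum_{k=-\infty}^{-M-1}|F(kh)|
+ h\sum_{k=N+1}^{\infty}|F(kh)|.
\end{align*}
The first term will be controlled by the complex bound~\eqref{ineq:LC-DE-complex} together with the standard discretization estimate for the infinite trapezoidal rule, while the two tail sums will be controlled by the real-axis bound~\eqref{ineq:LC-DE-real}.

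For the discretization error I would invoke the classical bound for the infinite trapezoidal rule (as used in the proof of \cite[Theorem~2.14]{okayama09:_error}): for $F$ analytic in $\domD_d$ it is at most $\rme^{-2\pi d/h}/(1-\rme^{-2\pi d/h})$ times the boundary integral $\int_{-\infty}^{\infty}\bigl(|F(x+\imnum d)|+|F(x-\imnum d)|\bigr)\diff x$. The work then lies in estimating this boundary integral using~\eqref{ineq:LC-DE-complex}. On the lines $\Im\zeta=\pm d$ one writes $\sinh\zeta=\sinh(x)\cos d\pm\imnum\cosh(x)\sin d$ and $|\cosh\zeta|=\sqrt{\cosh^2 x-\sin^2 d}\leq\cosh x$; the delicate point is to bound the denominator moduli $|1+\rme^{\mp\pi\sinh\zeta}|$ from below on the boundary so as to produce the factors $\{\cos(\frac{\pi}{2}\sin d)\}^{(\alpha+\beta)/2}$ and, after the substitution $s=\sinh(x)\cos d$, the factor $\cos d$ from $\diff s=\cosh(x)\cos d\diff x$. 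After this reduction each boundary-line integral becomes a constant multiple of $\int_{-\infty}^{\infty}\{(1+\rme^{-\pi s})^{\alpha/2}(1+\rme^{\pi s})^{\beta/2}\}^{-1}\diff s$, and the elementary bound $\int\leq 2/(\pi\alpha)+2/(\pi\beta)\leq 4/(\pi\mu)$ gives the combined boundary integral $\leq\frac{2}{\mu}\cdot\frac{2L}{\{\cos(\frac{\pi}{2}\sin d)\}^{(\alpha+\beta)/2}\cos d}$. Finally, the hypothesis $n\geq(\nu\rme)/(8d)$ ensures $8dn/\mu\geq\rme$, whence $2\pi d/h=2\pi dn/\log(8dn/\mu)\geq\pi\mu\rme/4$, so the $n$-dependent factor $1/(1-\rme^{-2\pi d/h})$ may be replaced by the constant $1/(1-\rme^{-\pi\mu\rme/4})$.

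For the truncation error I would bound the right tail (the left tail being symmetric). For $k>N>0$ the bound~\eqref{ineq:LC-DE-real} gives, after discarding $(1+\rme^{-\pi\sinh(kh)})^{\alpha/2}\geq1$ and using $(1+\rme^{\pi\sinh(kh)})^{\beta/2}\geq\rme^{(\beta/2)\pi\sinh(kh)}$, the estimate $|F(kh)|\leq(\pi/2)R\cosh(kh)\rme^{-(\beta/2)\pi\sinh(kh)}$. Since $Nh\geq x_{\beta/2}$, the summand $\cosh(x)\rme^{-(\beta/2)\pi\sinh x}$ is monotone on $[Nh,\infty)$, so the sum is dominated by $\int_{Nh}^{\infty}\cosh(x)\rme^{-(\beta/2)\pi\sinh x}\diff x=\frac{2}{\pi\beta}\rme^{-(\beta/2)\pi\sinh(Nh)}$. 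Then $\sinh(Nh)\geq\frac12\rme^{Nh}-\frac12$ extracts a factor $\rme^{\pi\beta/4}\leq\rme^{\pi\nu/4}$, and the choice~\eqref{eq:Def-DE-Sinc-MN} of $N$ together with $h=\log(8dn/\mu)/n$ gives $\rme^{Nh}\geq 8dn/\beta$, so that $(\beta\pi/4)\rme^{Nh}\geq 2\pi dn$; since $\log(8dn/\mu)\geq1$ this yields $\rme^{-(\beta\pi/4)\rme^{Nh}}\leq\rme^{-2\pi dn}\leq\rme^{-2\pi dn/\log(8dn/\mu)}$. Adding the symmetric left-tail contribution and using $\frac1\alpha+\frac1\beta\leq\frac2\mu$ produces the truncation bound $\frac{2}{\mu}R\rme^{\pi\nu/4}\rme^{-2\pi dn/\log(8dn/\mu)}$. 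Summing the two bounds reproduces the claimed constant.

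The hard part is the discretization step: establishing the sharp lower bounds for $|1+\rme^{\pm\pi\sinh\zeta}|$ on the two boundary lines that yield exactly $\{\cos(\frac{\pi}{2}\sin d)\}^{(\alpha+\beta)/2}$ and $\cos d$. These are the DE counterparts of the SE boundary lemmas (Lemmas~\ref{lem:asinh-SE}--\ref{lem:cosh-2-SE}) and demand the same kind of maximum-modulus and careful real-variable estimation, with the extra subtlety that $\Im(\pi\sinh\zeta)=\pi\cosh(x)\sin d$ grows with $x$, so the cancellation in $1+\rme^{\pm\pi\sinh\zeta}$ must be offset by the growth of $|\Re(\pi\sinh\zeta)|$. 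By contrast, once the monotonicity threshold $x_{\beta/2}$ (resp.\ $x_{\alpha/2}$) is in hand, the truncation step reduces to a routine integral comparison.
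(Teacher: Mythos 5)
Your proposal is correct and is essentially the paper's own argument: the paper omits the proof of Theorem~\ref{thm:DE-Sinc-Quad-overall} precisely because it follows the discretization-plus-truncation template of the finite-interval case~\cite[Theorem~2.14]{okayama09:_error}, which is exactly what you reconstruct, and your constants all check out (boundary integral $\leq 4/(\pi\mu)$ after the substitution $s=\sinh(x)\cos d$, $2\pi d/h\geq \pi\mu\rme/4$ via $t/\log t\geq\rme$ for $t=8dn/\mu\geq\rme$, and $\rme^{Nh}\geq 8dn/\beta$ from~\eqref{eq:Def-DE-Sinc-MN}). The one step you flag as the hard part---the lower bounds on $|1+\rme^{\pm\pi\sinh\zeta}|$ along $\Im\zeta=\pm d$---requires no fresh maximum-modulus work, since it is exactly Lemma~\ref{lem:DEfunc-estim} (Okayama et al.), already stated in the paper.
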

\begin{theorem}
\label{thm:DE-Sinc-Indef-overall}
Let $F\in\LC_{L,R,\alpha,\beta}^{\textDEg}(\domD_d)$,
let $\mu=\min\{\alpha,\,\beta\}$,
let $\nu=\max\{\alpha,\,\beta\}$,
let $h$ be defined as~\eqref{eq:Def-DE-h-half},
and let $M$ and $N$ be defined as~\eqref{eq:Def-DE-Sinc-MN}.
Furthermore,
let $n$ be taken sufficiently large so that
$n\geq (\nu \rme)/(4 d)$,
$M h \geq x_{\alpha/2}$, and $N h\geq x_{\beta/2}$ hold.
Then,
\begin{align*}
&\sup_{\xi\in\mathbb{R}}\left|
\int_{-\infty}^{\xi}F(x)\diff x
-\sum_{k=-M}^{N} F(kh)J(k,h)(\xi)
\right|\\
&\leq \frac{2}{\mu d}
\left[
\frac{L}{(1-\rme^{-\pi\mu\rme/2})\{\cos(\frac{\pi}{2}\sin d)\}^{(\alpha+\beta)/2}\cos d}
+R\rme^{\pi(\alpha+\beta)/4}
\right]\epsilon^{\textDEg}_{d,\mu}(n).
\end{align*}
\end{theorem}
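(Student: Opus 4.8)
The plan is to follow the structure of the case~4 argument in Okayama et al.~\cite[Theorem~2.16]{okayama09:_error}, splitting the total error into a \emph{discretization} part (the sinc indefinite integration error on the whole line) and a \emph{truncation} part (the discarded tails). Writing $\mathcal{E}(\xi)$ for the quantity inside the supremum and inserting the full infinite sum $\sum_{k=-\infty}^{\infty}F(kh)J(k,h)(\xi)$, the triangle inequality gives
\[
\mathcal{E}(\xi)\le\left|\int_{-\infty}^{\xi}F(x)\diff x-\sum_{k=-\infty}^{\infty}F(kh)J(k,h)(\xi)\right|+\sum_{k=-\infty}^{-M-1}|F(kh)|\,|J(k,h)(\xi)|+\sum_{k=N+1}^{\infty}|F(kh)|\,|J(k,h)(\xi)|,
\]
where the first term is the discretization error $E_{\mathrm{d}}$ and the last two form the truncation error $E_{\mathrm{t}}$.

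First I would bound $E_{\mathrm{d}}$. Since $J(k,h)(\xi)$ is the indefinite integral of the sinc cardinal function, $E_{\mathrm{d}}(\xi)$ is the indefinite integral of the sinc interpolation error of $F$; using the analyticity of $F$ on $\domD_d$ and the standard contour/residue representation of that error (the $\Hone(\domD_d)$ theory), it is controlled by the line integral of $|F|$ along $\Im\zeta=\pm d$. Substituting the DE profile into the complex bound~\eqref{ineq:LC-DE-complex} and integrating produces the factor $\{\cos(\tfrac{\pi}{2}\sin d)\}^{-(\alpha+\beta)/2}(\cos d)^{-1}$, the geometric factor $(1-\rme^{-\pi\mu\rme/2})^{-1}$, and the decay $\rme^{-\pi d/h}$. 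With $h=\log(4dn/\mu)/n$ from~\eqref{eq:Def-DE-h-half} we have $\rme^{-\pi d/h}=\rme^{-\pi d n/\log(4dn/\mu)}$, and the $1/d$ intrinsic to the indefinite estimate together with the leftover $h=\log(4dn/\mu)/n$ combine into the prefactor $1/(\mu d)$ and the rate $\epsilon^{\textDEg}_{d,\mu}(n)=h\rme^{-\pi d/h}$.

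Next I would bound $E_{\mathrm{t}}$ using the real-axis bound~\eqref{ineq:LC-DE-real} and the uniform estimate $\sup_{\xi}|J(k,h)(\xi)|\le 1.1\,h$, which holds because $0\le\tfrac12+\tfrac1\pi\Si(\cdot)\le 1.1$. This reduces $E_{\mathrm{t}}$ to $1.1\,h$ times the tails $\sum_{k>N}$ and $\sum_{k<-M}$ of $\cosh(kh)(1+\rme^{-\pi\sinh(kh)})^{-\alpha/2}(1+\rme^{\pi\sinh(kh)})^{-\beta/2}$. The hypotheses $Mh\ge x_{\alpha/2}$ and $Nh\ge x_{\beta/2}$ guarantee, via the monotonicity encoded in $x_{\gamma}$ (cf.~\cite[Proposition~4.17]{okayama09:_error}), that each summand is monotone past the endpoints, so each tail is dominated by its first term and then by a geometric series. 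Evaluating the dominant term at $-Mh$ and $Nh$ with the choices of $M,N$ in~\eqref{eq:Def-DE-Sinc-MN} collapses the $\cosh\cdot\rme^{\mp\pi\mu\sinh/2}$ factors, and the accompanying $h$ yields $\epsilon^{\textDEg}_{d,\mu}(n)$ times the endpoint factor $\rme^{\pi(\alpha+\beta)/4}$, i.e.\ the $R\,\rme^{\pi(\alpha+\beta)/4}$ contribution.

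Finally I would add the two bounds, factor out $\tfrac{2}{\mu d}$ and $\epsilon^{\textDEg}_{d,\mu}(n)$, and collect constants to reach the stated inequality. I expect $E_{\mathrm{d}}$ to be the main obstacle: the indefinite case is more delicate than the quadrature case because one integrates the pointwise interpolation error, so obtaining the correct single-exponent rate $\rme^{-\pi d/h}$ (rather than the $\rme^{-2\pi d/h}$ of Theorem~\ref{thm:DE-Sinc-Quad-overall}) together with the extra $h/d$ factor \emph{and} a fully explicit, computable constant requires tracking the $\Hone(\domD_d)$ estimate and the sine-integral tail exactly as in~\cite[Theorem~2.16]{okayama09:_error}; the truncation estimate is then routine once the monotonicity from $x_{\gamma}$ is in hand.
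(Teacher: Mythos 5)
Your proposal takes essentially the same route as the paper: for this theorem the paper deliberately omits the proof, stating it is ``quite similar'' to the case-4 results of Okayama et al.~\cite[Theorems~2.14~and~2.16]{okayama09:_error}, and your discretization/truncation decomposition---the $\Hone(\domD_d)$ contour estimate along $\Im\zeta=\pm d$ applied to~\eqref{ineq:LC-DE-complex} for the discretization part, and the bound $\sup_{\xi}|J(k,h)(\xi)|\leq 1.1h$ combined with the $x_{\gamma}$-monotonicity from~\cite[Proposition~4.17]{okayama09:_error} and the real-line bound~\eqref{ineq:LC-DE-real} for the tails---is precisely that argument. Your constant bookkeeping is also consistent with the stated bound, including the single-exponent rate $\rme^{-\pi d/h}$ with $h$ from~\eqref{eq:Def-DE-h-half}, the geometric factor $(1-\rme^{-\pi\mu\rme/2})^{-1}$ (which arises from $2\pi d/h\geq \pi\mu\rme/2$ under $n\geq\nu\rme/(4d)$), and the endpoint factor $\rme^{\pi(\alpha+\beta)/4}$ absorbing the $\rme^{-x}$ correction in $\sinh$ at $\pm Mh,\,Nh$.
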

%

In view of Theorems~\ref{thm:DE-Sinc-Quad-overall}
and~\ref{thm:DE-Sinc-Indef-overall},
our project is completed by checking
$F\in\LC_{L,R,\alpha,\beta}^{\textDEg}(\domD_d)$
in each case: 1, 2, and 3.
Let us check each case one by one.
The next lemma is useful for the proofs.
\begin{lemma}[Okayama et al.~{\cite[Lemma~4.22]{okayama09:_error}}]
\label{lem:DEfunc-estim}
Let $x,\,y\in\mathbb{R}$ with $|y|<\pi/2$, and let $\zeta=x+\imnum y$.
Then,
\begin{align*}
\left|\frac{1}{1+\rme^{\pi\sinh\zeta}}\right|
&\leq\frac{1}{(1+\rme^{\pi\sinh(x)\cos y})\cos(\frac{\pi}{2}\sin y)},\\
\left|\frac{1}{1+\rme^{-\pi\sinh\zeta}}\right|
&\leq\frac{1}{(1+\rme^{-\pi\sinh(x)\cos y})\cos(\frac{\pi}{2}\sin y)}.
\end{align*}
\end{lemma}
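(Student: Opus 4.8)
The plan is to prove both displayed bounds by reducing each to a single real inequality and then to a one--variable estimate. Writing $U=\pi\sinh x\cos y$ and $V=\pi\cosh x\sin y$, the two bounds differ only by $U\mapsto -U$ (the modulus $|1+\rme^{-\pi\sinh\zeta}|$ equals $|1+\rme^{-U+\imnum V}|$ after conjugation, while $V$ and $\cos(\frac{\pi}{2}\sin y)$ are unchanged); since the core estimate below depends on $U$ only through $\cosh^2(U/2)$, it suffices to treat the second inequality, i.e. $|1+\rme^{\pi\sinh\zeta}|\geq(1+\rme^{U})\cos(\frac{\pi}{2}\sin y)$. Using $\sinh\zeta=\sinh x\cos y+\imnum\cosh x\sin y$, a direct computation gives the identity
\[
|1+\rme^{\pi\sinh\zeta}|^2=(1+\rme^{U})^2-4\rme^{U}\sin^2(V/2).
\]
Comparing with $(1+\rme^{U})^2\cos^2(\frac{\pi}{2}\sin y)$ and invoking $\frac{(1+\rme^{U})^2}{4\rme^{U}}=\cosh^2(U/2)$, the claim becomes the purely real inequality
\[
\cosh^2\!\Bigl(\tfrac{\pi}{2}\sinh x\cos y\Bigr)\sin^2\!\Bigl(\tfrac{\pi}{2}\sin y\Bigr)\geq\sin^2\!\Bigl(\tfrac{\pi}{2}\cosh x\sin y\Bigr).
\]

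By evenness in $x$ and in $y$ I may assume $x\geq0$ and $0\leq y<\pi/2$, so that every factor is nonnegative and I can take square roots. Abbreviating $p=\frac{\pi}{2}\cos y$ and $q=\frac{\pi}{2}\sin y$ (hence $p^2+q^2=(\pi/2)^2$ and $0\le q<\pi/2$), the goal reads $\cosh(p\sinh x)\sin q\geq|\sin(q\cosh x)|$. The idea is to linearise the right--hand side about $x=0$, where equality holds: by the angle--addition formula together with $|\sin t|\le|t|$ and $|\cos t|\le1$ one gets $|\sin(q\cosh x)|\le\sin q+q\cos q\,(\cosh x-1)$, so it suffices to prove $\sin q\,(\cosh(p\sinh x)-1)\ge q\cos q\,(\cosh x-1)$. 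The convexity estimates $\cosh(p\sinh x)-1\ge\frac12 p^2\sinh^2 x$ and $\cosh x-1=\sinh^2x/(\cosh x+1)\le\frac12\sinh^2x$ then cancel the common factor $\frac12\sinh^2x$ and reduce everything to
\[
\Bigl(\tfrac{\pi^2}{4}-q^2\Bigr)\sin q\ \geq\ q\cos q,\qquad 0\le q<\tfrac{\pi}{2},
\]
which I would dispatch by elementary calculus: for $q\le\sqrt{\pi^2/4-1}$ one has $\frac{\pi^2}{4}-q^2\ge1$ together with $\tan q\ge q$, and near $q=\pi/2$ the function $(\frac{\pi^2}{4}-q^2)\sin q-q\cos q$ vanishes at the endpoint with negative derivative there, the intermediate range being covered by a sign analysis of this function.

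I expect the main obstacle to be the passage from the oscillatory factor $\sin^2(\frac{\pi}{2}\cosh x\sin y)$ to a controllable quantity. A naive pointwise comparison with $\cos^2(\frac{\pi}{2}\sin y)$ fails, because $\cosh x\ge1$ drives the argument past $\pi/2$ into regions where the sine is arbitrarily small; the compensation must come entirely from the exponential growth encoded in $\cosh^2(U/2)$. The linearisation is the step that converts this into a tractable estimate, and the decisive structural fact is that the constraint $p^2+q^2=(\pi/2)^2$ is precisely what makes the resulting one--variable inequality valid. That inequality holds with room to spare, so the slack introduced by the linearisation is harmless, but the degenerate limit $q\to\pi/2$ (where both sides vanish) is the point that has to be checked with some care.
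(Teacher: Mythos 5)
The paper contains no internal proof for you to match: Lemma~\ref{lem:DEfunc-estim} is imported verbatim from Okayama et al.\ \cite[Lemma~4.22]{okayama09:_error}, and Section~\ref{sec:proofs} simply uses it. So your argument must stand on its own, and it essentially does. Every step of your reduction checks out: the identity $|1+\rme^{U+\imnum V}|^2=(1+\rme^{U})^2-4\rme^{U}\sin^2(V/2)$ with $U=\pi\sinh x\cos y$, $V=\pi\cosh x\sin y$; the observation that both displayed bounds collapse to the single core inequality $\cosh^2(U/2)\sin^2\bigl(\frac{\pi}{2}\sin y\bigr)\geq\sin^2(V/2)$ because $(1+\rme^{\pm U})^2/(4\rme^{\pm U})=\cosh^2(U/2)$ is even in $U$ (your labelling of which inequality is ``second'' is off, but harmlessly so); the linearisation $|\sin(q\cosh x)|\leq\sin q+q\cos q\,(\cosh x-1)$, valid since $0\leq q<\pi/2$ keeps $\sin q,\cos q\geq 0$; and the two convexity bounds $\cosh(p\sinh x)-1\geq\frac12 p^2\sinh^2 x$ and $\cosh x-1=\sinh^2 x/(\cosh x+1)\leq\frac12\sinh^2 x$, which correctly cancel $\frac12\sinh^2x$ and reduce everything to $\bigl(\frac{\pi^2}{4}-q^2\bigr)\sin q\geq q\cos q$ on $[0,\pi/2)$. (Your core inequality is equivalent to the classical bound $|\cosh(\frac{\pi}{2}\sinh\zeta)|\geq\cosh(\frac{\pi}{2}\sinh x\cos y)\cos(\frac{\pi}{2}\sin y)$ underlying DE error analyses, so you have in effect reproved the standard lemma by elementary means.)

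The one genuine hole is the final scalar inequality on the upper window. You prove it for $q\leq q_0:=\sqrt{\pi^2/4-1}\approx 1.211$ via $\tan q\geq q$, but for $q\in(q_0,\pi/2)$ you offer only the endpoint value and derivative, which give positivity \emph{near} $q=\pi/2$, not on the whole window; ``the intermediate range being covered by a sign analysis'' is a promissory note, not a proof. Fortunately the patch is two lines: for $q\geq q_0$ write $q\cos q=q\sin\bigl(\frac{\pi}{2}-q\bigr)\leq\frac{\pi}{2}\bigl(\frac{\pi}{2}-q\bigr)$, while
\begin{equation*}
\Bigl(\tfrac{\pi^2}{4}-q^2\Bigr)\sin q
=\Bigl(\tfrac{\pi}{2}-q\Bigr)\Bigl(\tfrac{\pi}{2}+q\Bigr)\sin q
\geq\Bigl(\tfrac{\pi}{2}-q\Bigr)\Bigl(\tfrac{\pi}{2}+q_0\Bigr)\sin q_0
>2.6\,\Bigl(\tfrac{\pi}{2}-q\Bigr)
>\tfrac{\pi}{2}\Bigl(\tfrac{\pi}{2}-q\Bigr),
\end{equation*}
since $\sin$ is increasing on $[q_0,\pi/2]$ and $\bigl(\frac{\pi}{2}+q_0\bigr)\sin q_0\approx 2.78\times 0.94>2.6>\pi/2$. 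With this insertion your proof is complete and self-contained, so the lemma no longer needs to rest on the external citation.
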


\subsubsection{Proofs in Case 1
(Theorems~\ref{thm:DE1-Sinc-Quad-explicit}
and~\ref{thm:DE1-Sinc-Indef})}

The claims of
Theorems~\ref{thm:DE1-Sinc-Quad-explicit}
and~\ref{thm:DE1-Sinc-Indef} follow
from the next lemma.

\begin{lemma}
Let the assumptions in Theorem~\ref{thm:DE1-Sinc-Quad-explicit}
or Theorem~\ref{thm:DE1-Sinc-Indef}
be fulfilled.
Then, the function $F(\zeta)=f(\DEt{1}(\zeta))\DEtDiv{1}(\zeta)$
belongs to $\LC_{L,R,\alpha,\beta}^{\textDEg}(\domD_d)$
with $L=2^{\nu}K/\{\cos(\frac{\pi}{2}\sin d)\}^{(\nu-\mu)/2}$
and $R=2^{\nu}K$.
\end{lemma}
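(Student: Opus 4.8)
The plan is to follow the template of the case~1 proof for the SE transformation, replacing the elementary identity for $\cosh\zeta$ by its double-exponential analogue and invoking Lemma~\ref{lem:DEfunc-estim} at the one place where a denominator can vanish. First I would record that $\DEtDiv{1}(\zeta)=(\pi/2)\cosh\zeta\,\cosh[(\pi/2)\sinh\zeta]$, and that the identity $1+\sinh^2 w=\cosh^2 w$ (valid for complex $w$) gives $|E_1(\DEt{1}(\zeta);\gamma)|=|\cosh[(\pi/2)\sinh\zeta]|^{-(\gamma+1)}$. Combining these, the hypotheses~\eqref{leq:Sinc-quad-case1-alpha} and~\eqref{leq:Sinc-quad-case1-beta} reduce to
\[
|F(\zeta)|\leq \frac{(\pi/2)K|\cosh\zeta|}{|\cosh[(\pi/2)\sinh\zeta]|^{\gamma}},
\]
with $\gamma=\alpha$ for $\zeta\in\domD_d^{-}$ and $\gamma=\beta$ for $\zeta\in\domD_d^{+}$.

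The key step is the factorization $|\cosh[(\pi/2)\sinh\zeta]|^2=\tfrac14|1+\rme^{-\pi\sinh\zeta}|\,|1+\rme^{\pi\sinh\zeta}|$, which follows from $1+\rme^{\mp\pi\sinh\zeta}=2\rme^{\mp(\pi/2)\sinh\zeta}\cosh[(\pi/2)\sinh\zeta]$ after taking moduli and multiplying. This turns the bound above into the \emph{symmetric} estimate
\[
|F(\zeta)|\leq \frac{(\pi/2)\,2^{\gamma}K\,|\cosh\zeta|}{|1+\rme^{-\pi\sinh\zeta}|^{\gamma/2}\,|1+\rme^{\pi\sinh\zeta}|^{\gamma/2}},
\]
and it remains to pass from the symmetric exponents $(\gamma/2,\gamma/2)$ to the target exponents $(\alpha/2,\beta/2)$ of~\eqref{ineq:LC-DE-complex}. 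I would treat $\alpha\leq\beta$ and $\alpha>\beta$ symmetrically; take $\alpha\leq\beta$, so $\mu=\alpha$ and $\nu=\beta$.

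On $\domD_d^{-}$ (where $\gamma=\mu=\alpha$) matching the target reduces to the requirement $L\geq 2^{\mu}K\,|1+\rme^{\pi\sinh\zeta}|^{(\nu-\mu)/2}$; since $\Re(\sinh\zeta)<0$ there, the crude estimate $|1+\rme^{\pi\sinh\zeta}|\leq 1+\rme^{\pi\Re(\sinh\zeta)}\leq 2$ bounds the right-hand side by $2^{(\mu+\nu)/2}K\leq 2^{\nu}K\leq L$. On $\domD_d^{+}$ (where $\gamma=\nu=\beta$) the requirement is instead $L\geq 2^{\nu}K/|1+\rme^{-\pi\sinh\zeta}|^{(\nu-\mu)/2}$, and here $\Re(\sinh\zeta)\geq 0$ permits $|1+\rme^{-\pi\sinh\zeta}|$ to become arbitrarily small, so the triangle inequality supplies no lower bound. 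This is the main obstacle, and it is resolved precisely by Lemma~\ref{lem:DEfunc-estim}: its second inequality gives $|1+\rme^{-\pi\sinh\zeta}|^{-1}\leq[(1+\rme^{-\pi\sinh(x)\cos y})\cos(\tfrac{\pi}{2}\sin y)]^{-1}$, and since $1+\rme^{-\pi\sinh(x)\cos y}\geq1$ and $\cos(\tfrac{\pi}{2}\sin y)\geq\cos(\tfrac{\pi}{2}\sin d)$ whenever $|y|<d$, the right-hand side is at most $2^{\nu}K/\{\cos(\tfrac{\pi}{2}\sin d)\}^{(\nu-\mu)/2}=L$. Finally, setting $y=0$ makes every $\cos(\tfrac{\pi}{2}\sin y)$ equal to $1$, so the identical computation along the real axis delivers~\eqref{ineq:LC-DE-real} with $R=2^{\nu}K$; the case $\alpha>\beta$ is obtained by interchanging the roles of the two half-strips, and together these establish $F\in\LC_{L,R,\alpha,\beta}^{\textDEg}(\domD_d)$.
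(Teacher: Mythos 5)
Your proof is correct and follows essentially the same route as the paper's: reduce the hypotheses to a bound by $|\cosh[(\pi/2)\sinh\zeta]|^{-\gamma}$, convert via the factorization $4|\cosh[(\pi/2)\sinh\zeta]|^2=|1+\rme^{-\pi\sinh\zeta}|\,|1+\rme^{\pi\sinh\zeta}|$, and then absorb the mismatch factor on each half-strip, invoking Lemma~\ref{lem:DEfunc-estim} exactly where the paper does (on $\domD_d^{+}$, where $|1+\rme^{-\pi\sinh\zeta}|$ can be small). The only cosmetic difference is that on $\domD_d^{-}$ you use the crude triangle inequality $|1+\rme^{\pi\sinh\zeta}|\leq 1+\rme^{\pi\Re(\sinh\zeta)}\leq 2$, while the paper writes out the exact modulus identity and then drops the brace factor, which amounts to the same estimate.
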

\begin{proof}
First, consider the case $\alpha\leq \beta$.
From the inequality~\eqref{leq:Sinc-quad-case1-alpha}, it follows that
\begin{align*}
|F(\zeta)|&\leq
\frac{K}{|1+\rme^{-\pi\sinh\zeta}|^{\alpha/2}|1+\rme^{\pi\sinh\zeta}|^{\beta/2}}
\cdot 2^{\alpha}|1+\rme^{\pi\sinh\zeta}|^{(\beta-\alpha)/2}
\intertext{for $\zeta\in\domD_d$ with $\Re\zeta<0$,
and from the inequality~\eqref{leq:Sinc-quad-case1-beta}, it
follows that}
|F(\zeta)|&\leq
\frac{K}{|1+\rme^{-\pi\sinh\zeta}|^{\alpha/2}|1+\rme^{\pi\sinh\zeta}|^{\beta/2}}
\cdot
\frac{2^{\beta}}{|1+\rme^{-\pi\sinh\zeta}|^{(\beta-\alpha)/2}}
\end{align*}
for $\zeta\in\domD_d$ with $\Re\zeta\geq 0$.
Setting $\zeta=x+\imnum y$ with $x<0$, we have
\begin{align*}
&2^{\alpha}|1+\rme^{\pi\sinh\zeta}|^{(\beta-\alpha)/2}\\
&=2^{\alpha}(1+\rme^{\pi\sinh(x)\cos y})^{(\beta-\alpha)/2}
\left\{1-\frac{\sin^2(\pi\cosh(x)\sin y)}{\cosh^2(\pi\sinh(x)\cos y)}\right\}^{(\beta-\alpha)/4}\\
&\leq 2^{\alpha}(1+\rme^{0})^{(\beta-\alpha)/2}
\left\{1- 0\right\}^{(\beta-\alpha)/4}=2^{(\alpha+\beta)/2}
\leq 2^{\beta}
\leq \frac{2^{\beta}}{\{\cos(\frac{\pi}{2}\sin y)\}^{(\beta-\alpha)/2}}.
\end{align*}
Furthermore, setting $\zeta=x+\imnum y$ with $x\geq 0$
and using Lemma~\ref{lem:DEfunc-estim}, we have
\begin{align*}
\frac{2^{\beta}}{|1+\rme^{-\pi\sinh\zeta}|^{(\beta-\alpha)/2}}
&\leq
\frac{2^{\beta}}{(1+ \rme^{-\pi\sinh(x)\cos y})^{(\beta-\alpha)/2}
\{\cos(\frac{\pi}{2}\sin y)\}^{(\beta-\alpha)/2}}\\
&\leq \frac{2^{\beta}}{(1+0)\{\cos(\frac{\pi}{2}\sin y)\}^{(\beta-\alpha)/2}}.
\end{align*}
Thus, because $\mu=\alpha$ and $\nu=\beta$ in this case,
it holds for all $\zeta\in\domD_d$ that
\begin{align*}
|F(\zeta)|&\leq
\frac{K}{|1+\rme^{-\pi\sinh\zeta}|^{\alpha/2}|1+\rme^{\pi\sinh\zeta}|^{\beta/2}}
\cdot
\frac{2^{\nu}}{\{\cos(\frac{\pi}{2}\sin d)\}^{(\nu-\mu)/2}},
\intertext{and it holds for all $x\in\mathbb{R}$ that}
|F(x)|&\leq
\frac{K}{(1+\rme^{-\pi\sinh x})^{\alpha/2}(1+\rme^{\pi\sinh x})^{\beta/2}}
\cdot
\frac{2^{\nu}}{\{\cos (\frac{\pi}{2}\sin 0)\}^{(\nu-\mu)/2}}.
\end{align*}
In the case $\alpha>\beta$, the same inequalities hold.
This completes the proof.
\end{proof}

\subsubsection{Proofs in Case 2
(Theorems~\ref{thm:DE2-Sinc-Quad-explicit}
and~\ref{thm:DE2-Sinc-Indef})}

The claims of
Theorems~\ref{thm:DE2-Sinc-Quad-explicit}
and~\ref{thm:DE2-Sinc-Indef} follow
from the next lemma.

\begin{lemma}
Let the assumptions in Theorem~\ref{thm:DE2-Sinc-Quad-explicit}
or Theorem~\ref{thm:DE2-Sinc-Indef}
be fulfilled.
Then, the function $F(\zeta)=f(\DEt{2}(\zeta))\DEtDiv{2}(\zeta)$
belongs to $\LC_{L,R,\alpha,\beta}^{\textDEg}(\domD_d)$
with $L=K$ and $R=K$.
\end{lemma}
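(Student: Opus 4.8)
The plan is to verify the two defining inequalities of $\LC_{L,R,\alpha,\beta}^{\textDEg}(\domD_d)$ directly, exploiting an exact algebraic cancellation peculiar to the transformation $\DEt{2}(\zeta)=\rme^{(\pi/2)\sinh\zeta}$. First I would apply the growth bound~\eqref{leq:Sinc-quad-case2-alpha-beta} (with $i=2$) together with $\DEtDiv{2}(\zeta)=(\pi/2)(\cosh\zeta)\rme^{(\pi/2)\sinh\zeta}$ to obtain $|F(\zeta)|\leq K|E_2(\DEt{2}(\zeta);\alpha,\beta)|\,|\DEtDiv{2}(\zeta)|$ for every $\zeta\in\domD_d$. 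Substituting $z=\rme^{(\pi/2)\sinh\zeta}$ into $E_2(z;\alpha,\beta)=z^{\alpha-1}/(1+z^2)^{(\alpha+\beta)/2}$ and noting $z^2=\rme^{\pi\sinh\zeta}$, the factor $z^{\alpha-1}=\rme^{(\alpha-1)(\pi/2)\sinh\zeta}$ coming from $E_2$ merges with the factor $\rme^{(\pi/2)\sinh\zeta}$ coming from $\DEtDiv{2}$ to produce the single exponential $\rme^{\alpha(\pi/2)\sinh\zeta}$. This collapses the estimate to
\[
|F(\zeta)|\leq \frac{(\pi/2)K|\cosh\zeta|\,|\rme^{\alpha(\pi/2)\sinh\zeta}|}{|1+\rme^{\pi\sinh\zeta}|^{(\alpha+\beta)/2}}.
\]

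To match~\eqref{ineq:LC-DE-complex} with $L=K$, I would cancel the common factor $|1+\rme^{\pi\sinh\zeta}|^{\beta/2}$ and reduce the task to $|\rme^{\alpha(\pi/2)\sinh\zeta}|\,|1+\rme^{-\pi\sinh\zeta}|^{\alpha/2}\leq|1+\rme^{\pi\sinh\zeta}|^{\alpha/2}$. Raising this to the power $2/\alpha$ (legitimate since $\alpha>0$), it becomes $|\rme^{\pi\sinh\zeta}|\,|1+\rme^{-\pi\sinh\zeta}|\leq|1+\rme^{\pi\sinh\zeta}|$. The key observation is that this in fact holds \emph{with equality}: writing $w=\pi\sinh\zeta$, the elementary identity $\rme^{w}(1+\rme^{-w})=1+\rme^{w}$ gives $|\rme^{w}|\,|1+\rme^{-w}|=|1+\rme^{w}|$. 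Hence~\eqref{ineq:LC-DE-complex} holds exactly, with $L=K$.

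For the real bound~\eqref{ineq:LC-DE-real} I would repeat the identical computation at $\zeta=x\in\mathbb{R}$, where every factor is real and positive so that the absolute values may be dropped throughout; the same identity $\rme^{w}(1+\rme^{-w})=1+\rme^{w}$ again yields equality, giving~\eqref{ineq:LC-DE-real} with $R=K$. Taken together, this establishes membership in $\LC_{L,R,\alpha,\beta}^{\textDEg}(\domD_d)$ with $L=R=K$ as claimed.

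I do not anticipate a genuine obstacle here. Unlike the $\SEt{3}$ case (cf.\ Lemma~\ref{lem:SE3-Sinc-Quad-check}), which needs the auxiliary estimates on $\arcsinh$ and $\cosh$, and unlike Case~1 for the DE transformation, which invokes Lemma~\ref{lem:DEfunc-estim}, the present case is self-contained: the only thing to track is the exponential bookkeeping, and it closes exactly. The one point worth stating explicitly in the write-up is the cancellation $\rme^{w}(1+\rme^{-w})=1+\rme^{w}$, which is precisely what forces both constants to equal $K$ with no loss.
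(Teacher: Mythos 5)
Your proposal is correct and matches the paper's approach: the paper's proof simply states that \eqref{ineq:LC-DE-complex} and \eqref{ineq:LC-DE-real} follow immediately from \eqref{leq:Sinc-quad-case2-alpha-beta} with $i=2$ and $L=R=K$, and your exponential bookkeeping via the identity $\rme^{w}(1+\rme^{-w})=1+\rme^{w}$ is exactly the routine algebra the paper leaves implicit. Nothing is missing.
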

\begin{proof}
From the inequality~\eqref{leq:Sinc-quad-case2-alpha-beta} with $i=2$,
\eqref{ineq:LC-DE-complex}
and~\eqref{ineq:LC-DE-real} immediately hold with $L=R=K$.
\end{proof}

\subsubsection{Proofs in Case 3
(Theorems~\ref{thm:DE3-Sinc-Quad-explicit}
and~\ref{thm:DE3-Sinc-Indef})}

The claims of
Theorems~\ref{thm:DE3-Sinc-Quad-explicit}
and~\ref{thm:DE3-Sinc-Indef} follow
from the next lemma.

\begin{lemma}
\label{lem:DE3-Sinc-Quad-check}
Let the assumptions in Theorem~\ref{thm:DE3-Sinc-Quad-explicit}
or Theorem~\ref{thm:DE3-Sinc-Indef}
be fulfilled.
Then, the function $F(\zeta)=f(\DEt{3$\ddagger$}(\zeta))\DEtDiv{3$\ddagger$}(\zeta)$
belongs to $\LC_{L,R,2\alpha,2\beta}^{\textDEg}(\domD_d)$
with $L=2(\tilde{c}_{d})^{1-\alpha} K$
and $R=2(\rme^{\pi/12})^{1-\alpha}K$,
where $\tilde{c}_{d}$ is the constant defined in~\eqref{tilde-c-alpha-d}.
\end{lemma}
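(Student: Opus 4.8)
The plan is to verify directly the two defining inequalities of $\LC_{L,R,2\alpha,2\beta}^{\textDEg}(\domD_d)$ for $F(\zeta)=f(\DEt{3$\ddagger$}(\zeta))\DEtDiv{3$\ddagger$}(\zeta)$. First I would record $\DEtDiv{3$\ddagger$}(\zeta)=\pi\cosh\zeta\,\rme^{\pi\sinh\zeta}/(1+\rme^{\pi\sinh\zeta})$, set $z=\DEt{3$\ddagger$}(\zeta)=\log(1+\rme^{\pi\sinh\zeta})$, and feed this into $|f(z)|\le K|E_3(z;\alpha,\beta)|$. Since $|\rme^{-\beta z}|=|1+\rme^{\pi\sinh\zeta}|^{-\beta}$ and $\alpha\le 1$ (so $|(z/(1+z))^{\alpha-1}|=|(1+z)/z|^{1-\alpha}$), this yields
\[
|F(\zeta)|\le \pi K|\cosh\zeta|\left|\frac{1+z}{z}\right|^{1-\alpha}\frac{|\rme^{\pi\sinh\zeta}|}{|1+\rme^{\pi\sinh\zeta}|^{1+\beta}}.
\]
Using $\rme^{\pi\sinh\zeta}/(1+\rme^{\pi\sinh\zeta})=1/(1+\rme^{-\pi\sinh\zeta})=1-\rme^{-z}$ and writing $G(\zeta)=(1+z)(1-\rme^{-z})/z$, this bound matches \eqref{ineq:LC-DE-complex} at parameters $2\alpha,2\beta$ (whose weight exponents halve to $\alpha,\beta$) precisely when $2K|G(\zeta)|^{1-\alpha}\le L$. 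Thus the complex bound holds with $L=2(\tilde{c}_d)^{1-\alpha}K$ exactly when $|G(\zeta)|\le\tilde{c}_d$ on $\domD_d$, and the identical real computation reduces \eqref{ineq:LC-DE-real} with $R=2(\rme^{\pi/12})^{1-\alpha}K$ to $G(x)\le\rme^{\pi/12}$ for $x\in\mathbb{R}$. The doubling of the indices is forced: the one surviving factor $1/(1+\rme^{-\pi\sinh\zeta})$ must be split into its $\alpha$-th power (kept in the weight) times its $(1-\alpha)$-th power (absorbed into $G$ alongside $(1+z)/z$).

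The real inequality is the easy half. Putting $v=\log(1+\rme^{\pi\sinh x})>0$, the quantity $G(x)=(1+v)(1-\rme^{-v})/v$ is a smooth function of the single variable $v\in(0,\infty)$ that tends to $1$ at both ends, so I would locate its unique interior maximum by elementary single-variable calculus and check that the maximal value does not exceed $\rme^{\pi/12}$.

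The complex inequality $|G(\zeta)|\le\tilde{c}_d$ is the crux, and the essential device is the splitting
\[
G(\zeta)=\frac{(1+z)(1-\rme^{-z})}{z}=(1-\rme^{-z})+\frac{1-\rme^{-z}}{z},
\]
which keeps the bounded entire factor $(1-\rme^{-z})/z$ intact instead of exposing the pole of $1/z$ at $z=0$. For the first summand I would use $|1-\rme^{-z}|=|1-(1+\rme^{\pi\sinh\zeta})^{-1}|\le 1+|1+\rme^{\pi\sinh\zeta}|^{-1}\le 1+1/\cos(\tfrac{\pi}{2}\sin d)=c_d$ by Lemma~\ref{lem:DEfunc-estim}. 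For the second summand I would write $(1-\rme^{-z})/z=\int_0^1\rme^{-tz}\diff t$, whence $|(1-\rme^{-z})/z|\le\int_0^1\rme^{-t\Re z}\diff t=(1-\rme^{-\Re z})/\Re z=:\psi(\Re z)$ with $\psi$ monotonically decreasing; since $\Re z=\log|1+\rme^{\pi\sinh\zeta}|\ge\log\cos(\tfrac{\pi}{2}\sin d)=-\log(c_d-1)\ge-\log(1+c_d)$ (again by Lemma~\ref{lem:DEfunc-estim}), monotonicity gives $|(1-\rme^{-z})/z|\le\psi(-\log(1+c_d))=c_d/\log(1+c_d)$. Adding the two estimates produces $|G(\zeta)|\le c_d+c_d/\log(1+c_d)=\tilde{c}_d$, exactly the stated constant.

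The main obstacle lies in that second summand. The composed map $\zeta\mapsto z=\log(1+\rme^{\pi\sinh\zeta})$ winds about the origin, so $\Im z$ is unbounded on $\domD_d$ and both $|z|$ and $|1+z|$ are individually unbounded; consequently no argument that factors $G$ into $(1+z)/z$ and $1-\rme^{-z}$ separately can work, since $\sup|(1+z)/z|=\infty$ near $z=0$ and $\sup|1+z|=\infty$ as $\Re z\to\infty$. It is the integral representation of $(1-\rme^{-z})/z$, combined with the lower bound on $\Re z$ furnished by Lemma~\ref{lem:DEfunc-estim}, that sidesteps the winding. Once this is in place, the remaining points, namely that $F$ is analytic on $\domD_d$ because $1+\rme^{\pi\sinh\zeta}\ne 0$ there for $d<\pi/2$, and that the case $\alpha>\beta$ is handled identically, are routine; combining the membership with Theorems~\ref{thm:DE-Sinc-Quad-overall} and~\ref{thm:DE-Sinc-Indef-overall} then yields Theorems~\ref{thm:DE3-Sinc-Quad-explicit} and~\ref{thm:DE3-Sinc-Indef}.
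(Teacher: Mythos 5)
Your proposal is correct, reaches exactly the stated constants, and your reduction step coincides with the paper's: the paper's Lemma~\ref{lem:log-exp} isolates precisely your quantity $G(\zeta)=(1+z)(1-\rme^{-z})/z$ with $z=\log(1+\rme^{\pi\sinh\zeta})$, and the factorization of $|F(\zeta)|$ into the $\LC^{\textDEg}$-weight at the doubled indices $(2\alpha,2\beta)$ times $2K|G(\zeta)|^{1-\alpha}$ is the same bookkeeping. Where you genuinely depart from the paper is the proof of the strip bound $|G|\leq\tilde{c}_{d}$. The paper invokes the maximum modulus principle, passes to the boundary $\zeta=x+\imnum d$, and splits on the size of $\xi=z$: for $|\xi|\leq\log(2+\rme^{\gamma})$, where $\gamma=-\log\cos(\frac{\pi}{2}\sin d)$ so that $2+\rme^{\gamma}=1+c_d$, it bounds $|p(\xi)|=|(1+\xi)\sum_{k\geq1}(-\xi)^{k-1}/k!|\leq(1+|\xi|)(\rme^{|\xi|}-1)/|\xi|$ and uses the monotone increase of $(1+x)(\rme^{x}-1)/x$; for $|\xi|>\log(2+\rme^{\gamma})$ it uses $|1-\rme^{-\xi}|\leq 1+\rme^{-\Re\xi}\leq 1+\rme^{\gamma}$ together with the monotone decrease of $(1+x)/x$. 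Your decomposition $G=(1-\rme^{-z})+(1-\rme^{-z})/z$ with the integral representation $(1-\rme^{-z})/z=\int_0^1\rme^{-tz}\diff t$ replaces all of this by a pointwise estimate valid throughout the open strip --- no continuity up to the boundary and no maximum principle are needed --- while consuming exactly the same input, namely the lower bound $\Re z\geq\log\cos(\frac{\pi}{2}\sin d)$ furnished by Lemma~\ref{lem:DEfunc-estim}. It is a pleasant feature that after you deliberately relax $\Re z\geq-\log(c_d-1)$ to $\Re z\geq-\log(1+c_d)$, the two summands recombine to exactly $\tilde{c}_{d}=c_d+c_d/\log(1+c_d)$; with the sharper bound one would get $c_d+(c_d-2)/\log(c_d-1)$, which is smaller, so your route is not only more elementary but would even permit a slightly better constant. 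On the real line your approach is the paper's (single-variable calculus on $p(v)=(1+v)(1-\rme^{-v})/v$), but note the margin is razor-thin: the maximum is $1+\lambda\rme^{-\lambda}\approx 1.2975$ versus $\rme^{\pi/12}\approx 1.2993$, so the step you leave as ``check the maximal value'' requires the paper's device --- at the critical point $1+\lambda+\lambda^2=\rme^{\lambda}$ one gets $p(\lambda)=1+\lambda\rme^{-\lambda}<1+(\log 6)\rme^{-\log 6}$ from $\log 6<\lambda<\log 7$ --- or comparably careful numerics; that sketched verification is the only soft spot in an otherwise complete argument.
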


For the proof,
we need the next estimate.

\begin{lemma}
\label{lem:log-exp}
Let $d$ be a constant with $0<d<\pi/2$. Then,
\begin{align}
 \sup_{\zeta\in\overline{\domD_d}}
\left|
\frac{1+\log(1+\rme^{\pi\sinh\zeta})}{\log(1+\rme^{\pi\sinh\zeta})}
\cdot\frac{1}{1+\rme^{-\pi\sinh\zeta}}
\right|
&\leq \tilde{c}_{d},\label{ineq:log-exp-complex}\\
\sup_{x\in\mathbb{R}}\left\{
\frac{1+\log(1+\rme^{\pi\sinh x})}{\log(1+\rme^{\pi\sinh x})}
\cdot\frac{1}{1+\rme^{-\pi\sinh x}}
\right\}
&\leq \rme^{\pi/12},\label{ineq:log-exp-real}
\end{align}
where $\tilde{c}_d$ is a constant defined in~\eqref{tilde-c-alpha-d}.
\end{lemma}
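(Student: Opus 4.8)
The plan is to handle both inequalities through the single substitution $w=\rme^{\pi\sinh\zeta}$ and $u=\log(1+w)$. Writing $\Phi(\zeta)$ for the function whose modulus appears in~\eqref{ineq:log-exp-complex}, this substitution turns it into
\[
\Phi=\frac{1+u}{u}\bigl(1-\rme^{-u}\bigr)=\Bigl(1+\frac1u\Bigr)\frac{w}{1+w},
\]
whose second factor is exactly $1/(1+\rme^{-\pi\sinh\zeta})$. The real bound~\eqref{ineq:log-exp-real} is essentially sharp and must be obtained by one–variable calculus, whereas the complex bound~\eqref{ineq:log-exp-complex} is comfortably slack and is obtained by peeling off that second factor and controlling it by $c_d$.

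For~\eqref{ineq:log-exp-real}, as $x$ runs over $\mathbb{R}$ the quantity $u=\log(1+\rme^{\pi\sinh x})$ increases bijectively onto $(0,\infty)$, so it suffices to bound $\psi(u)=(1+1/u)(1-\rme^{-u})$ on $(0,\infty)$. First I would differentiate and simplify the stationarity condition $\psi'(u)=0$ to the clean equation $\rme^{u}=1+u+u^{2}$; inspecting $F(u)=\rme^{u}-1-u-u^{2}$ (negative then positive) shows there is a unique critical point $u^{*}$, which is the global maximum. Using $\rme^{-u^{*}}=1/(1+u^{*}+u^{*2})$ collapses the maximum to the closed form $\psi(u^{*})=(1+u^{*})^{2}\rme^{-u^{*}}$. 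Since $(1+u)^{2}\rme^{-u}$ is decreasing for $u>1$, it remains only to bound $u^{*}$ from below: verifying $\rme^{1.793}<1+1.793+1.793^{2}$ gives $u^{*}>1.793$, whence $\psi(u^{*})<(1+1.793)^{2}\rme^{-1.793}<\rme^{\pi/12}$.

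For~\eqref{ineq:log-exp-complex}, set $s=1/(1+\rme^{-\pi\sinh\zeta})=w/(1+w)$. Applying $|1+z|\le 1+|z|$ to the first factor gives $|\Phi|\le(1+1/|u|)\,|s|$. The crucial step is the inequality
\[
|\log(1+w)|\ge\log\bigl(1+|s|\bigr),
\]
after which $|\Phi|\le|s|\{1+1/\log(1+|s|)\}=q(|s|)$, where $q(v)=v\{1+1/\log(1+v)\}$ is increasing because $\log(1+v)>v/(1+v)$. Finally $|s|=|1-1/(1+w)|\le 1+|1/(1+\rme^{\pi\sinh\zeta})|\le 1+1/\cos(\tfrac{\pi}{2}\sin d)=c_d$ by Lemma~\ref{lem:DEfunc-estim}, so monotonicity of $q$ yields $|\Phi|\le q(c_d)=\tilde{c}_d$, which is exactly~\eqref{tilde-c-alpha-d}.

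The main obstacle is the crucial inequality $|\log(1+w)|\ge\log(1+|s|)$. Writing $\log(1+w)=\rho+\imnum\theta$, it is equivalent to the two–variable estimate $\sqrt{\rho^{2}+\theta^{2}}\ge\log(1+\sqrt{1-2\rme^{-\rho}\cos\theta+\rme^{-2\rho}})$. I expect to prove it by showing the worst case is $\theta=0$: there it reduces to $2\cosh\rho\ge2$ for $\rho>0$ and to an identity for $\rho\le0$ (this is precisely the equality locus $w\in(-1,0]$), while a second–order expansion in $\theta$ about $\theta=0$ shows the left side grows faster than the right, since $\rme^{|\rho|}-1>|\rho|$. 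Making this monotonicity in $|\theta|$ rigorous over the admissible range — together with the care needed for the branch of $\log(1+w)$ where $1+w$ meets the negative real axis — is the delicate part; the remaining items (analyticity of $\Phi$ and the reductions above) are routine.
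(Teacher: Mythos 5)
Your architecture is sound, and it splits naturally into two parts against the paper. For the real bound \eqref{ineq:log-exp-real} your argument is essentially the paper's own: the paper also reduces to $p(t)=\frac{1+t}{t}(1-\rme^{-t})$ on $(0,\infty)$, locates the critical point $\lambda\in(\log 6,\log 7)$ via the stationarity identity $\rme^{\lambda}=1+\lambda+\lambda^{2}$, uses that identity to collapse the maximum to $p(\lambda)=1+\lambda\rme^{-\lambda}$ (algebraically the same as your $(1+\lambda)^{2}\rme^{-\lambda}$), and finishes by monotonicity, getting $p(\lambda)<1+(\log 6)/6<\rme^{\pi/12}$. Your variant with the lower bound $u^{*}>1.793$ is correct but razor-thin: $u^{*}\approx 1.7933$ and $(2.793)^{2}\rme^{-1.793}\approx 1.2985$ versus $\rme^{\pi/12}\approx 1.2993$, so the decimal verifications must be carried to four or five places (the paper's choice $\lambda>\log 6$ is comparably tight). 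For the complex bound \eqref{ineq:log-exp-complex} your route is genuinely different: the paper invokes the maximum modulus principle and then splits into the cases $|\xi|\le\log(1+c_d)$ and $|\xi|>\log(1+c_d)$, using the power-series estimate $|1-\rme^{-\xi}|\le\rme^{|\xi|}-1$ in the first and $|1-\rme^{-\xi}|\le 1+\rme^{-\Re\xi}\le c_d$ (via the lower bound $\Re\xi\ge\log\cos(\frac{\pi}{2}\sin d)$ from Lemma~\ref{lem:DEfunc-estim}) in the second, each case landing exactly on $\tilde{c}_d$. Your single monotone bound $|\Phi|\le q(|s|)\le q(c_d)=\tilde{c}_d$, with $q$ increasing because $\log(1+v)>v/(1+v)$ and $|s|\le c_d$ pointwise from Lemma~\ref{lem:DEfunc-estim}, eliminates both the case split and the appeal to the maximum modulus principle; that is a real simplification.

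However, as written the proposal has a gap at exactly the step you flag: the ``crucial inequality'' $|\log(1+w)|\ge\log(1+|s|)$ is only sketched, and the route you propose (reduction to $\theta=0$, a second-order expansion, and an unproven monotonicity in $|\theta|$) is not a proof and would be laborious to complete. The inequality is in fact true for \emph{every} complex $u$ with $\rme^{-u}=1/(1+w)$, by a one-line series argument: since $s=1-\rme^{-u}=\sum_{k\ge 1}(-1)^{k+1}u^{k}/k!$, the triangle inequality gives $|s|\le\sum_{k\ge 1}|u|^{k}/k!=\rme^{|u|}-1$, i.e.\ $1+|s|\le\rme^{|u|}$, which is your claim. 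This also disposes of your branch worry entirely: whichever branch value the analytic continuation of $\DEt{3$\ddagger$}$ assigns to $u=\log(1+w)$, the estimate holds pointwise (and $u\ne 0$ since $w=\rme^{\pi\sinh\zeta}\ne 0$). Note this is precisely the series estimate the paper uses in its small-$|\xi|$ case, so your argument can be seen as promoting the paper's case (a) trick into a global reduction that makes case (b) unnecessary. With that substitution your proof is complete and correct.
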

\begin{proof}
First, consider~\eqref{ineq:log-exp-real},
which is proved by showing
\[
p(t)=\frac{1+t}{t}(1-\rme^{-t})\leq 1+\frac{\log 6}{6}
\,\,\,\left(<\rme^{\pi/12}\right)
\]
for $t\geq 0$ (put $t=\log(1+\rme^{\pi\sinh x})$).
Let $\lambda$ be a value with $\log 6<\lambda<\log 7$
so that $p'(\lambda)=(1+\lambda+\lambda^2-\rme^{\lambda})/(\rme^{\lambda}\lambda^2)=0$.
Then, $p(t)\leq p(\lambda)$ clearly holds.
Furthermore, using the relation
$1+\lambda+\lambda^2=\rme^{\lambda}$, we have
\[
 p(\lambda)
=\frac{1+\lambda}{\lambda}\left(\frac{\rme^{\lambda}-1}{\rme^{\lambda}}\right)
=\frac{1+\lambda}{\lambda}\left(\frac{\lambda(1+\lambda)}{\rme^{\lambda}}\right)
=\frac{(1+\lambda+\lambda^2)+\lambda}{\rme^{\lambda}}
=1+\lambda\rme^{-\lambda}.
\]
Because the function $1+x\rme^{-x}$ is monotonically decreasing
for $x\geq 1$,
$1+\lambda\rme^{-\lambda}< 1+(\log 6)\rme^{-\log 6}$ holds
(note that $\log 6<\lambda$).
Hence,~\eqref{ineq:log-exp-real} is proved.

Next, consider~\eqref{ineq:log-exp-complex}.
By the maximum modulus principle, it is proved by showing
\begin{equation}
|p(\xi)|\leq \frac{1+\log(2+\rme^{\gamma})}{\log(2+\rme^{\gamma})}
(1+\rme^{\gamma})=\tilde{c}_d,
\label{leq:target-log-exp-complex}
\end{equation}
where $\xi=\log(1+\rme^{\pi\sinh(x+\imnum d)})$
and $\gamma=-\log[\cos(\frac{\pi}{2}\sin d)]$.
Here, notice that
\[
 \Re\xi
=\log|1+\rme^{\pi\sinh(x+\imnum d)}|
\geq \log\left[
(1+\rme^{\pi\sinh(x)\cos d})\cos(\textstyle\frac{\pi}{2}\sin d)
\right]
\geq -\gamma
\]
holds from Lemma~\ref{lem:DEfunc-estim}.
Let us bound $|p(\xi)|$
in the two cases: a) $|\xi|\leq\log(2+\rme^{\gamma})$
and b) $|\xi|>\log(2+\rme^{\gamma})$.
For case a), it holds that
\begin{align*}
|p(\xi)|
&=\left|(1+\xi)\sum_{k=1}^{\infty}\frac{(-\xi)^{k-1}}{k!}\right|
\leq (1+|\xi|)\sum_{k=1}^{\infty}\frac{|\xi|^{k-1}}{k!}
=\frac{1+|\xi|}{|\xi|}(\rme^{|\xi|}-1).
\end{align*}
Furthermore, because $(1+x)(\rme^{x}-1)/x$ is monotonically increasing,
we have~\eqref{leq:target-log-exp-complex}.
For case b), because $\Re\xi\geq\gamma$, it holds that
\[
 |p(\xi)|
\leq\frac{1+|\xi|}{|\xi|}
(1+|\rme^{-\xi}|)
=\frac{1+|\xi|}{|\xi|}
(1+\rme^{-\Re\xi})
\leq\frac{1+|\xi|}{|\xi|}
(1+\rme^{\gamma}).
\]
Furthermore, because $(1+x)/x$ is monotonically decreasing,
we have~\eqref{leq:target-log-exp-complex}.
This completes the proof.
\end{proof}

Lemma~\ref{lem:DE3-Sinc-Quad-check}
is proved as follows.

\begin{proof}
From the inequality~\eqref{leq:Sinc-quad-case2-alpha-beta} with $i=3$,
it follows that
\begin{align*}
|F(\zeta)|
&\leq \left|\frac{1+\log(1+\rme^{\pi\sinh\zeta})}{\log(1+\rme^{\pi\sinh\zeta})}\right|^{1-\alpha}
\frac{|\DEtDiv{3$\ddagger$}(\zeta)|}{|1+\rme^{\pi\sinh\zeta}|^{\beta}}\\
&=2\left|
\frac{1+\log(1+\rme^{\pi\sinh\zeta})}{\log(1+\rme^{\pi\sinh\zeta})}
\cdot\frac{1}{1+\rme^{-\pi\sinh\zeta}}
\right|^{1-\alpha}
\frac{K(\pi/2)|\cosh\zeta|}{|1+\rme^{-\pi\sinh\zeta}|^{\alpha}|1+\rme^{\pi\sinh\zeta}|^{\beta}}.
\end{align*}
Then, use Lemma~\ref{lem:log-exp} to obtain the desired result.
\end{proof}

%
%
%
%
%
%


\begin{small}
\bibliographystyle{myamsplain}
\bibliography{2012scan-error-estim.bib}
\end{small}
\end{document}